\documentclass[twoside,11pt]{article}

\usepackage{jmlr2e}

\usepackage{graphicx} 
\usepackage{subfigure}

\usepackage{algorithm}
\usepackage{algorithmic}
\RequirePackage{amsmath}
\RequirePackage{amssymb}

\usepackage[colorlinks=false,allbordercolors={1 1 1}]{hyperref}

\setcounter{topnumber}{4}

\newenvironment{itemizeReduced}{
\begin{list}{\labelitemi}{\leftmargin=2.5em}
\setlength{\itemsep}{3pt}
\setlength{\parskip}{0pt}
\setlength{\parsep}{0pt}}{\end{list}
}

\newcommand{\BEAS}{\begin{eqnarray*}}
\newcommand{\EEAS}{\end{eqnarray*}}
\newcommand{\BEA}{\begin{eqnarray}}
\newcommand{\EEA}{\end{eqnarray}}
\newcommand{\BEQ}{\begin{equation}}
\newcommand{\EEQ}{\end{equation}}
\newcommand{\BIT}{\begin{itemizeReduced}}
\newcommand{\EIT}{\end{itemizeReduced}}
\newcommand{\BNUM}{\begin{enumerate}}
\newcommand{\ENUM}{\end{enumerate}}
\newcommand{\BA}{\begin{array}}
\newcommand{\EA}{\end{array}}

\newcommand{\tr}{\mathop{ \rm tr}}

\newcommand{\rb}{\mathbb{R}}

\newcommand{\mysec}[1]{Section~\ref{sec:#1}}
\newcommand{\myapp}[1]{Appendix~\ref{app:#1}}
\newcommand{\eq}[1]{Equation~(\ref{eq:#1})}

\def \E{{\mathbb E}}
\def \P{{\mathbb P}}

\def \E{{\mathbb E}}
\def \P{{\mathbb P}}
\def \F{{\mathcal F}}
\def \H{{\mathcal H}}

\jmlrheading{15}{2014}{367-399}{10/13; Revised 12/13}{2/14}{Francis Bach}

\ShortHeadings{Adaptivity of Averaged Stochastic Gradient Descent}{Bach}
\firstpageno{367}

\begin{document}

\title{Adaptivity of Averaged Stochastic Gradient Descent \\to Local Strong Convexity for Logistic Regression}

\author{\name Francis Bach \email francis.bach@ens.fr \\
       \addr INRIA - Sierra Project-team\\
       D\'epartement d'Informatique de l'Ecole Normale Sup\'erieure \\
     Paris, France
     }

\editor{L\'eon Bottou}

\maketitle

\begin{abstract}
 In this paper, we consider supervised learning problems such as logistic regression and study the stochastic gradient method with averaging, in the usual stochastic approximation setting where observations are used only once. We show that after $N$ iterations,   with a constant step-size proportional to $1/R^2 \sqrt{N}$ where $N$ is the number of observations and $R$ is the maximum norm of the observations,   the convergence rate is always of order $O(1/\sqrt{N})$, and improves to $O(R^2 / \mu N)$ where $\mu$ is the lowest eigenvalue of the Hessian at the global optimum (when this eigenvalue is greater than $R^2/\sqrt{N}$). Since $\mu$ does not need to be known in advance, this shows that averaged stochastic gradient is adaptive to \emph{unknown  local} strong convexity of the objective function. Our proof relies on the generalized self-concordance properties of the logistic loss and thus extends to all generalized linear models with uniformly bounded features.

 \end{abstract}

\begin{keywords}
stochastic approximation, logistic regression, self-concordance
 \end{keywords}

\section{Introduction}

The minimization of an objective function which is only available through unbiased estimates of the function values or its gradients is a key methodological  problem in many disciplines. Its analysis has been attacked mainly in three scientific communities: stochastic approximation \citep{fabian1968asymptotic,ruppert,polyak1992acceleration,kushner:yin:2003,broadie2009general},
optimization \citep{nesterov2008confidence,nemirovski2009robust}, and  machine learning \citep{bottou2005line,shalev2007pegasos,bottou-bousquet-2008b,shalev2008svm,shalev2009stochastic,duchi,xiao2010dual}. The main algorithms which have emerged are stochastic gradient descent (a.k.a.~Robbins-Monro algorithm), as well as a simple modification where iterates are averaged (a.k.a.~Polyak-Ruppert averaging). 

For convex optimization problems, the convergence rates of these algorithms depends primarily on the potential \emph{strong convexity} of the objective function \citep{nemirovsky1983problem}. For $\mu$-strongly convex functions, after $n$ iterations (i.e., $n$ observations), the optimal rate of convergence of function values is $O(1/  \mu n)$ while for convex functions the optimal rate is $O(1/\sqrt{n})$, both of them achieved by averaged stochastic gradient with step size respectively proportional to $1/ \mu n$ or $1/\sqrt{n}$ \citep{nemirovsky1983problem,agarwal2010information}. For smooth functions, averaged stochastic gradient with step sizes proportional to $1/\sqrt{n}$ achieves them up to logarithmic terms \citep{gradsto}.

Convex optimization problems coming from supervised machine learning are typically of the form $f(\theta) = \E \big[ \ell ( y , \langle \theta, x \rangle) \big]$,
where $\ell ( y , \langle \theta, x \rangle)$ is the loss between the response $y \in \rb $ and the prediction $\langle \theta, x \rangle \in \rb$, where $x$ is the input data   in a Hilbert space $\H$ and linear predictions  parameterized by $\theta \in \H$ are considered. They may or may not have strongly convex objective functions. This most often depends on (a) the correlations between covariates $x$, and (b) the strong convexity of the loss function $\ell$. The logistic loss  $\ell: u \mapsto \log (1 + e^{-u})$ is not strongly convex unless restricted to a compact set (indeed, restricted to $u \in [-U,U]$, we have
$\ell''(u) = e^{-u} ( 1 + e^{-u})^{-2} \geqslant \frac{1}{4}e^{-U} $).
Moreover, in the sequential observation model, the correlations are not known at training time. Therefore, many theoretical results based on strong convexity do not apply (adding a squared norm $\frac{\mu}{2} \| \theta\|^2$ is a possibility, however, in order to avoid adding too much bias, $\mu$ has to be small and typically much smaller than $ {1}/{\sqrt{n}}$, which then makes all strongly-convex bounds vacuous). The goal of this paper is to show that with   proper assumptions, namely self-concordance, one can readily obtain favorable theoretical guarantees for logistic regression, namely a rate of the form $O(R^2 / \mu n)$ where $\mu$ is the lowest eigenvalue of the Hessian at the global optimum, \emph{without any exponentially increasing constant factor} (e.g.,  with the notations above, without terms of the form $e^{U}$).

Another goal of this paper is to design an algorithm and provide an analysis that benefit from \emph{hidden} local strong convexity without requiring to know the local strong convexity constant in advance. In smooth situations, the results of \citet{gradsto} imply that the averaged stochastic gradient method with step sizes of the form $O(1/\sqrt{n})$ is adaptive to the strong convexity of the problem. However the dependence in $\mu$ in the strongly convex case is of the form $O( 1/\mu^{2} n)$, which is sub-optimal. Moreover, the final rate is rather complicated, notably because all possible step-sizes are considered. Finally, it does not apply here because even in low-correlation settings, the objective function of logistic regression cannot be globally strongly convex.

In this paper, we provide an analysis for stochastic gradient with averaging for generalized linear models such as logistic regression, with a step size proportional to $1/ R^2 \sqrt{n}$ where $R$ is the radius of the data and $n$ the number of observations, showing such adaptivity. In particular, we show that the algorithm can adapt to the \emph{local} strong-convexity constant, that is, the lowest eigenvalue of the Hessian at the optimum. The analysis is done for a finite horizon $N$ and a constant step size decreasing in~$N$ as $1/R^2 \sqrt{N}$, since the analysis is then slightly easier, though (a) a decaying stepsize could be considered as well, and (b) it could be classically extended to varying step-sizes by a doubling trick \citep{hazanbeyond}.

\section{Stochastic Approximation for Generalized Linear Models}
In this section, we present the assumptions our work relies on, as well as related work.

\subsection{Assumptions}
Throughout this paper, we make the following assumptions.
We consider  a function $f$ defined on a Hilbert space $\H$, equipped with a norm $\| \cdot \|$. Throughout the paper, we identify the Hilbert space and its dual; thus, the gradients of $f$ also belongs to $\H$ and we use the same norm on these. Moreover, we consider
an increasing family of $\sigma$-fields $(\mathcal{F}_n)_{n \geqslant 1}$ and we assume that we are given a deterministic
$\theta_0 \in \H$, and a sequence of functions $f_n: \H \to \rb$, for $n\geqslant 1$. We  make the following assumptions, for a certain $R>0$:
\BIT
\item[\textbf{(A1)}] \textbf{Convexity and differentiability of $f$}: $f$ is convex and three-times differentiable.
\item[\textbf{(A2)}] \textbf{Generalized self-concordance of $f$} \citep{bach2010self}:  for all $\theta_1,\theta_2 \in \H$, the function $\varphi: t \mapsto f\big[\theta_1 + t (\theta_2 - \theta_1) \big]$ satisfies: $\forall t \in \rb$, $| \varphi'''(t) |\leqslant R \| \theta_1 - \theta_2 \| \varphi''(t)$.
\item[\textbf{(A3)}] \textbf{Attained global minimum}: $f$ has a global minimum attained at $\theta_\ast \in \H$.
\item[\textbf{(A4)}] \textbf{Lipschitz-continuity of $f_n$ and $f$}:  all gradients of $f$ and $f_n$ are bounded by $R$, that is, 
for all $\theta\in \H$,
$$\| f'(\theta)  \|
\leqslant R \mbox{ and } \forall n \geqslant 1, \ \| f_n'(\theta) \|
\leqslant R \mbox{ almost surely}.$$

\item[\textbf{(A5)}] \textbf{Adapted measurability}: $\forall n \geqslant 1$, $f_n$ is $\mathcal{F}_n$-measurable.
\item[\textbf{(A6)}] \textbf{Unbiased gradients}: $\forall n \geqslant 1$, $\E( f_n'(\theta_{n-1})  | \F_{n-1} ) = f'(\theta_{n-1})$.
\item[\textbf{(A7)}] \textbf{Stochastic gradient recursion}: $\forall n \geqslant 1$, $\theta_n = \theta_{n-1} - \gamma_n f_n'(\theta_{n-1})$, where $(\gamma_n)_{n \geqslant 1}$ is a deterministic sequence.\EIT
In this paper, we will also consider the averaged iterate $\bar{\theta}_n = \frac{1}{n} \sum_{k=0}^{n-1} \theta_k$, which may be trivially computed on-line through the recursion $\bar{\theta}_n = \frac{1}{n} \theta_{n-1} + \frac{n-1}{n} \bar{\theta}_{n-1}$.

Among the seven assumptions above, the non-standard one is \textbf{(A2)}: the notion of self-concordance is an important tool in convex optimization and in particular for the study of Newton's method  \citep{self}. It corresponds to having the third derivative bounded by the $\frac{3}{2}$-th power of the second derivative. For machine learning, \citet{bach2010self} has generalized  the notion of self-concordance by removing the  $\frac{3}{2}$-th power, so that it is applicable to cost functions arising from probabilistic modeling, as shown below. The key consequence of our notion of self-concordance is a relationship shown in Lemma~\ref{prop:selfc} (\mysec{strong}) between the norm of a gradient $\| f'(\theta)\|$ and the excess cost function $f(\theta) -f(\theta_\ast)$, which is the same than for strongly convex functions, but with the local strong convexity constant rather than the global one (which is equal to zero here).

         Our set of assumptions corresponds to the following examples (with i.i.d.~data, and $\mathcal{F}_n$ equal to the $\sigma$-field generated by $x_1,y_1,\dots,x_n,y_n$):
      \BIT
           \item[--] \textbf{Logistic regression}: $f_n(\theta) =  \log( 1+ \exp(- y_n \langle x_n , \theta \rangle  ))$,  with data $x_n$  uniformly almost surely bounded by $R$ and $y_n \in \{-1,1\}$.
           The norm considered here is also the norm of the Hilbert space.
             Note that this includes other binary classification losses, such as $f_n(\theta) =   - y_n \langle x_n , \theta \rangle  + \sqrt{ 1
           +  \langle x_n , \theta \rangle^2}$.

      \item[--] \textbf{Generalized linear models with  uniformly bounded features}: $f_n(\theta) =$ \linebreak[4] $- \langle \theta, \Phi(x_n,y_n) \rangle
      + \log \int h(y) \exp \big( \langle \theta, \Phi(x_n,y) \rangle \big) dy$, with  $\Phi(x_n,y) \in \H$ almost surely bounded in norm by $R$, for all observations $x_n$ and all potential responses $y$ in a measurable space. This includes multinomial regression and conditional random fields \citep{lafferty2001conditional}.  
\item[--] \textbf{Robust regression}: we may use $f_n(\theta) = \varphi( y_n - \langle x_n , \theta \rangle  )$, with $\varphi(t) = \log \cosh t = \log \frac{e^{t} + e^{-t} }{2}$, with a similar boundedness assumption on $x_n$.
      
      \EIT

      \subsection{Running-time Complexity} 
      
      The stochastic gradient descent recursion $\theta_n = \theta_{n-1} - \gamma_n f_n'(\theta_{n-1})$ operates in full generality in the potentially infinite-dimensional Hilbert space $\H$. There are two practical set-ups where this recursion can be implemented. When $\H$ is finite-dimensional with dimension $d$, then the complexity of a single iteration is $O(d)$, and thus $O(dn)$ after $n$ iterations. When $\H$ is infinite-dimensional,  the recursion can be readily implemented when (a) all functions $f_n$ depend on one-dimensional projections $\langle x_n, \theta \rangle$, that is, are of the form $f_n(\theta) = \varphi_n \big( \langle x_n, \theta \rangle \big)$ for certain random functions $\varphi_n$ (e.g., $\varphi_n(u) = \ell(y_n,u)$ in machine learning), and (b)
       all scalar products $K_{ij} = \langle x_i, x_j \rangle$ between $x_i$ and $x_j$, for $i, j \geqslant 1$, can be computed. This may be done through the classical application of the ``kernel trick'' \citep{scholkopf-smola-book,Cristianini2004}: if $\theta_0=0$, we may represent $\theta_n$ as a linear combination of vectors $x_1,\dots,x_n$, that is, $\theta_n = \sum_{i=1}^n 
      \alpha_{i} x_i$, and the recursion may be written in terms of the weights $\alpha_n$, through
    $$\alpha_{n} = - \gamma_n x_n \varphi_n' \bigg( \sum_{i=1}^{n-1} \alpha_i K_{ni} \bigg) .$$
      A key element to notice here is that without regularization, the weights $\alpha_i$ corresponding to previous observations remain constant. The overall complexity of the algorithm is $O(n^2)$ times the cost of evaluating a single kernel function.
See \cite{bordes2005fast} and \citet{wang2012breaking} for approaches aiming at reducing the computational load in this setting. Finally, note that in the kernel setting, the function $f(\theta)$ cannot be strongly convex because the covariance operator of $x$ is typically a compact operator, with a sequence of eigenvalues tending to zero (some regularization is then needed).

 \section{Related Work}
       
       In this section, we review related work, first for non-strongly convex problems then for strongly convex problems.
  
\subsection{Non-strongly-convex Functions} 
When only convexity of the objective function is assumed,  several authors \citep{nesterov2008confidence,nemirovski2009robust,shalev2009stochastic,xiao2010dual} have shown that using a step-size proportional to $1/\sqrt{n}$, \emph{together with some form of averaging}, leads to the minimax optimal rate of $O(1/\sqrt{n})$ \citep{nemirovsky1983problem,agarwal2010information}. Without averaging, the known convergences rates are suboptimal, that is, averaging is key to obtaining the optimal rate \citep{gradsto}. Note that the smoothness of the loss does not change the rate, but may help to obtain better constants, with the potential use of acceleration \citep{lan2010optimal}. Recent work \citep{newsto} has considered algorithms which improve on the rate $O(1/\sqrt{n})$ for smooth self-concordant losses, such as the square and logistic losses. Their analysis relies on some of the results proved in this paper (in particular the high-order bounds in \mysec{nonstronglycvx}).

The compactness of the domain is often used within the algorithm (by using orthogonal projections) and within the analysis (in particular to optimize the step size and obtain high-probability bounds). In this paper, we do not make such compactness assumptions, since in a machine learning context, the available bound would be loose and hurt  practical performance. Note that the analysis of the related dual averaging methods \citep{nesterov2009primal,xiao2010dual} has also been carried without compactness assumptions, and previous analyses would also go through in the same set-up for stochastic mirror descent \citep{nemirovsky1983problem}, at least for bounds in expectation. In the present paper, we  derive higher-order bounds and bounds in high-probability where the lack of compactness is harder to deal with.

Another difference between several analyses is the use of decaying step sizes of the form $\gamma_n \propto 1/\sqrt{n}$ vs.~the use of a constant step size of the form $\gamma \propto 1 / \sqrt{N}$ for a finite known horizon $N$ of iterations. The use of a ``doubling trick'' as done by \citet{hazanbeyond} for strongly convex optimization, where a constant step size is used for iterations between $2^p$ and $2^{p+1}$, with a constant that is proportional to $1/\sqrt{2^p}$, would allow to obtain an anytime algorithm from a finite horizon one. In order to simplify our analysis, we only consider a finite horizon $N$ and a constant step-size that will be proportional to $1/\sqrt{N}$.  

\subsection{Strongly-convex Functions} 

When the function is $\mu$-strongly convex, that is, $\theta \mapsto f(\theta) - \frac{\mu}{2} \| \theta\|^2$ is convex, there are essentially two approaches to obtaining the minimax-optimal rate of  $O(1/ \mu n)$ \citep{nemirovsky1983problem,agarwal2010information}: (a) using a step size proportional to $1/\mu n$ with averaging for non-smooth problems \citep{nesterov2008confidence,nemirovski2009robust,xiao2010dual,shalev2009stochastic,duchi,lacoste2012simpler} or a step size proportional to $1/(R^2 + n \mu)$  also with averaging, for smooth problems, where $R^2$ is the smoothness constant of the loss of a single observation \citep{sag}; (b) for smooth problems, using longer step-sizes proportional to $1/n^\alpha$ for $\alpha \in (1/2,1)$ \emph{with} averaging \citep{polyak1992acceleration,ruppert,gradsto}.

Note that the often advocated step size, that is, of the form $C/n$ where $C$ is larger than $1/\mu$, leads, without averaging to a convergence rate of $O(1/\mu^2 n)$ \citep{fabian1968asymptotic,gradsto}, hence with  a worse dependence on $\mu$.

 The   solution (a) requires to have a good estimate of the strong-convexity constant $\mu$, while the second solution (b) does not require to know such estimate and leads to a convergence rate achieving asymptotically the Cramer-Rao lower bound \citep{polyak1992acceleration}. Thus, this last solution is adaptive to unknown (but positive) amount of strong convexity. However, unless we take the limiting setting $\alpha=1/2$, it is not adaptive to lack of strong convexity. While the non-asymptotic analysis of \citet{gradsto} already gives a convergence rate in that situation, the bound is rather complicated and also has a suboptimal dependence on $\mu$. Another goal of this paper is to consider a less general result, but   more compact and, as already mentioned, a better dependence on the strong convexity constant $\mu$ (moreover, as reviewed below, we consider the \emph{local} strong convexity constant, which is much larger).
 
Finally, note that unless we restrict the support, the objective function for logistic regression cannot be globally strongly convex (since the Hessian tends to zero when $\|\theta\|$ tends to infinity). In this paper we show that stochastic gradient descent with averaging is adaptive to the \emph{local} strong convexity constant, that is, the lowest eigenvalue of the Hessian of $f$ at the global optimum, without any exponential terms in $R D $ (which would be present if a compact domain of diameter $D$ was imposed and traditional analyses were performed).

 \subsection{Adaptivity to Unknown Constants} 
 
 The desirable property of adaptivity to the difficulty of an optimization problem has also been studied in several settings. Gradient descent with constant step size is for example naturally adaptive to the strong convexity of the problem \citep[see, e.g.,][]{nesterov2004introductory}. In the stochastic context, \citet{juditsky2010primal} provide another strategy than averaging with longer step sizes, but for uniform convexity constants.

\section{Non-Strongly Convex Analysis}
\label{sec:nonstronglycvx}
In this section, we study the averaged stochastic gradient method in the non-strongly convex case, that is, without any (global or local) strong convexity assumptions. We first recall existing results in \mysec{existing}, that bound the expectation of the excess risk leading to a bound in $O(1/\sqrt{N})$. We then show using martingale moment inequalities how all higher-order moments may be bounded in \mysec{high}, still with a rate of $O(1/\sqrt{N})$. However, in \mysec{grad}, we consider the convergence of the squared gradient, with now a rate of $O(1/N)$. This last result is key to obtaining the adaptivity to local strong convexity in \mysec{self}.

\subsection{Existing Results}
\label{sec:existing}
In this section, we review existing results for Lipschitz-continuous non-strongly convex problems \citep{nesterov2008confidence,nemirovski2009robust,shalev2009stochastic,duchi,xiao2010dual}. Note that smoothness is not needed here. We consider a constant step size $\gamma_n = \gamma > 0$, for all $n \geqslant 1$, and we denote by $\bar{\theta}_n = \frac{1}{n} \sum_{k=0}^{n-1} \theta_k$ the averaged iterate.

We prove the following proposition, which provides a bound on the expectation of $f(\bar{\theta}_{n}) - f(\theta_\ast)$ that decays at rate $O(\gamma + 1/ \gamma n)$, hence the usual choice $ \gamma \propto 1/\sqrt{n}$:
\begin{lemma}
\label{prop:old}
Assume \textbf{(A1)} and \textbf{(A3-7)}.
With constant step size equal to $\gamma$, for any $n \geqslant 0$, we have:
$$
\E f\bigg( \frac{1}{n} \sum_{k=1}^n \theta_{k-1} \bigg) - f(\theta_\ast) 
 + \frac{1}{2\gamma n} \E \| \theta_{n }- \theta_\ast \|^2
\leqslant   \frac{1}{2\gamma n} \| \theta_{0 }- \theta_\ast \|^2
+ \frac{\gamma}{2}R^2.
$$
\end{lemma}
\begin{proof}
      We have the following recursion, obtained from the Lipschitz-continuity of $f_n$:
      \BEAS
      \| \theta_n  - \theta_\ast \|^2 & =  & \| \theta_{n-1}  - \theta_\ast \|^2  - 2 \gamma  \langle \theta_{n-1} - \theta_\ast ,  f_n'(\theta_{n-1}) \rangle + \gamma^2  \|  f_n'(\theta_{n-1})\|^2 \\
  & \leqslant &  \| \theta_{n-1}  - \theta_\ast \|^2  - 2 \gamma    \langle \theta_{n-1} - \theta_\ast ,  f'(\theta_{n-1}) \rangle +   \gamma^2 R^2 + M_n,
     \EEAS
      with 
      $$
      M_n =  -  2 \gamma  \langle
 \theta_{n-1} - \theta_\ast ,  f_n'(\theta_{n-1}) -  f'(\theta_{n-1}) \rangle.$$

 We thus get, using the classical result from convexity $f(\theta_{n-1}) - f(\theta_\ast)  \leqslant \langle \theta_{n-1} - \theta_\ast ,  f'(\theta_{n-1}) \rangle$:
 \BEQ
 \label{eq:mn}
 2\gamma  \big[ f(\theta_{n-1}) - f(\theta_\ast) \big]
   \leqslant   \| \theta_{n-1}  - \theta_\ast \|^2 - \| \theta_n  - \theta_\ast \|^2   +   \gamma^2 R^2 + M_n.
 \EEQ
Summing over integers less than $n$, this implies:
\BEAS
\frac{1}{n}\sum_{k=0}^{n-1} f(\theta_k) - f(\theta_\ast) + \frac{1}{2\gamma n} \| \theta_{n }- \theta_\ast \|^2
& \leqslant & \frac{1}{2\gamma n} \| \theta_{0 }- \theta_\ast \|^2
+ \frac{\gamma}{2}R^2 + \frac{1}{2 \gamma n} \sum_{k=1}^n M_k.
\EEAS
We get the desired result by taking expectation in the last inequality, and using
the expectation $\E M_k = \E ( \E( M_k | \F_{k-1})) = 0$ and $f\big( \frac{1}{n} \sum_{k=0}^{n-1} \theta_k \big) \leqslant \frac{1}{n} \sum_{k=0}^{n-1} f(\theta_k)$.
\end{proof}
The following corollary considers a specific choice of the step size (note that the bound is only true for the last iterate):
\begin{corollary}
Assume \textbf{(A1)} and \textbf{(A3-7)}.
With constant step size equal to $\gamma = \frac{1}{2R^2\sqrt{N}}$, we have:
\BEAS
\forall n \in \{1,\dots,N\},  \ \  \E \| \theta_{n }- \theta_\ast \|^2
& \leqslant  &  \| \theta_{0 }- \theta_\ast \|^2
+ \frac{1}{4R^2} , \\
  \E f\bigg( \frac{1}{N} \sum_{k=1}^N \theta_{k-1} \bigg) - f(\theta_\ast) 
& \leqslant &  \frac{R^2}{\sqrt{N} } \| \theta_{0 }- \theta_\ast \|^2
+ \frac{1}{4 \sqrt{N}} .
\EEAS
\end{corollary}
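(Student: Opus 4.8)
The plan is to derive the corollary as a direct specialization of Proposition~\ref{prop:old} to the constant step size $\gamma = \frac{1}{2R^2\sqrt{N}}$, using nothing beyond elementary arithmetic together with the fact that $\theta_\ast$ is a global minimizer of $f$ (Assumption \textbf{(A3)}), so that $\E f(\bar\theta_n) - f(\theta_\ast)\geqslant 0$.

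First I would prove the bound on $\E\|\theta_n-\theta_\ast\|^2$. Apply Proposition~\ref{prop:old} with this step size and with a generic integer $n$ in place of its ``$n$'': the proposition holds for \emph{every} $n$ and \emph{every} constant step size, so there is no obstruction in tuning $\gamma$ to the horizon $N$ while letting $n$ range over $\{1,\dots,N\}$. Since the term $\E f\big(\frac{1}{n}\sum_{k=1}^n\theta_{k-1}\big) - f(\theta_\ast)$ on the left-hand side is nonnegative, drop it, leaving $\frac{1}{2\gamma n}\E\|\theta_n-\theta_\ast\|^2 \leqslant \frac{1}{2\gamma n}\|\theta_0-\theta_\ast\|^2 + \frac{\gamma}{2}R^2$, i.e.\ $\E\|\theta_n-\theta_\ast\|^2 \leqslant \|\theta_0-\theta_\ast\|^2 + \gamma^2 n R^2$. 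Substituting $\gamma = \frac{1}{2R^2\sqrt{N}}$ gives $\gamma^2 n R^2 = \frac{n}{4R^2 N} \leqslant \frac{1}{4R^2}$ for every $n\leqslant N$, which is the first claimed inequality.

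Next I would specialize Proposition~\ref{prop:old} to $n=N$, this time keeping the excess-risk term on the left and instead discarding the nonnegative term $\frac{1}{2\gamma N}\E\|\theta_N-\theta_\ast\|^2$. With $\gamma = \frac{1}{2R^2\sqrt{N}}$ one computes $\frac{1}{2\gamma N} = \frac{R^2}{\sqrt{N}}$ and $\frac{\gamma}{2}R^2 = \frac{1}{4\sqrt{N}}$, so the right-hand side becomes $\frac{R^2}{\sqrt{N}}\|\theta_0-\theta_\ast\|^2 + \frac{1}{4\sqrt{N}}$, which is exactly the second claimed inequality.

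There is essentially no hard step here: the only points requiring (minimal) care are that the step size is calibrated to the fixed horizon $N$ while the first bound is asserted for all $n\leqslant N$ simultaneously — legitimate precisely because Proposition~\ref{prop:old} is valid for arbitrary constant step size and arbitrary $n$ — and keeping the two substitutions $\frac{1}{2\gamma N} = \frac{R^2}{\sqrt{N}}$ and $\frac{\gamma}{2}R^2 = \frac{1}{4\sqrt{N}}$ straight. I would also note in passing that the bound on $\E\|\theta_n-\theta_\ast\|^2$ is what keeps the averaged-SGD trajectory in a region around $\theta_\ast$ of radius controlled by $\|\theta_0-\theta_\ast\|$ and $1/R$, which is the ingredient that will later allow the self-concordance argument of \mysec{self} to be localized.
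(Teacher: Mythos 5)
Your proposal is correct and matches the paper's (implicit) argument: the corollary is stated without proof precisely because it follows from Proposition~\ref{prop:old} by the two substitutions you carry out, dropping whichever nonnegative term on the left-hand side is not needed and using $\gamma^2 n R^2 = \frac{n}{4R^2 N} \leqslant \frac{1}{4R^2}$ for $n \leqslant N$. The arithmetic $\frac{1}{2\gamma N} = \frac{R^2}{\sqrt{N}}$ and $\frac{\gamma}{2}R^2 = \frac{1}{4\sqrt{N}}$ checks out.
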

  Note that if $\| \theta_0 - \theta_\ast\|^2$ was known, then a better step-size would be $\gamma = \frac{ \| \theta_0 - \theta_\ast\|}{R \sqrt{N}}$, leading to a convergence rate proportional to $\frac{R \| \theta_0 - \theta_\ast\| }{\sqrt{N}}$. However, this requires an estimate (or simply an upper-bound) of $\| \theta_0 - \theta_\ast\|^2$, which is typically not available.

 We are going to improve this result in several ways:
 \BIT
 \item[--] All moments of $\| \theta_{n }- \theta_\ast \|^2$ and $ f (  \bar{\theta}_n ) - f(\theta_\ast) $ will be bounded, leading to a sub-exponential behavior. Note that we do not assume that the iterates are restricted to a predefined bounded set, which is the usual assumption made to derive tail bounds for stochastic approximation \citep{nesterov2008confidence,nemirovski2009robust,kakade2009generalization}.
 \item[--] We are going to show that the squared norm of the gradient at 
 $\bar{\theta}_n = \frac{1}{n} \sum_{k=1}^n \theta_{k-1}$ converges at rate $O(1/n)$, even in the non-strongly convex case. This will  allow us to derive finer convergence rates in presence of local strong convexity in \mysec{strong}.
 \item[--] The bounds above do not explicitly depend on the dimension of the problem, however, in practice, the quantity $R^2 \| \theta_0 - \theta_\ast\|^2$ typically \emph{implicitly} scales linearly in the problem dimension.
 \EIT

\subsection{Higher-Order and Tail Bound}
\label{sec:high}
In this section, we prove novel higher-order bounds (see the proof in Appendix~\ref{app:boundp}), both for any constant step-sizes and then for the specific choice   $\gamma = \frac{1}{2 R^2 \sqrt{N}}$. This will immediately lead to tail bounds.

\begin{proposition}
\label{prop:boundp}
Assume \textbf{(A1)} and \textbf{(A3-7)}.
With constant step size equal to $\gamma$, for any $n \geqslant 0$ and integer $p  \geqslant 1 $, we have:
$$ \E \bigg(
2\gamma n  \big[  f ( \bar{\theta}_n) -   f(\theta^\ast)  \big]+\| \theta_n - \theta_\ast \|^{2} \bigg)^p
 \leqslant  \big( 3 \|\theta_0 - \theta_\ast \|^2 +  20 n p \gamma^2 R^2  \big)^{p}.
$$
\end{proposition}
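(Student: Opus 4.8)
The plan is to raise the one-step bound \eq{mn} to the $p$-th power after summation and then control the martingale term by an induction on~$p$. First I would sum \eq{mn} over $k=1,\dots,n$. Writing $\delta_k=\|\theta_k-\theta_\ast\|^2$, $S_n = 2\gamma\sum_{k=0}^{n-1}\big[f(\theta_k)-f(\theta_\ast)\big]+\delta_n$, $V_n=\|\theta_0-\theta_\ast\|^2+n\gamma^2R^2$, and $T_n=\sum_{k=1}^n M_k$, the $\delta_k$'s telescope and we obtain $S_n\leqslant V_n+T_n$ almost surely. Two elementary facts will be used throughout: since $f(\theta_k)\geqslant f(\theta_\ast)$ by \textbf{(A3)}, we have $S_n\geqslant 0$, and by convexity of $f$, $S_n\geqslant 2\gamma n\big[f(\bar{\theta}_n)-f(\theta_\ast)\big]+\delta_n\geqslant 0$, so it is enough to bound $\E S_n^p$; and for the same reason $\delta_{k-1}\leqslant S_{k-1}$.

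Next I would isolate the martingale structure of $T_n$. By assumptions \textbf{(A5)} and \textbf{(A6)}, $\E(M_k\mid\F_{k-1})=0$, and by Cauchy--Schwarz together with \textbf{(A4)} (which gives $\|f_k'(\theta_{k-1})-f'(\theta_{k-1})\|\leqslant 2R$) we obtain the self-bounding estimate $|M_k|\leqslant 4\gamma R\sqrt{\delta_{k-1}}\leqslant 4\gamma R\sqrt{S_{k-1}}$. The analytic engine is a martingale moment inequality: for $p\geqslant 2$, $\big(\E|T_n|^p\big)^{2/p}\leqslant(p-1)\sum_{k=1}^n\big(\E|M_k|^p\big)^{2/p}$ (the $2$-smoothness of $L^p$). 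Combining the two, and writing $\|X\|_q=(\E|X|^q)^{1/q}$, this yields $\|T_n\|_p^2\leqslant 16\gamma^2R^2(p-1)\sum_{k=1}^n\|S_{k-1}\|_{p/2}$; together with $\|S_n\|_p\leqslant V_n+\|T_n\|_p$ (which holds because $0\leqslant S_n\leqslant V_n+T_n$), this is the recursion to iterate.

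The bound is then established by induction on the integer $p$, with target $B_n:=3\|\theta_0-\theta_\ast\|^2+20np\gamma^2R^2$, i.e. the claim $\|S_n\|_p\leqslant B_n$ for all $n$. For $p=1$, since $\E T_n=0$ we simply get $\E S_n\leqslant V_n\leqslant B_n$. For $p\geqslant 2$, by monotonicity of $L^q$-norms and the induction hypothesis at the integer exponent $\lceil p/2\rceil\leqslant p-1$ we have $\|S_{k-1}\|_{p/2}\leqslant\|S_{k-1}\|_{\lceil p/2\rceil}\leqslant B_{k-1}\leqslant B_n$, whence $\|S_n\|_p\leqslant V_n+4\gamma R\sqrt{(p-1)nB_n}\leqslant V_n+4\gamma R\sqrt{pnB_n}$. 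It then remains to verify the scalar inequality $V_n+4\gamma R\sqrt{pnB_n}\leqslant B_n$; setting $x=\|\theta_0-\theta_\ast\|^2$ and $y=np\gamma^2R^2$ (so $n\gamma^2R^2=y/p\leqslant y$ and hence $B_n-V_n\geqslant 2x+19y$), squaring reduces this to $16y(3x+20y)\leqslant(2x+19y)^2$, i.e. $0\leqslant 4x^2+28xy+41y^2$, which is immediate — and this computation is precisely what fixes the constants $3$ and $20$.

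The main obstacle is that, with no compactness assumption on the iterates, the increments $M_k$ are not uniformly bounded, so one cannot apply a martingale inequality with deterministic bounds; the whole argument rests on the self-bounding estimate $|M_k|\leqslant 4\gamma R\sqrt{S_{k-1}}$ and on feeding the lower ($p/2$-th) moments of $S$ back into its $p$-th moment through the induction on~$p$. The remaining effort is bookkeeping: choosing $B_n$ so that it serves simultaneously as the hypothesis and the conclusion. One structural point is worth noting — $\lceil p/2\rceil\leqslant p-1$ only holds for $p\geqslant 2$, which is exactly the range in which the $2$-smoothness inequality is available, so the base case $p=1$ genuinely has to be (and is) treated separately.
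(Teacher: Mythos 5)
Your proof is correct, but it takes a genuinely different route from the one in Appendix~\ref{app:boundp}. The paper deliberately avoids any off-the-shelf martingale moment inequality: it conditions the binomial expansion of $\big(A_{n-1}+\gamma^2R^2+M_n\big)^p$ on $\F_{n-1}$, painstakingly bounds the odd-indexed coefficients by the even-indexed ones to reach the recursion in \eq{qwe}, handles small $n$ (namely $n\leqslant 4p$) by a separate almost-sure growth bound, and closes the induction on $p$ with a $\cosh$-series estimate that forces the constant $20$. You instead sum \eq{mn} first, isolate the martingale $T_n=\sum_k M_k$, and invoke the $2$-smoothness inequality $\|T_n\|_p^2\leqslant (p-1)\sum_k\|M_k\|_p^2$ together with the self-bounding estimate $|M_k|\leqslant 4\gamma R\sqrt{S_{k-1}}$, so that the induction only needs the $\lceil p/2\rceil$-th moment of $S_{k-1}$; the scalar verification $16y(3x+20y)\leqslant(2x+19y)^2$ then fixes the same constants $3$ and $20$ with no case distinction on $n$. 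This is essentially the ``simpler alternative proof'' that the paper itself attributes to~\citet{newsto} via the Burkholder--Rosenthal--Pinelis inequality~\cite[Theorem 4.1]{pinelis}, of which your $2$-smoothness bound is a variant. What your route buys is brevity and a transparent role for each constant; what the paper's route buys is self-containedness (no external moment inequality is cited in the proof). Two points you should make explicit if you write this up: state the reference for the inequality $\|X+Y\|_p^2\leqslant\|X\|_p^2+(p-1)\|Y\|_p^2$ for $X$ being $\F$-measurable and $\E(Y\,|\,\F)=0$, $p\geqslant 2$, since it carries the whole argument; and note that the induction hypothesis at exponent $q=\lceil p/2\rceil$ yields the bound with $20nq\gamma^2R^2$, which you then majorize by $20np\gamma^2R^2$ -- this is implicit in your ``$\leqslant B_{k-1}\leqslant B_n$'' but worth spelling out.
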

\begin{corollary}
Assume \textbf{(A1)} and \textbf{(A3-7)}.
With constant step size equal to $\gamma = \frac{1}{2R^2\sqrt{N}}$, for any integer $p  \geqslant 1$, we have:
\BEAS
\forall n \in \{1,\dots,N\}, \ \ 
   \E \| \theta_{n }- \theta_\ast \|^{2p}
& \leqslant  & \Big[ \frac{1}{R^2} \big( 3 R^2\| \theta_{0 }- \theta_\ast \|^2
+ 5 p \big) \Big]^p ,\\
 \E \big[
  f ( \bar{\theta}_N) -   f(\theta^\ast)   \big]^p
& \leqslant & 
\Big[ \frac{1}{\sqrt{N} } \big(  3  R^2\| \theta_{0 }- \theta_\ast \|^2
+ 5 p   \big) \Big]^p.
\EEAS
\end{corollary}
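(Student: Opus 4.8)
The plan is to read off both bounds directly from Proposition~\ref{prop:boundp} by plugging in $\gamma = \frac{1}{2R^2\sqrt{N}}$ and discarding, in each case, one of the two nonnegative contributions to the left-hand side. The basic remark is that both $2\gamma n\big[f(\bar\theta_n) - f(\theta_\ast)\big]$ and $\|\theta_n - \theta_\ast\|^2$ are nonnegative: the second trivially, the first because $\gamma > 0$, $n \geqslant 0$ and, by \textbf{(A3)}, $\theta_\ast$ is a global minimizer so $f(\bar\theta_n) \geqslant f(\theta_\ast)$. Consequently each of them is pointwise dominated by the bracketed sum $2\gamma n[f(\bar\theta_n) - f(\theta_\ast)] + \|\theta_n - \theta_\ast\|^2$, and since $x \mapsto x^p$ is nondecreasing on $[0,\infty)$ for integer $p \geqslant 1$, the same domination survives raising to the power $p$ and taking expectations.

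For the first inequality I would bound $\E\|\theta_n - \theta_\ast\|^{2p}$ by $\E\big(2\gamma n[f(\bar\theta_n) - f(\theta_\ast)] + \|\theta_n - \theta_\ast\|^2\big)^p$, which Proposition~\ref{prop:boundp} controls by $\big(3\|\theta_0 - \theta_\ast\|^2 + 20 n p \gamma^2 R^2\big)^p$. With the chosen step size $\gamma^2 R^2 = \tfrac{1}{4R^2 N}$, so $20 n p \gamma^2 R^2 = \tfrac{5np}{R^2 N} \leqslant \tfrac{5p}{R^2}$ using $n \leqslant N$; factoring $1/R^2$ out of the bracket then gives exactly $\big[\tfrac{1}{R^2}(3R^2\|\theta_0 - \theta_\ast\|^2 + 5p)\big]^p$.

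For the second inequality I would instead drop $\|\theta_n - \theta_\ast\|^2 \geqslant 0$ from the left-hand side of Proposition~\ref{prop:boundp} and divide through by $(2\gamma n)^p$, obtaining $\E\big[f(\bar\theta_n) - f(\theta_\ast)\big]^p \leqslant \big(\tfrac{3\|\theta_0 - \theta_\ast\|^2 + 20 n p \gamma^2 R^2}{2\gamma n}\big)^p$. Specializing to $n = N$, one has $2\gamma N = \sqrt{N}/R^2$ and, as above, $20 N p \gamma^2 R^2 = 5p/R^2$, so the ratio equals $\tfrac{R^2(3\|\theta_0 - \theta_\ast\|^2 + 5p/R^2)}{\sqrt N} = \tfrac{3R^2\|\theta_0 - \theta_\ast\|^2 + 5p}{\sqrt N}$, which is the claimed bound.

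I do not expect any genuine obstacle: the corollary is a mechanical specialization of Proposition~\ref{prop:boundp}. The only two points deserving attention are invoking \textbf{(A3)} to legitimize dropping the excess-risk term (it must be nonnegative), and being careful with the arithmetic of the constants when substituting $\gamma$ — in particular using $n \leqslant N$ to turn $n/N$ into $1$ for the first bound, while keeping $n = N$ exactly for the second so that the $1/\sqrt N$ rate appears.
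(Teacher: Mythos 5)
Your proof is correct and is exactly the intended argument: the paper states this corollary as an immediate specialization of Prop.~\ref{prop:boundp} (dropping whichever of the two nonnegative terms is not needed, then substituting $\gamma = \frac{1}{2R^2\sqrt{N}}$ with $20np\gamma^2R^2 = \frac{5np}{R^2N} \leqslant \frac{5p}{R^2}$ and $2\gamma N = \frac{\sqrt{N}}{R^2}$), and gives no separate proof. Your arithmetic checks out, and your reading of the first bound as holding for $\theta_n$ with all $n\leqslant N$ is the intended one.
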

In Appendix~\ref{app:boundp}, we first provide two alternative proofs of the same result: (a) our original somewhat tedious proof based on taking powers of the inequality in \eq{mn} and using martingale moment inequalities, (b) a shorter proof later derived by \citet{newsto}, that uses Burkholder-Rosenthal-Pinelis  inequality \citep[Theorem 4.1]{pinelis}. We also  provide in  Appendix~\ref{app:boundp} a direct proof of the large deviation bound that we now present.

Having a bound on all moments allows immediately to derive large deviation bounds in the same two cases (by applying Lemma~\ref{lemma:conc} from Appendix~\ref{app:proba}):
 \begin{proposition}
 \label{prop:dev}
 Assume \textbf{(A1)} and \textbf{(A3-7)}.
With constant step size equal to $\gamma$, for any $n \geqslant 0$ and $ t \geqslant 0$, we have:
 \BEAS
 \P\Big(
  f ( \bar{\theta}_n) -   f(\theta_\ast)  \geqslant  30 \gamma R^2 t + \frac{3 \| \theta_0 - \theta_\ast\|^2}{\gamma n} \Big)
  \leqslant 2 \exp (-t) , \\
  \P\Big(
  \|  {\theta}_n  -   \theta_\ast \|^2  \geqslant   60 n \gamma^2 R^2  t +  6 \| \theta_0 - \theta_\ast\|^2
 \Big)
  \leqslant 2 \exp (-t) . \EEAS 
 \end{proposition}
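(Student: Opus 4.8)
The plan is to derive these tail bounds directly from the polynomial moment estimates of Proposition~\ref{prop:boundp} by invoking the sub-exponential concentration lemma of Appendix~\ref{app:proba} (Lemma~\ref{lemma:conc}): for a nonnegative random variable $Z$ satisfying $\E Z^p \leqslant (ap+b)^p$ for every integer $p \geqslant 1$, one has $\P(Z \geqslant 3at + 2b) \leqslant 2e^{-t}$ for all $t \geqslant 0$ (which itself follows from Markov's inequality applied to $Z^p$ together with a near-optimal integer choice of $p$ as a function of $t$). So the task reduces to identifying the right pair $(a,b)$ in each of the two cases.

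First I would observe that in Proposition~\ref{prop:boundp} both terms inside the $p$-th power, namely $2\gamma n\,[f(\bar\theta_n) - f(\theta_\ast)]$ and $\|\theta_n - \theta_\ast\|^2$, are almost surely nonnegative --- the first because $\theta_\ast$ is a global minimizer of $f$ by \textbf{(A3)} and $\gamma, n > 0$. Since $x \mapsto x^p$ is nondecreasing on $[0,\infty)$, the sum dominates each summand, so each of the two terms raised to the power $p$ has expectation at most $\big(3\|\theta_0 - \theta_\ast\|^2 + 20np\gamma^2 R^2\big)^p$.

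For the excess-risk tail, dividing $\E\big(2\gamma n\,[f(\bar\theta_n) - f(\theta_\ast)]\big)^p \leqslant \big(3\|\theta_0 - \theta_\ast\|^2 + 20np\gamma^2 R^2\big)^p$ by $(2\gamma n)^p$ yields
\[
\E\big[f(\bar\theta_n) - f(\theta_\ast)\big]^p \leqslant \Big( 10\gamma R^2\, p + \frac{3\|\theta_0 - \theta_\ast\|^2}{2\gamma n}\Big)^p ,
\]
and I then apply Lemma~\ref{lemma:conc} with $a = 10\gamma R^2$ and $b = \frac{3\|\theta_0 - \theta_\ast\|^2}{2\gamma n}$; since $3a = 30\gamma R^2$ and $2b = \frac{3\|\theta_0 - \theta_\ast\|^2}{\gamma n}$, this is precisely the first stated inequality. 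For the iterate tail I apply the same lemma to $Z = \|\theta_n - \theta_\ast\|^2$ with $a = 20n\gamma^2 R^2$ and $b = 3\|\theta_0 - \theta_\ast\|^2$; then $3a = 60n\gamma^2 R^2$ and $2b = 6\|\theta_0 - \theta_\ast\|^2$ give the second inequality.

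I do not expect a genuine obstacle here. The only points that need a little care are that the two summands in Proposition~\ref{prop:boundp} can be isolated from one another only because both are nonnegative, and that Lemma~\ref{lemma:conc} requires the moment bound to hold for \emph{every} integer $p \geqslant 1$ --- which Proposition~\ref{prop:boundp} supplies uniformly in $p$; everything else is just bookkeeping of numerical constants. (If one wished to bypass the packaged lemma, the chain $\P(Z \geqslant u) \leqslant \E Z^p/u^p \leqslant \big((ap+b)/u\big)^p$, used with $u$ a fixed multiple of $ap+b$ and $p$ of order $t$, delivers the same conclusion up to the precise constants.)
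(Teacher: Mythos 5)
Your proposal is correct and follows exactly the route the paper takes: the paper derives Prop.~\ref{prop:dev} in one line by applying Lemma~\ref{lemma:conc} to the moment bounds of Prop.~\ref{prop:boundp}, dropping one of the two nonnegative summands in each case, and your identification of the constants $(A,B)$ reproduces the stated $30\gamma R^2 t + 3\|\theta_0-\theta_\ast\|^2/(\gamma n)$ and $60 n\gamma^2 R^2 t + 6\|\theta_0-\theta_\ast\|^2$ thresholds. Your remark that the moment bound holds for every integer $p$ (so the restriction $t\leqslant n/2$ in the lemma is vacuous here) is the only subtlety, and you have addressed it.
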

 \begin{corollary}
 Assume \textbf{(A1)} and \textbf{(A3-7)}.
With constant step size equal to $\gamma = \frac{1}{2R^2\sqrt{N}}$, for any  $ t \geqslant 0$ we have:
 \BEAS
 \P\Big(
  f ( \bar{\theta}_N) -   f(\theta_\ast)  \geqslant  \frac{15 t }{ \sqrt{N}}  + \frac{ 6R^2 \| \theta_0 - \theta_\ast\|^2}{\sqrt{N}} \Big)
  \leqslant 2 \exp (-t) , \\
  \P\Big(
  \|  {\theta}_N  -   \theta_\ast \|^2  \geqslant    {15}{}   R^{-2}  t +  6 \| \theta_0 - \theta_\ast\|^2
 \Big)
  \leqslant 2 \exp (-t).
   \EEAS 
 \end{corollary}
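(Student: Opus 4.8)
The plan is to obtain this corollary as a direct specialization of Proposition~\ref{prop:dev}: take the two probabilistic inequalities there, set $n = N$, and substitute the constant step size $\gamma = \frac{1}{2R^2\sqrt{N}}$. Concretely, in the function-value bound the leading term becomes $30\gamma R^2 t = \frac{15 t}{\sqrt{N}}$ and the initial-condition term becomes $\frac{3\|\theta_0 - \theta_\ast\|^2}{\gamma N} = \frac{6 R^2 \|\theta_0 - \theta_\ast\|^2}{\sqrt{N}}$; in the iterate bound, $60 N \gamma^2 R^2 t = \frac{15 t}{R^2}$ while the additive term $6\|\theta_0-\theta_\ast\|^2$ is left untouched. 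In both cases the right-hand side keeps the factor $2\exp(-t)$, so one recovers verbatim the two displayed inequalities. Since Proposition~\ref{prop:dev} is valid for any constant step size, there is nothing to check about admissibility of this particular $\gamma$.

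As a consistency check, I would also note that the same statement can be reached one step earlier, by feeding the moment bounds of the Corollary following Proposition~\ref{prop:boundp} into Lemma~\ref{lemma:conc}. Writing those bounds in the form $\E Z^p \leqslant (\alpha p + \beta)^p$ gives $(\alpha,\beta) = \big(\tfrac{5}{\sqrt{N}},\, \tfrac{3R^2\|\theta_0-\theta_\ast\|^2}{\sqrt{N}}\big)$ for $Z = f(\bar\theta_N) - f(\theta_\ast)$ and $(\alpha,\beta) = \big(\tfrac{5}{R^2},\, 3\|\theta_0-\theta_\ast\|^2\big)$ for $Z = \|\theta_N - \theta_\ast\|^2$; the sub-exponential tail furnished by Lemma~\ref{lemma:conc} is of the shape $\P(Z \geqslant 3\alpha t + 2\beta) \leqslant 2 e^{-t}$, and $3\alpha t + 2\beta$ reproduces exactly the two thresholds above. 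The agreement of the two routes is the only real ``verification'' involved.

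There is essentially no obstacle here: the result is a pure corollary, and the only thing requiring care is bookkeeping of the numerical constants ($30 \to 15$, $3 \to 6$, $60 \to 15$) after plugging in $\gamma = \frac{1}{2R^2\sqrt{N}}$ and $n = N$. I would therefore present the substitution explicitly for each of the two inequalities and stop.
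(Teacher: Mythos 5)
Your proof is correct and is exactly the route the paper intends: the corollary is the direct specialization of Proposition~\ref{prop:dev} with $n=N$ and $\gamma = \frac{1}{2R^2\sqrt{N}}$, and your constant bookkeeping ($30\gamma R^2 = 15/\sqrt{N}$, $3/(\gamma N) = 6R^2/\sqrt{N}$, $60N\gamma^2R^2 = 15/R^2$) checks out. The paper provides no separate proof for this corollary, so the substitution (together with your consistency check via Lemma~\ref{lemma:conc}) is all that is needed.
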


 We can make the following observations:
\BIT
\item[--] The results above are obtained by direct application of Proposition~\ref{prop:boundp}. In \myapp{boundp}, we also provide an alternative direct proof of a slightly weaker result, which was suggested and  outlined by Alekh Agarwal (personal communication), and that uses 
Freedman's inequality for martingales \citep[Theorem 1.6]{freedman}.

\item[--] The results above bounding the  norm between the last iterate and a global optimum extend to the averaged iterate.

\item[--] The iterates $\theta_n$ and $\bar{\theta}_n$ do not necessarily converge to $\theta_\ast$ (note that $\theta_\ast$ may not be unique in general anyway).
\item[--] Given that $(\E [ f(\bar{\theta}_n) - f(\theta_\ast) ]^p)^{1/p}$ is affine in $p$, we obtain a subexponential behavior, that is, tail bounds similar to an exponential distribution. The same decay was obtained by \citet{nesterov2008confidence} and \citet{nemirovski2009robust}, but with an extra orthogonal projection step that   is equivalent in our setting to  know a bound  on $\| \theta_\ast\|$, which is in practice not available.

\item[--] The constants in the  bounds of of Proposition~\ref{prop:boundp} (and thus other results as well) could clearly be improved. In particular, we have, for $p=1,2,3$ (see proof in Appendix~\ref{app:F}):
 \BEAS
  \E \Big(
2\gamma n  \big[  f ( \bar{\theta}_n) -   f(\theta^\ast)  \big]+\| \theta_n - \theta_\ast \|^{2} \Big)
&  \leqslant &     \|\theta_0 - \theta_\ast \|^2 + n   \gamma^2 R^2,
\\
 \E \Big(
2\gamma n  \big[  f ( \bar{\theta}_n) -   f(\theta^\ast)  \big]+\| \theta_n - \theta_\ast \|^{2} \Big)^2
& \leqslant & \big( \|\theta_0 - \theta_\ast \|^2 +  9 n  \gamma^2 R^2  \big)^{2},
\\
 \E \Big(
2\gamma n  \big[  f ( \bar{\theta}_n) -   f(\theta^\ast)  \big]+\| \theta_n - \theta_\ast \|^{2} \Big)^3
& \leqslant & \big( \|\theta_0 - \theta_\ast \|^2 +  20 n  \gamma^2 R^2  \big)^{3}.
 \EEAS

\EIT 

\subsection{Convergence of Gradients}
\label{sec:grad}

In this section, we prove higher-order bounds on the convergence of the gradient, with an improved rate $O(1/n)$ for $\| f'(\bar{\theta}_n ) \|^2$. In this section, we will need  the self-concordance property in Assumption~\textbf{(A2)}.
\begin{proposition}
\label{prop:grad}
Assume   \textbf{(A1-7)}.
With constant step size equal to $\gamma$, for any $n \geqslant 0$ and integer $p$, we have:
{\small
$$  
\!\!
\bigg( \E \bigg\|   f' \bigg(\frac{1}{n} \sum_{k=1}^n \theta_{k-1}   \bigg) \bigg\|^{2p} \bigg)^{1/2p} 
 \!\! \leqslant  \frac{R}{\sqrt{n}}
\bigg[
 {8}{\sqrt{p}} + \frac{4 p}{\sqrt{n}} + 40  R^2 \gamma p \sqrt{n}
 +
\frac{3}{ \gamma  \sqrt{n}}
\|\theta_0 - \theta_\ast\|^2  + \frac{3}{\gamma R \sqrt{n}} \| \theta_0 - \theta_\ast\|
\bigg].
$$
}
\end{proposition}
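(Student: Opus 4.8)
The plan is to decompose the averaged gradient into a stochastic part, controlled through the recursion and martingale moment inequalities, and a bias part, controlled through self-concordance. Set $\varepsilon_k := f_k'(\theta_{k-1}) - f'(\theta_{k-1})$; by \textbf{(A4)} and \textbf{(A6)} this is a martingale difference with $\|\varepsilon_k\|\le 2R$ and $\E(\varepsilon_k\mid\F_{k-1})=0$. Write
\[ f'(\bar\theta_n) \;=\; A_n + B_n,\qquad B_n := \tfrac1n{\sum}_{k=1}^n f'(\theta_{k-1}),\qquad A_n := f'(\bar\theta_n) - B_n. \]
Telescoping the recursion \textbf{(A7)} gives $\sum_{k=1}^n f_k'(\theta_{k-1}) = (\theta_0-\theta_n)/\gamma$, hence $B_n = \tfrac{\theta_0-\theta_n}{\gamma n} - \tfrac1n\sum_{k=1}^n\varepsilon_k$. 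For the first summand I would use Minkowski's inequality together with the moment bound $\E\|\theta_n-\theta_\ast\|^{2p}\le\big(3\|\theta_0-\theta_\ast\|^2+20np\gamma^2R^2\big)^p$ contained in Proposition~\ref{prop:boundp} (the bracketed quantity there dominates $\|\theta_n-\theta_\ast\|^2$); for the martingale $\sum_{k=1}^n\varepsilon_k$ I would invoke a Burkholder--Rosenthal--Pinelis moment inequality for Hilbert-space valued martingales with bounded increments (the same device used in the proof of Proposition~\ref{prop:boundp}), giving $\big(\E\|\sum_{k=1}^n\varepsilon_k\|^{2p}\big)^{1/2p}\lesssim R\sqrt{np}$. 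Together these account for the $\tfrac{R}{\sqrt n}\,8\sqrt p$ and $\tfrac{R}{\sqrt n}\cdot\tfrac{3}{\gamma R\sqrt n}\|\theta_0-\theta_\ast\|$ contributions in the claimed bound.

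The heart of the proof is the bias term $A_n$, and this is where \textbf{(A2)} enters. Linearizing $f'$ about $\bar\theta_n$ and using the exact identity $\sum_{k=1}^n(\theta_{k-1}-\bar\theta_n)=0$, the Hessian contribution cancels and $A_n=-\tfrac1n\sum_{k=1}^n\rho_k$, where $\rho_k := f'(\theta_{k-1})-f'(\bar\theta_n)-f''(\bar\theta_n)(\theta_{k-1}-\bar\theta_n)$ is the second-order Taylor remainder of $f'$ at $\bar\theta_n$. The point is that generalized self-concordance controls this remainder by a \emph{Bregman divergence}, not by $\|\theta_{k-1}-\bar\theta_n\|^2$: writing $\rho_k = \int_0^1\!\big(f''(\theta^k_s)-f''(\bar\theta_n)\big)(\theta_{k-1}-\bar\theta_n)\,ds$ with $\theta^k_s=\bar\theta_n+s(\theta_{k-1}-\bar\theta_n)$ and $\varphi_k(s):=f(\theta^k_s)$, differentiating under the integral and invoking the multilinear form of \textbf{(A2)}, namely $|D^3f(\theta)[z,u,z]|\le R\|u\|\,D^2f(\theta)[z,z]$ for all $z,u$ (obtained from \textbf{(A2)} by polarization, see \citealp{bach2010self}), one gets for any unit $u$
\[ |\langle u,\rho_k\rangle|\;\le\;R\int_0^1\!\!\int_0^t\varphi_k''(s)\,ds\,dt\;=\;R\big(\varphi_k(1)-\varphi_k(0)-\varphi_k'(0)\big)\;=\;R\,D_f(\theta_{k-1},\bar\theta_n), \]
where $D_f(\theta,\theta'):=f(\theta)-f(\theta')-\langle f'(\theta'),\theta-\theta'\rangle\ge0$. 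Hence $\|\rho_k\|\le R\,D_f(\theta_{k-1},\bar\theta_n)$, and since $\tfrac1n\sum_k D_f(\theta_{k-1},\bar\theta_n)=\big(\tfrac1n\sum_k f(\theta_{k-1})\big)-f(\bar\theta_n)\le\tfrac1n\sum_k\big(f(\theta_{k-1})-f(\theta_\ast)\big)$ (using $\bar\theta_n=\tfrac1n\sum_{k=1}^n\theta_{k-1}$ and $f(\bar\theta_n)\ge f(\theta_\ast)$), we obtain $\|A_n\|\le\tfrac Rn\sum_{k=1}^n\big(f(\theta_{k-1})-f(\theta_\ast)\big)$.

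It then remains to bound $\tfrac1n\sum_{k=1}^n\big(f(\theta_{k-1})-f(\theta_\ast)\big)$ in $L^{2p}$, exactly as in the opening of the section: summing \eq{mn} over $k$ gives $\tfrac1n\sum_{k=1}^n\big(f(\theta_{k-1})-f(\theta_\ast)\big)\le\tfrac{\|\theta_0-\theta_\ast\|^2}{2\gamma n}+\tfrac{\gamma R^2}{2}+\tfrac{1}{2\gamma n}\sum_{k=1}^n M_k$ with $M_k=-2\gamma\langle\theta_{k-1}-\theta_\ast,\varepsilon_k\rangle$, and the martingale $\sum_{k=1}^n M_k$ is again controlled by the Burkholder--Rosenthal--Pinelis inequality, reusing the $L^{2p}$ bounds on $\|\theta_{k-1}-\theta_\ast\|$ from Proposition~\ref{prop:boundp} (its conditional variance being $\le 16\gamma^2R^2\sum_k\|\theta_{k-1}-\theta_\ast\|^2$). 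This produces the remaining terms $\tfrac{R}{\sqrt n}\big(\tfrac{4p}{\sqrt n}+40R^2\gamma p\sqrt n+\tfrac{3}{\gamma\sqrt n}\|\theta_0-\theta_\ast\|^2\big)$ — the $R^2\gamma p$ piece from $\tfrac{\gamma R^2}{2}$ and the variance of $\sum M_k$, the $\|\theta_0-\theta_\ast\|^2/(\gamma n)$ piece from the deterministic part — and Minkowski's inequality $\big(\E\|f'(\bar\theta_n)\|^{2p}\big)^{1/2p}\le\big(\E\|A_n\|^{2p}\big)^{1/2p}+\big(\E\|B_n\|^{2p}\big)^{1/2p}$ assembles the statement.

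I expect the bias estimate to be the only genuine difficulty. Any bound on $\|\rho_k\|$ that does not use self-concordance is at best quadratic in $\|\theta_{k-1}-\bar\theta_n\|$, and because the iterates are inconsistent — so $\tfrac1n\sum_k\|\theta_{k-1}-\bar\theta_n\|^2$ does not vanish — that only yields $\|A_n\|=O(R)$; self-concordance is exactly what replaces the Hessian-size remainder by the function-value remainder $\|\rho_k\|\le R\,D_f(\theta_{k-1},\bar\theta_n)$, which is of order $1/\sqrt n$ on average. The rest is martingale bookkeeping, where the only care needed is to track the sub-Gaussian scale of $\|\theta_{k-1}-\theta_\ast\|$ so that all constants remain absolute.
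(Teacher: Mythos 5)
Your proposal is correct and follows essentially the same route as the paper: the same decomposition of $f'(\bar\theta_n)$ into the averaged gradient (telescoped via \textbf{(A7)} and controlled by a Burkholder--Rosenthal--Pinelis-type martingale bound plus Prop.~\ref{prop:boundp}) and a bias term controlled by the self-concordance bound on the first-order Taylor remainder of $f'$ in terms of Bregman divergences of $f$ (the paper's Lemma~\ref{lemma:selfcontrol}), finishing with the $L^{2p}$ control of $\frac1n\sum_k f(\theta_{k-1})-f(\theta_\ast)$. The only (cosmetic) differences are that you expand around $\bar\theta_n$ rather than $\theta_\ast$ --- which even saves a factor of $2$ in the bias bound --- and that you re-derive the function-value moment bound via the martingale inequality where the paper simply reuses Prop.~\ref{prop:boundp}.
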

\begin{corollary}
Assume   \textbf{(A1-7)}.
With constant step size equal to $\gamma = \frac{1}{2R^2\sqrt{N}}$, for any integer $p $, we have:
$$
\bigg( \E \bigg\|   f' \bigg(\frac{1}{N} \sum_{k=1}^N \theta_{k-1}   \bigg) \bigg\|^{2p} \bigg)^{1/2p} 
 \!\! \leqslant  \!
\frac{R}{\sqrt{N}}
\bigg[
 {8}{\sqrt{p}} + \frac{4 p}{\sqrt{n}} + 20  p
 +
{6R^2}
\|\theta_0 - \theta_\ast\|^2  +  6R \| \theta_0 - \theta_\ast\|
\bigg].
 $$
\end{corollary}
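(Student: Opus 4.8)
The plan is to bound $\|f'(\bar\theta_n)\|^2$ by relating it to the quantity $\delta_n := 2\gamma n[f(\bar\theta_n)-f(\theta_\ast)] + \|\theta_n-\theta_\ast\|^2$ controlled in Proposition~\ref{prop:boundp}, together with a telescoping identity for the iterates. First I would observe that the averaged gradient can be extracted from the recursion: since $\theta_k = \theta_{k-1} - \gamma f_k'(\theta_{k-1})$, summing gives $\frac{1}{n}\sum_{k=1}^n f_k'(\theta_{k-1}) = \frac{1}{\gamma n}(\theta_0 - \theta_n)$. The left-hand side is close to $\frac{1}{n}\sum_{k=1}^n f'(\theta_{k-1})$ up to a martingale term $\frac{1}{n}\sum_{k=1}^n [f_k'(\theta_{k-1}) - f'(\theta_{k-1})]$, whose moments are controlled by the Burkholder--Rosenthal--Pinelis / martingale moment inequalities (each increment has norm $\leqslant 2R$), giving a contribution of order $R\sqrt{p/n}$ to the $2p$-th root. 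This accounts for the $8\sqrt p$ term, while the $\frac{3}{\gamma\sqrt n}\|\theta_0-\theta_\ast\|^2$-type terms come from $\frac{1}{\gamma n}\|\theta_0-\theta_n\| \leqslant \frac{1}{\gamma n}(\|\theta_0-\theta_\ast\| + \|\theta_n-\theta_\ast\|)$ and Proposition~\ref{prop:boundp}.

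The remaining and most delicate point is to pass from the \emph{averaged gradient} $\frac{1}{n}\sum_{k=1}^n f'(\theta_{k-1})$ to the \emph{gradient at the average} $f'(\bar\theta_n)$. This is exactly where generalized self-concordance (A2) enters: convexity alone only gives $\langle f'(\bar\theta_n), \bar\theta_n-\theta_\ast\rangle \leqslant \frac1n\sum\langle f'(\theta_{k-1}),\theta_{k-1}-\theta_\ast\rangle$ in one direction, which is not enough. Instead I would use the self-concordance bound on Hessians: (A2) implies that for any $\theta_1,\theta_2$, $f''(\theta_2) \preccurlyeq e^{R\|\theta_1-\theta_2\|} f''(\theta_1)$, and more usefully a Taylor-type bound
\BEAS
\Big\| f'(\bar\theta_n) - \frac1n\sum_{k=1}^n f'(\theta_{k-1}) \Big\|
&\leqslant& \text{(something controlled by second derivatives and } \|\theta_{k-1}-\bar\theta_n\|\text{)},
\EEAS
since $\sum_k (\theta_{k-1}-\bar\theta_n) = 0$ kills the first-order term, leaving a second-order remainder $\lesssim \frac{1}{n}\sum_k f''(\xi_k)[\theta_{k-1}-\bar\theta_n,\,\cdot\,]$ with self-concordance converting the unknown Hessians at intermediate points into Hessians near $\bar\theta_n$ times exponential-in-$R\|\cdot\|$ factors. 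Bounding $\|f''\| \leqslant $ (a constant involving $R$, using $\|f_n'\| \leqslant R$ and self-concordance to control the operator norm of the Hessian by $R^2$) reduces this to controlling $\frac1n\sum_k \|\theta_{k-1}-\bar\theta_n\|^2$, whose moments again follow from Proposition~\ref{prop:boundp} (this produces the $R^2\gamma p\sqrt n$ term after multiplying by $\gamma$ through the step-size normalization).

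Concretely the key steps in order are: (i) write $f'(\bar\theta_n) = \underbrace{\tfrac1n\sum_k f'(\theta_{k-1})}_{=\tfrac{1}{\gamma n}(\theta_0-\theta_n)\,-\,\text{mart.}} + \big[f'(\bar\theta_n) - \tfrac1n\sum_k f'(\theta_{k-1})\big]$; (ii) bound the martingale term via martingale $L^{2p}$ moment inequalities to get $O(R\sqrt{p/n})$; (iii) bound $\frac{1}{\gamma n}\|\theta_0-\theta_n\|$ via the triangle inequality and Proposition~\ref{prop:boundp}, yielding the $\|\theta_0-\theta_\ast\|$-dependent and $R^2\gamma\sqrt n$-type terms; (iv) bound the self-concordance remainder term by $\frac{C R^2}{n}\sum_k \|\theta_{k-1}-\bar\theta_n\|^2$ (absorbing exponential factors using that self-concordance gives multiplicative, not additive, Hessian control and the individual $\|\theta_{k-1}-\bar\theta_n\|$ are controlled in every moment); (v) use $\|\theta_{k-1}-\bar\theta_n\|^2 \leqslant 2\|\theta_{k-1}-\theta_\ast\|^2 + 2\|\bar\theta_n-\theta_\ast\|^2$ and Minkowski to bound the $L^{2p}$ norm of the sum; (vi) collect terms, apply $\gamma = \frac{1}{2R^2\sqrt N}$ and $n\leqslant N$ for the corollary. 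The main obstacle is step (iv): making the self-concordance argument quantitative without picking up an uncontrolled exponential in $R\|\theta_\ast\|$ or $R\max_k\|\theta_{k-1}\|$, which requires carefully exploiting that the first-order terms cancel and that self-concordance bounds ratios of Hessians rather than their absolute size — combined with the already-established fact that all moments of $\|\theta_{k-1}-\theta_\ast\|$ are finite and $O(1)$ in the relevant scaling.
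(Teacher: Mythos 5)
Your steps (i)--(iii) match the paper's proof: the identity $\frac1n\sum_k f'(\theta_{k-1}) = \frac1n\sum_k[f'(\theta_{k-1})-f_k'(\theta_{k-1})] + \frac{1}{\gamma n}(\theta_0-\theta_\ast)+\frac{1}{\gamma n}(\theta_\ast-\theta_n)$, the Burkholder--Rosenthal--Pinelis bound on the martingale term, and Prop.~\ref{prop:boundp} for $\|\theta_n-\theta_\ast\|$ are exactly what is done. The gap is in step (iv), which is the heart of the argument. Expanding $\frac1n\sum_k f'(\theta_{k-1})$ to second order around $\bar\theta_n$ and using the cancellation $\sum_k(\theta_{k-1}-\bar\theta_n)=0$ leaves a remainder of order $\frac{R^3}{n}\sum_k\|\theta_{k-1}-\bar\theta_n\|^2$ (the third derivative is bounded by $R$ times the Hessian, itself bounded by $2R^2$ under \textbf{(A2)} and \textbf{(A4)}). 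But this quantity does \emph{not} decay with $n$: by Prop.~\ref{prop:boundp} the iterates spread over a ball of squared radius of order $\|\theta_0-\theta_\ast\|^2 + n\gamma^2R^2$, which with $\gamma=\frac{1}{2R^2\sqrt N}$ and $n=N$ is $\Theta\big(\|\theta_0-\theta_\ast\|^2 + R^{-2}\big)$ --- a constant. Your remainder is therefore $\Theta(R^3\|\theta_0-\theta_\ast\|^2 + R)$, no better than the trivial Lipschitz bound $\|f'\|\leqslant R$, and the claimed $O(R/\sqrt{n})$ rate cannot be recovered this way. The individual distances $\|\theta_{k-1}-\bar\theta_n\|$ being ``controlled in every moment'' is not enough; they must tend to zero for a second-order remainder to help, and they do not.

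The paper's route (Lemma~\ref{lemma:selfcontrol}) avoids this by expanding the gradient around $\theta_\ast$ rather than around $\bar\theta_n$, and by bounding the \emph{gradient} Taylor remainder by $R$ times the \emph{function-value} Taylor remainder: $\|f'(\theta)-f'(\theta_\ast)-f''(\theta_\ast)(\theta_\ast-\theta)\|\leqslant R[f(\theta)-f(\theta_\ast)-\langle f'(\theta_\ast),\theta_\ast-\theta\rangle]$. Applying this at each $\theta_{k-1}$ and at $\bar\theta_n$ and using convexity gives $\big\|\frac1n\sum_k f'(\theta_{k-1}) - f'(\bar\theta_n)\big\|\leqslant 2R\big(\frac1n\sum_k f(\theta_{k-1})-f(\theta_\ast)\big)$, and the averaged function-value suboptimality \emph{does} decay, at rate $\frac{1}{2\gamma n}(3\|\theta_0-\theta_\ast\|^2+20np\gamma^2R^2)$ by Prop.~\ref{prop:boundp} --- this is precisely the point the paper makes after Lemma~\ref{lemma:selfcontrol}: function values converge even though the iterates need not. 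That is what produces the $40R^2\gamma p\sqrt n$ and $\frac{3}{\gamma\sqrt n}\|\theta_0-\theta_\ast\|^2$ terms in the proposition; to fix your proof, replace step (iv) with this lemma.
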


 We can make the following observations:
\BIT
\item[--] The squared norm of the gradient $\|f'(\bar{\theta}_N)\|^2$ converges at rate $O(1/N)$.
\item[--] Given that $(\E \|f'(\bar{\theta}_N)\|^{2p})^{1/2p}$ is affine in $p$, we obtain a subexponential behavior for $\|f'(\bar{\theta}_N)\|$, that is, tail bounds similar to an exponential distribution.
\item[--] The proof of Proposition~\ref{prop:grad} makes use of the self-concordance assumption (that allows to upperbound deviations of gradients by deviations of function values) together with the proof technique of \citet{polyak1992acceleration}.

\EIT 

 \section{Self-Concordance Analysis for Strongly-Convex Problems}
 \label{sec:self}
 \label{sec:strong}

In the previous section, we have shown that $\| f'(\bar{\theta}_N) \|^2$ is of order $O(1/N)$. If the function $f$ was strongly convex with constant $\mu >0$, this would immediately lead to the bound $  f(\bar{\theta}_N) - f(\theta_\ast) \leqslant \frac{1}{2 \mu} \| f'(\bar{\theta}_N) \|^2$, of order $O(1/\mu N)$. However, because of the Lipschitz-continuity of $f$ on the full Hilbert space $\H$, it cannot be strongly convex.
In this section, we show how the self-concordance assumption may be used to obtain the exact same behavior, but with $\mu$ replaced by the \emph{local} strong convexity constant, which is more likely to be strictly positive.

The required property is summarized in the following proposition about (generalized) self-concordant function (see proof in Appendix~\ref{app:selfc}):
\begin{lemma}
\label{prop:selfc}
Let $f$ be a convex three-times differentiable function from $\H$ to $\rb$, such that for all $\theta_1,\theta_2 \in \H$, the function $\varphi: t \mapsto f\big[\theta_1 + t (\theta_2 - \theta_1) \big]$ satisfies: $\forall t \in \rb$, $| \varphi'''(t) |\leqslant R \| \theta_1 - \theta_2 \| \varphi''(t)$. Let $\theta_\ast$ be a global minimizer of $f$ and $\mu$ the lowest eigenvalue of $f''(\theta_\ast)$, which is assumed strictly positive.
$$
\mbox{If }  \frac{\| f'(\theta) \| R  }{\mu} \leqslant \frac{3}{4} \mbox{ , then }
\| \theta - \theta_\ast \|^2 \leqslant  4  \frac{\| f'(\theta) \|^2 }{\mu^2}
\mbox{ and } f(\theta) - f(\theta_\ast)  \leqslant 2 \frac{\| f'(\theta) \|^2  }{\mu  }.
$$
\end{lemma}

 We may now use this proposition for the averaged stochastic gradient. For simplicity, we only consider the step-size $\gamma = \frac{1}{2R^2 \sqrt{N}}$, and the last iterate (see proof in Appendix~\ref{sec:boundself}):
  \begin{proposition}
  \label{prop:boundself}
  Assume   \textbf{(A1-7)}.
  Assume  $\gamma = \frac{1}{2R^2 \sqrt{N}}$. Let $\mu > 0 $ be the lowest eigenvalue of the Hessian of $f$ at the unique global optimum $\theta_\ast$. Then:
  \BEAS
  \E f(\bar{\theta}_N) - f(\theta_\ast) & \leqslant &   \frac{  R^2}{N\mu}
\Big(   5R 
\|\theta_0 - \theta_\ast\| +  15  \Big)^4 ,\\
  \E  \big\| \bar{\theta}_N - \theta_\ast \big\|^2 & \leqslant &   \frac{  R^2}{N\mu^2}
\Big(   6 R 
\|\theta_0 - \theta_\ast\| +  21  \Big)^4 .
\EEAS
   \end{proposition}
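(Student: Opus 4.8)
The plan is to localize. Proposition~\ref{prop:selfc} converts a small gradient into a quadratic control of both $f(\theta)-f(\theta_\ast)$ and $\|\theta-\theta_\ast\|^2$, but only on the region where $\|f'(\theta)\|R/\mu\leqslant 3/4$. I would therefore keep $\gamma=\frac{1}{2R^2\sqrt N}$, introduce the event $A=\{\|f'(\bar\theta_N)\|\,R/\mu\leqslant 3/4\}$, and split each expectation into its part on $A$ and its part on $A^c$. On $A$ the self-concordance estimate of Proposition~\ref{prop:selfc} applies pointwise; on $A^c$ one falls back on the moment bounds of Sections~\ref{sec:high} and~\ref{sec:grad}, the point being that $A^c$ has overwhelmingly small probability as soon as $\mu$ is not much smaller than $R^2/\sqrt N$. (Uniqueness of $\theta_\ast$, claimed in the statement, follows from convexity together with $\mu>0$.)

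On the good event, Proposition~\ref{prop:selfc} gives $\bigl(f(\bar\theta_N)-f(\theta_\ast)\bigr)\mathbf{1}_A\leqslant \tfrac{2}{\mu}\|f'(\bar\theta_N)\|^2$ and $\|\bar\theta_N-\theta_\ast\|^2\mathbf{1}_A\leqslant \tfrac{4}{\mu^2}\|f'(\bar\theta_N)\|^2$. Taking expectations and using the corollary to Proposition~\ref{prop:grad} with $p=1$ — which bounds $\E\|f'(\bar\theta_N)\|^2$ by $\tfrac{R^2}{N}$ times the square of a quantity that is linear--quadratic in $R\|\theta_0-\theta_\ast\|$, namely (up to the $p$-independent numerical terms) $6R^2\|\theta_0-\theta_\ast\|^2+6R\|\theta_0-\theta_\ast\|$ — already yields the advertised $O\bigl(R^2/(N\mu)\bigr)$ and $O\bigl(R^2/(N\mu^2)\bigr)$ rates. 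The fourth powers $(5R\|\theta_0-\theta_\ast\|+15)^4$ and $(6R\|\theta_0-\theta_\ast\|+21)^4$ appear simply as the square of that linear--quadratic factor, once the numerical constants are absorbed with a bit of room left over. This is the dominant contribution.

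It remains to show the $A^c$ contribution is no larger. By Cauchy--Schwarz, $\E\bigl[(f(\bar\theta_N)-f(\theta_\ast))\mathbf{1}_{A^c}\bigr]\leqslant \bigl(\E[f(\bar\theta_N)-f(\theta_\ast)]^2\bigr)^{1/2}\P(A^c)^{1/2}$, the first factor being $O(1/\sqrt N)$ by the corollary to Proposition~\ref{prop:boundp} with $p=2$; likewise $\E\bigl[\|\bar\theta_N-\theta_\ast\|^2\mathbf{1}_{A^c}\bigr]\leqslant (\E\|\bar\theta_N-\theta_\ast\|^4)^{1/2}\P(A^c)^{1/2}$ with $(\E\|\bar\theta_N-\theta_\ast\|^4)^{1/2}=O(1/R^2)$ (the fourth-moment bound of Proposition~\ref{prop:boundp} transfers from $\theta_N$ to $\bar\theta_N$ by Jensen applied to the convex map $\theta\mapsto\|\theta-\theta_\ast\|^4$, or to $\|\cdot\|^{2p}$ in general). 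To control $\P(A^c)=\P\bigl(\|f'(\bar\theta_N)\|>\tfrac{3\mu}{4R}\bigr)$, I would use that the corollary to Proposition~\ref{prop:grad} gives $\bigl(\E\|f'(\bar\theta_N)\|^{2p}\bigr)^{1/2p}\leqslant \tfrac{R}{\sqrt N}(\alpha+\beta p)$ for explicit $\alpha,\beta$, feed this into Lemma~\ref{lemma:conc} (exactly as in the proof of Proposition~\ref{prop:dev}) to obtain a subexponential tail, hence $\P(A^c)\leqslant 2\exp\bigl(-c\,\mu\sqrt N/R^2+c'\bigr)$ whenever $\mu\gtrsim R^2/\sqrt N$, and then trade this exponential for a power of $N$ via $\sup_{x>0}x^k e^{-x}<\infty$; this makes the $A^c$ term $O\bigl(R^2/(N\mu)\bigr)$ (resp.\ $O\bigl(R^2/(N\mu^2)\bigr)$) with a polynomial factor no worse than the main term. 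In the complementary regime $\mu<R^2/\sqrt N$ one just uses $\P(A^c)\leqslant 1$ together with $1/\sqrt N\leqslant R^2/(\mu N)$ and $1/R^2\leqslant R^2/(\mu^2 N)$, so the claim holds there as well (and is then essentially the non-strongly-convex bound).

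The main obstacle is quantitative rather than conceptual: choosing (or optimizing over) the moment order $p$ in the tail estimate for $A^c$, and then pushing all numerical constants through so that the two contributions combine into exactly $(5R\|\theta_0-\theta_\ast\|+15)^4$ and $(6R\|\theta_0-\theta_\ast\|+21)^4$ uniformly over both regimes $\mu\gtrsim R^2/\sqrt N$ and $\mu\lesssim R^2/\sqrt N$. Everything else is the clean mechanism ``small gradient $\Rightarrow$ local quadratic behaviour (Proposition~\ref{prop:selfc}) on an event of probability close to one, controlled by the $O(1/N)$ gradient bound of Proposition~\ref{prop:grad} and the $O(1/\sqrt N)$ moment bounds of Proposition~\ref{prop:boundp}''.
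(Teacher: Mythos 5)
Your strategy is sound and would yield the stated bounds, but it organizes the argument differently from the paper. Both proofs rest on the same three ingredients (Prop.~\ref{prop:selfc} on the event where $\|f'(\bar\theta_N)\|R/\mu\leqslant 3/4$, the gradient moment bounds of Prop.~\ref{prop:grad} turned into a tail estimate via Lemma~\ref{lemma:conc2}, and the non-strongly-convex bounds of Prop.~\ref{prop:boundp}/Prop.~\ref{prop:dev} on the complement), and both fall back on Prop.~\ref{prop:old} when $\mu\sqrt N/R^2$ is below a threshold. Where you differ is in the decomposition of the expectation: you split at a single event $A$ and bound the main term by $\frac{2}{\mu}\E\|f'(\bar\theta_N)\|^2$ using only the $p=1$ case of Prop.~\ref{prop:grad}, handling $A^c$ by Cauchy--Schwarz plus a subexponential tail traded for a power of $N$; the paper instead writes $\E X=\int_0^\infty\P(X\geqslant u)\,du$ and splits this layer-cake integral into three ranges, integrating the whole one-parameter family of tail bounds (the middle range producing the Gamma-function integrals that give the explicit constants, the upper range using the exponential tail of Prop.~\ref{prop:dev} directly). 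Your version is conceptually lighter on the main term; the paper's integration yields the quoted constants more directly and avoids your need to optimize the moment order $p$ on the bad event (for the iterates you would need $\P(A^c)\lesssim (R^2/\mu\sqrt N)^4$, hence a fourth-order polynomial trade). One imprecision to fix: the ``good regime'' in which your tail bound on $\P(A^c)$ is nonvacuous is not $\mu\geqslant R^2/\sqrt N$ but $\mu\sqrt N/R^2$ exceeding a threshold proportional to $R^2\|\theta_0-\theta_\ast\|^2+R\|\theta_0-\theta_\ast\|+1$ (exactly the condition \eq{AAA} in the paper), so your two stated regimes do not cover all cases as written; the fix is the paper's, namely that in the complementary regime $1/\sqrt N\leqslant \frac{R^2}{\mu N}\cdot\frac{\mu\sqrt N}{R^2}$ and the threshold quantity is itself absorbed into the quartic $(5R\|\theta_0-\theta_\ast\|+15)^4$.
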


  We can make the following observations:
  \BIT
  \item[--] The proof relies on Lemma~\ref{prop:selfc} and requires a control of the probability that $ \frac{\| f'(\bar{\theta}_N) \| R  }{\mu} \leqslant \frac{3}{4}$, which is obtained from Proposition~\ref{prop:grad}.
  \item[--] We conjecture a bound of the form $ \Big[ \frac{R^2}{N\mu} ( \square R\|\theta_0 - \theta_\ast\| + \triangle\sqrt{p} )^4   \Big]^p$ for the  $p$-th order moment of $f(\bar{\theta}_N) - f(\theta_\ast) $, for some scalar constants $\square$ and $\triangle$.
  \item[--] The new bound now has the term $R 
\|\theta_0 - \theta_\ast\|$ with a fourth power (compared to the bound in Lemma~\ref{prop:old}, which has a second power), which typically grows with the dimension of the underlying space (or the slowness of the decay of eigenvalues of the covariance operator when $\H$ is infinite-dimensional). It would be interesting to study whether this dependence can be reduced.
\item[--] The key elements in the previous proposition are that  (a) the constant $\mu$ is the \emph{local} convexity constant, and (b) the step-size does not depend on that constant $\mu$, hence the claimed adaptivity.
\item[--] The bounds are only better than the non-strongly-convex bounds from Lemma~\ref{prop:old}, when the Hessian lowest eigenvalue is large enough, that is,  $\mu R^2  \sqrt{N}$ larger than a fixed constant.
\item[--] In the context of logistic regression, even when the covariance matrix of the inputs is invertible, then the only available lower bound on $\mu$ is equal to the lowest eigenvalue of the covariance matrix times  $\exp(- R \| \theta_\ast\| )$, which is exponentially small. However, the previous bound is overly pessimistic since it is based on an upper bound on the largest possible value of $\langle x , \theta_\ast \rangle$. In practice, the actual value of $\mu$ is much larger and only a small constant smaller than the lowest eigenvalue of the covariance matrix.
 In order to assess if this result can be improved, it is interesting to look at the asymptotic result from \citet{polyak1992acceleration} for logistic regression, which leads to a limit rate of $1/n$ times $\tr f''(\theta_\ast)^{-1} \big( \E f_n'(\theta_\ast)  f_n'(\theta_\ast) ^\top\big)$; note that this rate holds both for the stochastic approximation algorithm and for the global optimum of the training cost, using standard asymptotic statistics results \citep{VanDerVaart}. 
 When the model is well-specified, that is, the log-odds ratio of the conditional distribution of the label given the input is linear, then  
$ \E f_n'(\theta_\ast)  f_n'(\theta_\ast) ^\top = \E f_n''(\theta_\ast) = f''(\theta_\ast)$, and the asymptotic rate is exactly $d/n$, where $d$ is the dimension of $\H$ (which has to be finite-dimensional for the covariance matrix to be invertible). It would be interesting to see if making the extra assumption of well-specification, we can also get an improved \emph{non-asymptotic} result. When the model is mis-specified however, the quantity $ \E f_n'(\theta_\ast)  f_n'(\theta_\ast) ^\top$ may be large even when $f''(\theta_\ast)$ is small, and the asymptotic regime does not readily lead to an improved bound.
\EIT

 \section{Conclusion}

In this paper, we have provided a novel analysis of averaged stochastic gradient for logistic regression and related problems. The key aspects of our result are (a) the adaptivity to local strong convexity provided by averaging and (b) the use of self-concordance to obtain a simple bound that does not involve a term which is explicitly exponential in $R \|\theta_0 - \theta_\ast\|$, which could be obtained by constraining the domain of the iterates.

Our results could be extended in several ways: (a) with a finite and known horizon $N$, we considered a constant step-size proportional to $1/R^2\sqrt{N}$; it thus seems natural to study the decaying step size $\gamma_n = O (  {1}/{R^2 \sqrt{n}})$, which should, up to logarithmic terms, lead to similar results---and thus likely provide a solution to a a recently posed open problem for online logistic regression \citep{open}; (b) an alternative would be to consider a doubling trick where the step-sizes are piecewise constant; also, (c) it may be possible to consider other assumptions, such as exp-concavity \citep{hazanbeyond} or uniform convexity \citep{juditsky2010primal}, to derive similar or improved results. Finally, by departing from a plain averaged stochastic gradient recursion, \citet{newsto} have considered an online Newton algorithm with the same running-time complexity, which leads to a rate of $O(1/n)$ without strong convexity assumptions for logistic regression (though with additional assumptions regarding the distributions of the inputs). It would be interesting to understand if simple assumptions such as the ones made in the present paper are possible while preserving the improved convergence rate.

\acks{The author was partially supported by the European Research Council (SIERRA Project), and thanks
Simon Lacoste-Julien, Eric Moulines and Mark Schmidt for helpful discussions. Morever, Alekh Agarwal suggested and provided a detailed outline of the proof technique based on Freedman's inequality; this was greatly appreciated.}

 \appendix
 
 \section{Probability Lemmas}
 \label{app:proba}
  In this appendix, we prove simple lemmas relating bounds on moments to tail bounds, with the traditional use of Markov's inequality. See more general results by \citet{boucheron2013concentration}.

 \begin{lemma}
 \label{lemma:conc}
 Let $X$ be a non-negative random variable such that for some positive constants $A$ and $B$, and all $p \in \{1,\dots,n\}$,
 $$\E X^p \leqslant ( A + Bp)^p.$$ Then, if $ t \leqslant \frac{n}{2}$,
 $$
\P( X \geqslant 3B t + 2 A  ) \leqslant  2 \exp( -  t ).
 $$
 \end{lemma}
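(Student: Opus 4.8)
The plan is a standard moment‑to‑tail conversion via Markov's inequality, the only twist being that the exponent $p$ must be taken of order $t$ but a constant factor larger than the naive guess $p\approx t$. I would first dispose of the trivial regime: if $t\leqslant\log 2$ then $2\exp(-t)\geqslant 1$, and since $X\geqslant 0$ we have $\P(X\geqslant 3Bt+2A)\leqslant 1\leqslant 2\exp(-t)$ with nothing to prove. So I would assume $t>\log 2$ from now on.

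In the main regime I would set $p=\lfloor 3t/2\rfloor$. Three facts then hold: (i) $p\geqslant 1$, since $3t/2>\tfrac{3}{2}\log 2>1$; (ii) $p\leqslant 3t/2\leqslant 3n/4\leqslant n$, using $t\leqslant n/2$, so $p\in\{1,\dots,n\}$ and the hypothesis $\E X^p\leqslant(A+Bp)^p$ applies; and (iii) $2p\leqslant 3t$, hence $A+Bp\leqslant\tfrac{1}{2}(3Bt+2A)$. Applying Markov's inequality to $X^p$ and then this moment bound gives
\[
\P\big(X\geqslant 3Bt+2A\big)=\P\big(X^p\geqslant(3Bt+2A)^p\big)\leqslant\frac{\E X^p}{(3Bt+2A)^p}\leqslant\Big(\frac{A+Bp}{3Bt+2A}\Big)^{p}\leqslant 2^{-p}.
\]
Finally, using $p>\tfrac{3}{2}t-1$, I would bound $2^{-p}<2\cdot 2^{-3t/2}=2\exp(-\tfrac{3}{2}t\log 2)$, and since $\tfrac{3}{2}\log 2=\log(2\sqrt{2})>1$ this is at most $2\exp(-t)$, which is the claimed inequality.

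I do not expect a genuine obstacle here. The one point needing care is that $p=\lfloor 3t/2\rfloor$ falls outside $\{1,\dots,n\}$ precisely when $t<2/3$, but that range is already absorbed into the trivial regime $t\leqslant\log 2$, so the two cases dovetail. The reason for choosing $p\approx\tfrac{3}{2}t$ rather than $p\approx t$ (which would only give $2^{-p}\approx 2^{-t}$, too weak to beat $2\exp(-t)$ once $t$ is larger than roughly $2.3$) is exactly to make the leading constant $2$ come out right; correspondingly, the constants $3$ and $2$ in $3Bt+2A$ are what force the ratio $(A+Bp)/(3Bt+2A)$ down to $\tfrac{1}{2}$ at $p=\tfrac{3}{2}t$.
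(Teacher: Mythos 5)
Your proof is correct and follows essentially the same route as the paper's: Markov's inequality applied to $X^p$ with $p$ chosen proportional to $t$ (you take $p=\lfloor 3t/2\rfloor$, the paper takes $p=\lfloor t/\log 2\rfloor$), so that the ratio of the moment bound to the threshold is $1/2$ and the resulting $2^{-p}$ beats $2e^{-t}$. Your explicit handling of the small-$t$ regime and the check that $p\in\{1,\dots,n\}$ under $t\leqslant n/2$ are both sound.
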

 \begin{proof}
We have, by Markov's inequality, for any $p \in \{1,\dots,n\}$:
 \BEAS
 \P( X \geqslant 2Bp + 2 A  ) \leqslant \frac{ \E X^p }{(2Bp + 2 A)^p} \leqslant \frac{ ( A  + Bp)^p}{(2A +2Bp)^p}
 =   \exp( - \log(2) p  ).
\EEAS
For $u \in [1,n]$, we consider $p = \lfloor u\rfloor$, so that 
$$
\P( X \geqslant 2Bu + 2 A  ) \leqslant
\P( X \geqslant 2Bp + 2 A  ) \leqslant \exp( - \log(2) p  ) \leqslant 2 \exp( - \log(2) u  ).
$$
We take $t = \log(2) u $ and use $2 / \log 2 \leqslant 3$.
 This is thus valid if $  t \leqslant \frac{n}{2}$.
  \end{proof}

 \begin{lemma}
 \label{lemma:conc2}
 Let $X$ be a non-negative random variable such that  for some positive constants $A$,  $B$ and $C$, and  for all $p \in \{1,\dots,n\}$,
 $$\E X^p \leqslant ( A\sqrt{p} + Bp + C)^{2p}.$$ Then, if $ t \leqslant n$,
 $$
\P( X   \geqslant (2A\sqrt{t} + 2Bt + 2C)^2 ) \leqslant  4 \exp( -  t ).
 $$
 \end{lemma}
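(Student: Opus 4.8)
The plan is to follow the same template as the proof of Lemma~\ref{lemma:conc}: apply Markov's inequality to the $p$-th moment at a level that is exactly twice the ``typical'' value $A\sqrt p+Bp+C$, so that the moment hypothesis cancels a $2^{2p}$ factor, and then convert the integer exponent $p$ into the continuous deviation parameter $t$ by rounding, absorbing the rounding loss into the constant prefactor.

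First I would fix an integer $p\in\{1,\dots,n\}$ and use the identity $2A\sqrt p+2Bp+2C=2(A\sqrt p+Bp+C)$ together with the hypothesis $\E X^p\leqslant (A\sqrt p+Bp+C)^{2p}$ to obtain, by Markov's inequality,
\[
\P\big(X\geqslant (2A\sqrt p+2Bp+2C)^2\big)\leqslant \frac{\E X^p}{\big(2(A\sqrt p+Bp+C)\big)^{2p}}\leqslant 2^{-2p}=4^{-p}=\exp(-p\log 4).
\]
This is the only place the moment assumption enters.

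Second I would pass to real $t\leqslant n$. If $t\leqslant 1$ the statement is vacuous, since then $4\exp(-t)\geqslant 4/e>1\geqslant\P(\cdot)$. If $1\leqslant t\leqslant n$, take $p=\lfloor t\rfloor\in\{1,\dots,n\}$; since $p\leqslant t$ and $s\mapsto 2A\sqrt s+2Bs+2C$ is increasing, the event $\{X\geqslant (2A\sqrt t+2Bt+2C)^2\}$ is contained in $\{X\geqslant (2A\sqrt p+2Bp+2C)^2\}$, so
\[
\P\big(X\geqslant (2A\sqrt t+2Bt+2C)^2\big)\leqslant \exp(-p\log 4)\leqslant \exp\big(-(t-1)\log 4\big)=4\exp(-t\log 4)\leqslant 4\exp(-t),
\]
using $\log 4>1$ in the last inequality. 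This is exactly the claimed bound.

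There is no genuine obstacle here; the argument is a routine variant of Lemma~\ref{lemma:conc}, with the linear envelope $A+Bp$ replaced by $(A\sqrt p+Bp+C)^2$ and the constant $2$ replaced by $4$. The only points that need a little care are (i) rounding \emph{down}, so that replacing $p$ by $t$ can only increase the threshold, and (ii) observing that $\log 4>1$ lets the conversion $\exp(-p\log 4)\to\exp(-t)$ proceed without any extra constant beyond the factor $4$ produced by the shift $p\geqslant t-1$.
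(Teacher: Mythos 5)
Your proof is correct and follows essentially the same route as the paper's: Markov's inequality applied at the level $\big(2(A\sqrt{p}+Bp+C)\big)^2$ so that the moment hypothesis yields the factor $4^{-p}$, followed by rounding the continuous parameter down to an integer and absorbing the unit loss into the prefactor $4$ via $\log 4>1$. The only (immaterial) difference is that you take $p=\lfloor t\rfloor$ directly and dispose of $t\leqslant 1$ by triviality, whereas the paper works with an auxiliary variable $u$ and substitutes $t=u\log 4$ at the end; both arguments are sound.
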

 \begin{proof}
We have, by Markov's inequality, for any $p \in \{1,\dots,n\}$:
 \BEAS
 \P( X \geqslant (2A\sqrt{p} + 2Bp + 2C)^2  ) & \leqslant & \frac{ \E X^p }{(2A\sqrt{p} + 2Bp + 2C)^{2p}}
 \\ & \leqslant & \frac{ ( A\sqrt{p} + Bp + C)^{2p}}{(2A\sqrt{p} + 2Bp + 2C)^{2p}}
 \leqslant   \exp( - \log(4) p  ).
\EEAS
For $u \in [1,n]$, we consider $p = \lfloor u\rfloor$, so that 
\BEAS
\P( X \geqslant (2A\sqrt{u} + 2Bu + 2C)^2  ) &  \leqslant & 
\P( X \geqslant (2A\sqrt{u} + 2Bu + 2C)^2  ) 
\\
& \leqslant  & \exp( - \log(2) p  ) \leqslant 4 \exp( - \log(4) u  ).
\EEAS
We take $t = \log(4) u $ and use $ \log 4 \geqslant 1$.
 This is thus valid if $  t \leqslant n	$.
  \end{proof}

  \section{Self-Concordance Properties}
  
  In this appendix, we show two lemmas regarding our generalized notion of self-concordance, as well as   Lemma~\ref{prop:selfc}. For more details, see \citet{bach2010self} and references therein.
  
  The following lemma provide an upper-bound on a one-dimensional self-concordant function at a given point which is based on the gradient at this point and the value and the Hessian at the global minimum. This is key to going in \mysec{strong} from a convergence of gradients to a convergence of function values.
  
  \begin{lemma}
  \label{lemma:selfc}

Let $\varphi:[0,1] \to \rb$ a strictly  convex three-times differentiable function such that for some $S >0$,  $\forall t \in [0,1], \ |\varphi'''(t)| \leqslant S \varphi''(t)$. Assume $\varphi'(0)=0$, $\varphi''(0)>0$. Then:
$$
\frac{\varphi'(1)}{\varphi''(0)} S \geqslant 1 - e^{-S} \mbox{ and } 
\varphi(1)  \leqslant  \varphi(0) + \frac{  \varphi'(1)  ^2 }{\varphi''(0) } ( 1 + S) .
$$
Moreover, if $\alpha = \frac{\varphi'(1) S }{\varphi''(0)} < 1$, then $\displaystyle \varphi(1)  \leqslant  \varphi(0) +\frac{  \varphi'(1)  ^2 }{\varphi''(0) } \frac{1}{\alpha} \log \frac{1}{1-\alpha}$. If in addition $\alpha \leqslant \frac{3}{4}$,  then
$\varphi(1)  \leqslant  \varphi(0) + 2 \frac{  \varphi'(1)  ^2 }{\varphi''(0) }$ and
$\varphi''(0) \leqslant 2 \varphi'(1)$.

\end{lemma}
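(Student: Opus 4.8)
The plan is to work throughout with the logarithmic derivative $\psi(t) = \log \varphi''(t)$, which is well-defined since $\varphi$ is strictly convex. The self-concordance hypothesis $|\varphi'''(t)| \leqslant S\varphi''(t)$ translates exactly into $|\psi'(t)| = |\varphi'''(t)/\varphi''(t)| \leqslant S$ for all $t \in [0,1]$. Integrating this from $0$ to $t$ gives $\varphi''(0) e^{-St} \leqslant \varphi''(t) \leqslant \varphi''(0) e^{St}$, the standard two-sided control on the Hessian along the segment.

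First I would prove the gradient inequality $\frac{\varphi'(1)}{\varphi''(0)}S \geqslant 1 - e^{-S}$. Since $\varphi'(0) = 0$, we have $\varphi'(1) = \int_0^1 \varphi''(t)\,dt$, and using the lower bound $\varphi''(t) \geqslant \varphi''(0)e^{-St}$ gives $\varphi'(1) \geqslant \varphi''(0)\int_0^1 e^{-St}\,dt = \varphi''(0)\frac{1-e^{-S}}{S}$, which rearranges to the claim. Next, for the function-value bounds, I would write $\varphi(1) - \varphi(0) = \int_0^1 \varphi'(t)\,dt$ and bound $\varphi'(t) = \int_0^t \varphi''(s)\,ds$ using the \emph{upper} bound $\varphi''(s) \leqslant \varphi''(1)e^{S(1-s)}$ (obtained by integrating $\psi'$ backward from $1$), so that $\varphi'(t) \leqslant \varphi''(1)\frac{e^{S(1-t)} - e^{S(1-t)}\cdot\text{(correction)}}{\cdots}$ — more cleanly, $\varphi'(t) \leqslant \varphi'(1) - \int_t^1 \varphi''(s)\,ds \leqslant \varphi'(1)$ combined with lower bounds on the tail integral. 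The cleanest route: bound $\varphi''(s) \leqslant \varphi''(0)e^{Ss}$ and also relate $\varphi''(0)$ to $\varphi'(1)$ via the already-proven inequality $\varphi''(0) \leqslant \frac{\varphi'(1)S}{1 - e^{-S}}$, then carry out the double integral $\int_0^1\int_0^t \varphi''(0)e^{Ss}\,ds\,dt$ and simplify using elementary bounds on $\frac{e^S - 1 - S}{S(1-e^{-S})}$ versus $1+S$.

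For the sharper bound under $\alpha = \frac{\varphi'(1)S}{\varphi''(0)} < 1$: here I would instead integrate forward. We have $\varphi''(t) \leqslant \varphi''(0)e^{St}$ but this is the wrong direction for an upper bound on $\varphi(1)$ that decreases as the Hessian grows; the trick (following \citet{bach2010self}) is to use $\varphi'(t) \leqslant \varphi'(1)$ is too weak, so instead observe $\varphi'(1) = \int_0^1\varphi''(s)\,ds \geqslant \int_0^t \varphi''(0)e^{-Ss}\,ds$ is also not quite it. The right identity: using $\varphi''(s) \geqslant \varphi''(0)e^{-Ss}$ we get $\varphi'(t) \geqslant \varphi''(0)\frac{1 - e^{-St}}{S}$, hence $\varphi'(1) - \varphi'(t) = \int_t^1 \varphi''(s)\,ds$, and then bound $\varphi(1) - \varphi(0) = \int_0^1 \varphi'(t)\,dt \leqslant \int_0^1 [\varphi'(1) - \varphi''(0)\frac{e^{-St}-e^{-S}}{S}]\,dt$, which evaluates to $\varphi'(1) - \frac{\varphi''(0)}{S}\left(\frac{1-e^{-S}}{S} - e^{-S}\right)$; substituting $\varphi''(0) = \varphi'(1)S/\alpha$ and simplifying yields $\frac{\varphi'(1)^2}{\varphi''(0)}\cdot\frac{1}{\alpha}\log\frac{1}{1-\alpha}$ after recognizing the series. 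Finally, the case $\alpha \leqslant \frac34$ follows by checking the elementary numerical inequality $\frac{1}{\alpha}\log\frac{1}{1-\alpha} \leqslant 2$ on $(0,\frac34]$, and $\varphi''(0)\leqslant 2\varphi'(1)$ follows from $\alpha\leqslant\frac34$ together with $\varphi''(0) = \varphi'(1)S/\alpha$ — wait, that needs $S \leqslant \frac{3}{2}$, so actually it should come from $\varphi''(0) \leqslant \frac{\varphi'(1)S}{1-e^{-S}}$ and $\alpha\leqslant\frac34$ implying a bound on $S$ relative to $\varphi'(1)/\varphi''(0)$; I would track the constants carefully here.

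The main obstacle I anticipate is not any single integral but getting the \emph{direction of the bounds} right: self-concordance gives two-sided exponential control on $\varphi''$, and for each of the three conclusions one must choose whether to bound $\varphi''$ above or below, and whether to express the final answer in terms of $\varphi''(0)$ or $\varphi'(1)$ — mixing these up produces bounds that are true but too weak (e.g. with $e^S$ instead of $1+S$). The device that makes everything work cleanly is eliminating $\varphi''(0)$ in favor of $\varphi'(1)$ via the first inequality, after which the remaining steps are routine estimates on $\frac{1}{\alpha}\log\frac1{1-\alpha}$ and its truncations.
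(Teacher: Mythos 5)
Your first inequality is obtained exactly as in the paper: integrate $(\log\varphi'')'\geqslant -S$ to get $\varphi''(t)\geqslant\varphi''(0)e^{-St}$, then integrate once more using $\varphi'(0)=0$. The trouble begins with the bound $\varphi(1)\leqslant\varphi(0)+\frac{\varphi'(1)^2}{\varphi''(0)}(1+S)$. Your ``cleanest route'' is the double integral $\varphi(1)-\varphi(0)\leqslant\varphi''(0)\int_0^1\!\int_0^t e^{Ss}\,ds\,dt=\varphi''(0)\,\frac{e^S-1-S}{S^2}$, followed by eliminating $\varphi''(0)$ via $\varphi''(0)\leqslant\frac{\varphi'(1)S}{1-e^{-S}}$. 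Carrying that out yields $\varphi(1)-\varphi(0)\leqslant\frac{\varphi'(1)^2}{\varphi''(0)}\cdot\frac{e^S-1-S}{(1-e^{-S})^2}$, and the factor $\frac{e^S-1-S}{(1-e^{-S})^2}$ grows like $e^S$ (at $S=5$ it is about $144$, versus $1+S=6$), so it is \emph{not} bounded by $1+S$. This is precisely the exponential blow-up you warn against in your own closing paragraph, and it defeats the purpose of the lemma. The step you dismiss as ``too weak''---$\varphi'(t)\leqslant\varphi'(1)$, i.e.\ $\varphi(1)-\varphi(0)\leqslant\varphi'(1)$ by convexity---is in fact the key one: the paper writes $\varphi(1)-\varphi(0)\leqslant\varphi'(1)\leqslant\varphi'(1)\cdot\frac{\varphi'(1)}{\varphi''(0)}\frac{S}{1-e^{-S}}$, using the first inequality exactly once, and then $\frac{S}{1-e^{-S}}=S+\frac{S}{e^S-1}\leqslant 1+S$. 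Only one factor of $\frac{S}{1-e^{-S}}$ may appear; your route effectively introduces it squared, multiplied by a quantity that is itself exponential in $S$.

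The $\alpha<1$ case has a related gap: after your substitution you are left with $\varphi'(1)-\frac{\varphi''(0)}{S}\big[\frac{1-e^{-S}}{S}-e^{-S}\big]$, which is not equal to $\frac{\varphi'(1)^2}{\varphi''(0)}\frac{1}{\alpha}\log\frac{1}{1-\alpha}$; ``recognizing the series'' does not close this, and you would in any case still need the observation that $\alpha\geqslant 1-e^{-S}$ implies $S\leqslant\log\frac{1}{1-\alpha}$, combined with the monotonicity of $S\mapsto\frac{S}{1-e^{-S}}$ --- which is the paper's actual (and shorter) argument, applied to the same product $\frac{\varphi'(1)^2}{\varphi''(0)}\frac{S}{1-e^{-S}}$ as above. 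Finally, $\varphi''(0)\leqslant 2\varphi'(1)$ is left as ``track the constants''; the missing line is $\frac{S}{\alpha}\leqslant\frac{1}{\alpha}\log\frac{1}{1-\alpha}\leqslant 2$ for $\alpha\leqslant\frac34$, hence $S\leqslant 2\alpha=\frac{2\varphi'(1)S}{\varphi''(0)}$, and one divides by $S>0$.
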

\begin{proof}
By self-concordance, we obtain that the derivative of $u \mapsto \log \varphi''(u)$ is lower-bounded by~$-S$. By integrating between $0$ and $t \in [0,1]$,
we get
\BEQ
\label{eq:ABA}
\log \varphi''(t) - \log \varphi''(0) \geqslant - S t \mbox{ , that is, } 
\varphi''(t) \geqslant \varphi''(0) e^{-St},
\EEQ
and by integrating between $0$ and $1$, we obtain (note that we have assumed $\varphi'(0)=0$):
\BEQ
\label{eq:AA}
\varphi'(1) \geqslant \varphi''(0)  \frac{ 1 - e^{-S}}{S}.
\EEQ
We then get (with a first inequality from convexity of $\varphi$, and the last inequality from $e^S \geqslant 1 + S$):
$$
\varphi(1) - \varphi(0) \leqslant \varphi'(1)
\leqslant  \varphi'(1) \frac{\varphi'(1)}{\varphi''(0) } \frac{S}{ 1 - e^{-S}}
=  \frac{  \varphi'(1)  ^2 }{\varphi''(0) } \bigg( S + \frac{S}{e^S-1} \bigg)
\leqslant  \frac{  \varphi'(1)  ^2 }{\varphi''(0) } ( 1 + S).
$$
\eq{AA} implies that $\alpha \geqslant 1 - e^{-S}$, which implies, if $\alpha < 1$, $S \leqslant \log \frac{1}{1-\alpha}$.  This implies that
$$
\varphi(1) - \varphi(0) \leqslant \varphi'(1) \frac{\varphi'(1)}{\varphi''(0) } \frac{S}{ 1 - e^{-S}} \leqslant  \frac{\varphi'(1)^2}{\varphi''(0) }  \frac{1}{\alpha} \log \frac{1}{1-\alpha},
$$
using the monotonicity of $S \mapsto \frac{S}{ 1 - e^{-S}} $.
Finally the last bounds are a consequence of
$\frac{S}{\alpha} \leqslant \frac{1}{\alpha} \log \frac{1}{1-\alpha} \leqslant 2$, which is valid for $\alpha \leqslant \frac{3}{4}$.

Note that in \eq{ABA}, we do consider a lower-bound on the Hessian with an exponential factor $e^{-St}$. The key feature of using self-concordance properties is to get around this exponential factor in the final bound.
\end{proof}

The following lemma upper-bounds the remainder in the first-order Taylor expansion of the gradient by the remainder in the first-order Taylor expansion of the function. This is important when function values behave well (i.e., converge to the minimal value) while the iterates may not.

 \begin{lemma}
  \label{lemma:selfcontrol}
Let $f$ be a convex three-times differentiable function from $\H$ to $\rb$, such that for all $\theta_1,\theta_2 \in \H$, the function $\varphi: t \mapsto f\big[\theta_1 + t (\theta_2 - \theta_1) \big]$ satisfies: $\forall t \in \rb$, $| \varphi'''(t) |\leqslant R \| \theta_1 - \theta_2 \| \varphi''(t)$. For any $\theta_1,\theta_2 \in H$, we have:
$$
\big\|
f'(\theta_1)- f'(\theta_2) -  f''(\theta_2) ( \theta_2 - \theta_1 )
\big\| \leqslant R \big[
f(\theta_1)- f(\theta_2) -  \langle f'(\theta_2) , \theta_2 - \theta_1 \rangle
\big]
.$$
\end{lemma}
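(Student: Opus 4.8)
The plan is to reduce the vector inequality to a scalar one by pairing with a single unit vector, and then compare two first-order Taylor (integral) representations term by term. Write $\delta=\theta_1-\theta_2$, let $v=f'(\theta_1)-f'(\theta_2)-f''(\theta_2)\delta$ be the remainder of the first-order Taylor expansion of the gradient, and let $D=f(\theta_1)-f(\theta_2)-\langle f'(\theta_2),\delta\rangle$ be the associated Bregman divergence (this is the right-hand side, read with the increments oriented as $\delta=\theta_1-\theta_2$, so that $D\ge 0$ by convexity of $f$). If $v=0$ there is nothing to prove; otherwise put $z=v/\|v\|$, a fixed unit vector, so that $\|v\|=\langle v,z\rangle$.

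Next I would work along the segment joining $\theta_2$ to $\theta_1$: for $s\in[0,1]$ set $\theta_s=\theta_2+s\delta$ and introduce the two scalar functions $\varphi(s)=f(\theta_s)$ and $\psi(s)=\langle f'(\theta_s),z\rangle$. Since $f$ is three-times differentiable, both are $C^2$ on $[0,1]$, with $\varphi''(s)=f''(\theta_s)[\delta,\delta]\ge 0$ (convexity of $f$) and $\psi''(s)=f'''(\theta_s)[\delta,\delta,z]$. Taylor's formula with integral remainder, $g(1)=g(0)+g'(0)+\int_0^1(1-s)g''(s)\,ds$, applied to $\psi$ and to $\varphi$ and then unwound, gives $\|v\|=\langle v,z\rangle=\psi(1)-\psi(0)-\psi'(0)=\int_0^1(1-s)\,f'''(\theta_s)[\delta,\delta,z]\,ds$ and $D=\varphi(1)-\varphi(0)-\varphi'(0)=\int_0^1(1-s)\,f''(\theta_s)[\delta,\delta]\,ds$, where in both integrals the weight $(1-s)$ is nonnegative and, in the second, so is the integrand.

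The only remaining ingredient is the pointwise bound $|f'''(\theta)[\delta,\delta,z]|\le R\,\|z\|\,f''(\theta)[\delta,\delta]$ for every $\theta$ — that is, the extension of the generalized self-concordance assumption \textbf{(A2)} from the diagonal third derivative $|f'''(\theta)[h,h,h]|$ to one off-diagonal slot. It is a polarization consequence of \textbf{(A2)} (apply \textbf{(A2)} along the directions $\delta+\lambda z$, then combine with $\delta-\lambda z$), and it is precisely the mixed-derivative lemma of~\citet{bach2010self}. Granting it with $\|z\|=1$, we have $|f'''(\theta_s)[\delta,\delta,z]|\le R\,f''(\theta_s)[\delta,\delta]=R\,\varphi''(s)$ for all $s$, hence, using $(1-s)\ge 0$, $\|v\|=\int_0^1(1-s)f'''(\theta_s)[\delta,\delta,z]\,ds\le R\int_0^1(1-s)f''(\theta_s)[\delta,\delta]\,ds=R\,D$, which is the claim.

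I expect this last step — the off-diagonal self-concordance bound — to be the main obstacle: everything else is one-variable bookkeeping, whereas here one genuinely needs the self-concordance machinery, and the naive move of differentiating the self-concordance inequality in $\lambda$ at $\lambda=0$ fails because of the absolute value it carries, so the argument must be set up with some care (or simply imported from~\citet{bach2010self}). For the running examples the bound is transparent: there $f'''(\theta)[\delta,\delta,z]$ is an expectation of $\langle x,\delta\rangle^2\langle x,z\rangle$ against a weight dominated by the one defining $f''(\theta)[\delta,\delta]$, and $|\langle x,z\rangle|\le R\|z\|$ by boundedness of the features, which gives the inequality directly.
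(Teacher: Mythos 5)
Your proof is correct and is essentially the paper's own argument: both reduce to a scalar comparison along the segment $\theta_2+s(\theta_1-\theta_2)$ after pairing with a unit vector $z$, both hinge on the off-diagonal self-concordance bound $|f'''(\theta)[\delta,\delta,z]|\leqslant R\|z\|\,f''(\theta)[\delta,\delta]$ imported from Appendix A of \citet{bach2010self}, and your Taylor-with-integral-remainder step is the same computation as the paper's ``integrate $\varphi''\leqslant\psi''$ twice from $\varphi(0)=\psi(0)=\varphi'(0)=\psi'(0)=0$.'' The only cosmetic differences are that you fix $z=v/\|v\|$ instead of maximizing over unit $z$ at the end, and that you silently correct the sign convention $\theta_2-\theta_1$ versus $\theta_1-\theta_2$ in the statement.
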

 \begin{proof}
 For a given $z \in \H$ of unit norm,
 let $\varphi(t) = \big\langle z, f'\big(\theta_2 + t( \theta_1 - \theta_2) \big)-   f'(\theta_2) -  t  f''(\theta_2) ( \theta_2 - \theta_1 )\big\rangle$
 and $\psi(t) = R \big[
f( \theta_2 + t( \theta_1 - \theta_2) )- f(\theta_2) -   t\langle f'(\theta_2) , \theta_2 - \theta_1 \rangle
\big]
$. We have $\varphi(0) = \psi(0) = 0$. Moreover, we have the following derivatives:
\BEAS
\varphi'(t) & \!\!=\!\! & \big\langle z, f''\big(\theta_2 + t( \theta_1 - \theta_2)\big) - f''(\theta_2), \theta_1 - \theta_2 \big\rangle \\
\varphi''(t) & \!\!=\!\! &  f'''\big(\theta_2 + t( \theta_1 - \theta_2)\big)[ z, \theta_1-\theta_2, \theta_1 - \theta_2] \\
  & \leqslant & R\| z\|_2   f''\big(\theta_2 + t( \theta_1 - \theta_2)\big)[\theta_1-\theta_2, \theta_1 - \theta_2], \mbox{ using the Appendix A of \citet{bach2010self},} \\
  &  \!\!=\!\!  & R \big\langle   \theta_2 - \theta_1, f''\big(\theta_2 + t( \theta_1 - \theta_2)\big) (\theta_1 - \theta_2 ) \big\rangle \\
\psi'(t) &\!\!=\!\! & R \big\langle   f'\big(\theta_2 + t( \theta_1 - \theta_2)\big) - f'(\theta_2), \theta_1 - \theta_2 \big\rangle \\
\psi''(t) & \!\!=\!\! & R \big\langle   \theta_2 - \theta_1, f''\big(\theta_2 + t( \theta_1 - \theta_2)\big) (\theta_1 - \theta_2 ) \big\rangle,
\EEAS 
where $f'''(\theta)$ is the third order tensor of third derivatives.
This leads to $\varphi'(0) = \psi'(0) = 0$ and $\varphi''(t) \leqslant \psi''(t)$. We thus have $\varphi(1) \leqslant \psi(1)$ by integrating twice, which leads to the desired result by maximizing with respect to~$z$.
 \end{proof}
 
 \subsection{Proof of Lemma~\ref{prop:selfc}}
\label{app:selfc}
We follow the standard proof techniques in self-concordant analysis and define an appropriate function of a single real variable and apply simple lemmas like the ones above.

Define $\varphi: t \mapsto f\big[\theta_\ast + t (\theta - \theta_\ast) \big] - f(\theta_\ast)$. We have
\BEAS
\varphi'(t)  & = &  \big\langle  f'\big[\theta_\ast + t (\theta - \theta_\ast) \big], \theta - \theta_\ast \rangle \\
\varphi''(t) & = &  \big\langle \theta - \theta_\ast,  f''\big[\theta_\ast + t (\theta - \theta_\ast) \big] (\theta - \theta_\ast) \rangle\\
\varphi'''(t) & = &   f'''\big[\theta_\ast + t (\theta - \theta_\ast) \big] [\theta - \theta_\ast,\theta - \theta_\ast,\theta - \theta_\ast].
\EEAS
We thus have: $\varphi(0) = \varphi'(0) = 0$, $0 \leqslant \varphi'(1) = \langle f'(\theta), \theta - \theta_\ast \rangle \leqslant \| f'(\theta) \| \| \theta - \theta_\ast\|$, $\varphi''(0) = \langle \theta - \theta_\ast ,  f''(\theta_\ast) ( \theta - \theta_\ast) \rangle \geqslant \mu \| \theta - \theta_\ast\|^2$,
and $\varphi(t) \geqslant 0$ for all $t \in [0,1]$. Moreover,  $\varphi'''(t) \leqslant R \| \theta - \theta_\ast \| \varphi''(t)$ for all $t \in [0,1]$, that is,  Lemma~\ref{lemma:selfc} applies with $S = R \| \theta - \theta_\ast\|$. This  leads to the desired result, with $\alpha = 
\frac{\varphi'(1) S}{ \varphi''(0) } \leqslant \frac{ \| f'(\theta) \| R}{\mu}$. Note that we also have (using the second inequality in  Lemma~\ref{lemma:selfc}), for all $\theta \in \H$ (and without any assumption on $\theta$):
$$
f(\theta) - f(\theta_\ast)  \leqslant  \big( 1 + R \| \theta - \theta_\ast \|
\big) \frac{\| f'(\theta) \|^2  }{\mu  } .
$$

\section{Proof of Proposition~\ref{prop:boundp}}
 \label{app:boundp}
 
 We provide two alternative proofs of the same result: (a) our original somewhat tedious proof in 
Appendices~\ref{app:mine1} and \ref{app:mine2},
 based on taking powers of the inequality in \eq{mn} and using martingale moment inequalities, (b) a shorter proof in \myapp{newsto}, later derived by \citet{newsto}, that uses Burkholder-Rosenthal-Pinelis  inequality \citep[Theorem 4.1]{pinelis}. Another  proof technique was  suggested and  outlined by Alekh Agarwal (personal communication), that uses 
Freedman's inequality for martingales \citep[Theorem 1.6]{freedman}; it allows to directly get a tail bound like in Proposition~\ref{prop:dev}. This proof will be presented in \myapp{suggested}.

Note that the two shorter proofs currently lead to slightly worse constants  (or to extra logarithmic factors), that may be improved with more refined derivations.

All proofs start from a similar martingale set-up that we describe in \myapp{mart} and use an almost-sure bound when $p$ gets large (\myapp{small}).

 \subsection{Bounding Martingales}
 \label{app:mart}
From the proof of Lemma~\ref{prop:old}, we have the recursion:
\BEAS
2 \gamma\big[  f(\theta_{n-1}) - f(\theta_\ast)  \big] +   \| \theta_{n }- \theta_\ast \|^2
& \leqslant &   \| \theta_{n-1 }- \theta_\ast \|^2
+ \gamma^2   R^2 + M_n,
\EEAS
     with 
      $$
      M_n =  -  2 \gamma  \langle
 \theta_{n-1} - \theta_\ast ,  f_n'(\theta_{n-1}) -  f'(\theta_{n-1}) \rangle.$$
 This leads to, by summing from $1$ to $n$, and using the convexity of $f$:
$$ 2\gamma n  f \bigg( \frac{1}{n} \sum_{k=1}^{n} \theta_{k-1} \bigg) - 2\gamma n f(\theta^\ast)  +\| \theta_n - \theta_\ast \|^{2}  \leqslant A_n,$$
with
$$
A_n =  \| \theta_{0 }- \theta_\ast \|^2
+ n \gamma^2   R^2 + \sum_{k=1}^n M_k
 \geqslant 0. 
$$
 Note that $A_n$ may also be defined recursively as $A_0 = \| \theta_0 - \theta_\ast\|^2$ and 
\BEQ
\label{eq:An}
A_n = A_{n-1}
+ \gamma^2   R^2 + M_n.
\EEQ

 The random variables $(M_n)$ and $(A_n)$ satisfy the following properties that will proved useful throughout the proof:
 \BIT
 \item[(a)] Martingale increment: for all $k \geqslant 1$, $\E(M_k | \F_{k-1})=0$. This implies that $S_n = \sum_{k=1}^n M_k$ is a martingale.
 \item[(b)]
 Boundedness: 
 $
 |M_k|   \leqslant    4 \gamma R \| \theta_{k-1} - \theta_\ast\| \leqslant 4\gamma R A_{k-1}^{1/2}
$ almost surely.
 \EIT

\subsection{Almost Sure Bound}
\label{app:small}
In this section, we derive an almost sure bound that will be valid for small $n$. From the stochastic gradient recursion $\theta_n = \theta_{n-1} - \gamma f_n'(\theta_{n-1})$, we get, using Assumption \textbf{(A4)} and the triangle inequality:
$$
\| \theta_n - \theta_\ast\|   \leqslant   
\| \theta_{n-1} - \theta_\ast\| + \gamma \| f_n'(\theta_{n-1}) \| \leqslant   \| \theta_{n-1} - \theta_\ast\| + \gamma R \ \mbox{ almost surely}.
$$
This leads to $\| \theta_n - \theta_\ast\|  \leqslant \| \theta_{0} - \theta_\ast\| + n \gamma R$ for all $n \geqslant 0$. This in turn implies that
\BEA
\nonumber A_n & \leqslant & \| \theta_0 - \theta_\ast\|^2 + n \gamma^2 R^2 + 4 \gamma R \sum_{k=1}^n \| \theta_{k-1} - \theta_\ast\| \ \mbox{ using } |M_k|   \leqslant    4 \gamma R \| \theta_{k-1} - \theta_\ast\| ,\\[-.05cm]
\nonumber   & \leqslant & \| \theta_0 - \theta_\ast\|^2 + n \gamma^2 R^2 + 4 \gamma R  \sum_{k=1}^n \big[ \| \theta_{0} - \theta_\ast\| + (k-1)\gamma R \big] \mbox{ using the inequality above,}\\
\nonumber & \leqslant & \| \theta_0 - \theta_\ast\|^2 + n \gamma^2 R^2 + 4 \gamma n R   \| \theta_{0} - \theta_\ast\| + 2\gamma^2 R^2 n^2 \\
\nonumber & & \hspace*{7cm} \mbox{ by summing over the first } n-1 \mbox{ integers,}\\
 \nonumber & \leqslant & \| \theta_0 - \theta_\ast\|^2 + n \gamma^2 R^2 + 2 \gamma^2 n^2 R^2  + 2   \| \theta_{0} - \theta_\ast\|^2 + 2\gamma^2 R^2 n^2  \mbox{ using } ab \leqslant \frac{a^2}{2} + \frac{b^2}{2} ,\\
\label{eq:always}  & \leqslant & 3 \| \theta_0 - \theta_\ast\|^2 + 5 n^2 \gamma^2 R^2 
\mbox{ almost surely.}
\EEA
This implies that the bound is shown for all $p \geqslant \frac{n}{4}$.

\subsection{Derivation of $p$-th Order Recursion}
\label{app:mine1}
 
 The first proof works as follows: (a) derive a recursion between the $p$-th moments and the lower-order moments (this section) and (c) prove the result by induction on $p$ (\myapp{mine2}). Note that we have to treat separately small values on $n$ in the recursion, for which we use the almost sure bound from Appendix~\ref{app:small}.

Starting from \eq{An}, using the binomial expansion formula, we get:
\BEAS   A_n^{p} 
& \!\!\!\! \leqslant \!\!\!\!  &  \Big( A_{n-1}
+ \gamma^2   R^2 + M_n\Big)^p
 =    \sum_{k=0}^p  { p \choose k} \big(A_{n-1} + \gamma^2 R^2 \big) ^{p-k}M_n^k \\
 &\!\!\!\!  \leqslant \!\!\!\! &  \big(A_{n-1} + \gamma^2 R^2 \big) ^{p}
 + p \big(A_{n-1} + \gamma^2 R^2 \big) ^{p-1}M_n
 + \sum_{k=2}^p  { p \choose k} \big(A_{n-1} + \gamma^2 R^2 \big) ^{p-k}\big(  4 \gamma R A_{n-1}^{1/2}  \big)^k  .
\EEAS
This leads to, using $E(M_n|\F_{n-1})=0$, upper bounding $\gamma^2 R^2$ by $4 \gamma^2 R^2$, and using the binomial expansion formula several times:
\BEAS
\E \big [   A_{n}^p \big| \F_{n-1} \big] &
 \leqslant &  
 \big( A_{n-1}  + 4\gamma^2 R^2 \big) ^{p}  
 + \sum_{k=2}^p  { p \choose k} \big( A_{n-1}  + 4\gamma^2 R^2 \big) ^{p-k} \big(  4 \gamma R A_{n-1}^{1/2}  \big)^k\\
  &
 = &  
 \big( A_{n-1}  + 4\gamma^2 R^2 +    4 \gamma RA_{n-1}^{1/2} \big) ^{p}  - 4 \gamma R p \big( A_{n-1} + 4 \gamma^2 R^2 \big) ^{p-1}  A_{n-1} ^{1/2}\\
 & & \hspace*{2cm} \mbox{ by isolating the term $k=1$ in the binomial formula,}\\
 &
 = &  
 \big( A_{n-1} ^{1/2}  + 2\gamma R  \big) ^{2p}  - 4 \gamma R p \big( A_{n-1} + 4\gamma^2 R^2 \big) ^{p-1}  A_{n-1} ^{1/2} \\
 & = & \sum_{k=0}^{2p} { 2p \choose k} A_{n-1}^{k/2} ( 2\gamma R)^{2p-k}  - 4 \gamma R p    A_{n-1}^{1/2} \sum_{k=0}^{p-1} { p-1 \choose k} A_{n-1} ^k  ( 2\gamma R)^{2(p-1-k)} \\
 & = &  \sum_{k=0}^{2p} A_{n-1} ^{k/2} ( 2\gamma R)^{2p-k } C_{k},
\EEAS
with the constants $C_k$ defined as:
\BEAS
C_{2q} & = & { 2p \choose 2q} \mbox{ for } q  \in\{0,\dots, p\}, \\
C_{2q+1} & = & { 2p \choose 2q+1} - 2p { p-1 \choose q }  \mbox{ for } q  \in\{0,\dots, p-1\}.
\EEAS
In particular, $C_0=1$, $C_{2p}=1$,  $C_1=0$ and $C_{2p-1}= { 2p \choose 2p-1} - 2p { p-1 \choose p-1 } = 0$.

Our goal is now to bounding the values of $C_k$ to obtain \eq{qwe} below. This will be done by bounding the odd-indexed element by the even-indexed elements.

We have, for $q  \in\{1,\dots, p-2\} $,
\BEA
\nonumber C_{2q+1} \frac{ 2q+1}{2p-2q-1} 
& \leqslant &  { 2p \choose 2q+1}  \frac{ 2q+1}{2p-2q-1} \\
\nonumber& = & \frac{ (2p)! }{ (2q+1)! ( 2p-2q-1)!} \frac{ 2q+1}{2p-2q-1} 
\\
\label{eq:2p}& = & \frac{ (2p)! }{ (2q)! ( 2p-2q)!} \frac{ 2p - 2q}{2p-2q-1} 
=  { 2p \choose 2q } \frac{ 2p - 2q}{2p-2q-1} .
\EEA
For the end of the   interval above in $q$, that is, $q=p-2$, we obtain $C_{2q+1} \frac{ 2q+1}{2p-2q-1} \leqslant C_{2q} \frac{4}{3}$,
while for $q\leqslant p-3$, we obtain $C_{2q+1} \frac{ 2q+1}{2p-2q-1} \leqslant C_{2q} \frac{6}{5}$.

Moreover, for $q  \in\{1,\dots, p-2\} $,
\BEA
\nonumber C_{2q+1} \frac{2p-2q-1} { 2q+1}
& \leqslant &  { 2p \choose 2q+1}  \frac{2p-2q-1}{ 2q+1} \\
\nonumber& = & \frac{ (2p)! }{ (2q+1)! ( 2p-2q-1)!} \frac{2p-2q-1} { 2q+1}
\\
\label{eq:2p2}& = & \frac{ (2p)! }{ (2q+2)! ( 2p-2q-2)!} \frac{ 2q+2}{2q+1} 
=  { 2p \choose 2q+2 } \frac{ 2q+2}{2q+1}  .
\EEA
For the end of the   interval above in $q$, that is,  $q=1$, we obtain $C_{2q+1} \frac{2p-2q-1} { 2q+1} \leqslant C_{2q+2} \frac{4}{3} $, while for $q\geqslant 2$, we obtain $C_{2q+1} \frac{2p-2q-1} { 2q+1} \leqslant C_{2q+2} \frac{6}{5} $.

We have moreover, by using the bound $  2 \gamma R A_{n-1}^{1/2} \leqslant \frac{\alpha}{2}  (2 \gamma R)^2 + \frac{1}{2\alpha} A_{n-1}$ for $\alpha = \frac{2q+1}{2p-2q-1}$:
\BEAS
& & C_{2q+1} A_{n-1} ^{q+1/2} (2\gamma R)^{2p - 2q - 1}  \\
& = &  C_{2q+1} A_{n-1} ^{q} (2\gamma R)^{2p - 2q - 2}  A_{n-1}^{1/2} ( 2 \gamma R) \\
& \leqslant &  C_{2q+1} A_{n-1} ^{q} (2\gamma R)^{2p - 2q - 2} \frac{1}{2}\bigg[
\frac{2q+1}{2p-2q-1} ( 2 \gamma R)^2   + \frac{2p-2q-1} {2q+1} A_{n-1}
\bigg]  \\
& = & \frac{1}{2}  C_{2q+1} \frac{2p-2q-1}{2q+1}   A_{n-1} ^{q+1}  (2\gamma R)^{2p - 2q - 2}
+\frac{1}{2}  C_{2q+1} \frac{2q+1}{2p-2q-1}   A_{n-1} ^{q}(2\gamma R)^{2p - 2q } .
\EEAS
By combining the previous inequality with \eq{2p} and \eq{2p2}, we get that the terms indexed by $2q+1$ are bounded by the terms indexed by $2q+2 $ and $2q$. All terms with $ q \in \{2,\dots,p-3\}$ are expanded with constants $\frac{3}{5}$, while for $q=1$ and $q=p-2$, this is $\frac{2}{3}$. Overall each even term receives a contribution which is less than
$\max\{  \frac{6}{5}, \frac{3}{5} + \frac{2}{3} , \frac{2}{3}\} = \frac{19}{15}$.
This leads to
\BEAS
  \sum_{q=1}^{p-2}
C_{2q+1} A_{n-1} ^{q+1/2} (2\gamma R)^{2p - 2q - 1}   
&  
  \leqslant  &  \frac{19}{15}
\sum_{q=0}^{p-1}
C_{2q} A_{n-1} ^{q} (2\gamma R)^{2p - 2q },
\EEAS
leading to the recursion that will allow us to derive our result:
\BEQ
\label{eq:qwe}
\E \big [   A_{n} ^{p}  \big| \F_{n-1} \big]  
 \leqslant     A_{n-1}^{p}  
 +
\frac{34}{15}
\sum_{q=0}^{p-1}
{ 2p \choose 2q} A_{n-1} ^{q} (2\gamma R)^{2p - 2q }.
 \EEQ

\subsection{Proof by Induction}
\label{app:mine2}
 
We now proceed by induction on $p$.
If we assume that $ \E A_k^{q} \leqslant \big( 3 \|\theta_0 - \theta_\ast \|^2 + k q \gamma^2 R^2 B \big)^q$ for all $q < p$, and a certain $B$ (which we will choose to be equal to $20$). We first note that if $n\leqslant 4p$, then from \eq{always}, we have
\BEAS
\E {A_n^p}&  \leqslant &   \big( 3 \| \theta_0 - \theta_\ast\|^2 + 5 n^2 \gamma^2 R^2 \
\big)^p\\
&  \leqslant &   \big( 3 \| \theta_0 - \theta_\ast\|^2 + 20 n p \gamma^2 R^2 \
\big)^p.
\EEAS

Thus, we only need to consider $n \geqslant 4p$. We then get from \eq{qwe}:
\BEAS
\E \| \theta_{n }- \theta_\ast \|^{2p} & \leqslant &   \| \theta_{0 }- \theta_\ast \|^{2p} 
+ 
 \frac{34}{15} \sum_{k=0}^{n-1}
\sum_{q=0}^{p-1}
{ 2p \choose 2q} \E A_k^{q} (2\gamma R)^{2p - 2q }\\
& \leqslant &   \| \theta_{0 }- \theta_\ast \|^{2p} 
+ 
 \frac{34}{15} \sum_{k=0}^{n-1}
\sum_{q=0}^{p-1}
{ 2p \choose 2q} \big( 3\|\theta_0 - \theta_\ast \|^2 + k q \gamma^2 R^2 B \big)^q (2\gamma R)^{2p - 2q },
\EEAS
using the induction hypothesis. We may now sum with respect to $k$:
\BEAS
& &  \E \| \theta_{n }- \theta_\ast \|^{2p}  \\
& \!\!\leqslant \!\! &   \| \theta_{0 }- \theta_\ast \|^{2p} 
+ 
 \frac{34}{15} 
\sum_{q=0}^{p-1}
{ 2p \choose 2q} (2\gamma R)^{2p - 2q } \sum_{k=0}^{n-1} \big( 3 \|\theta_0 - \theta_\ast \|^2 + k q \gamma^2 R^2 B \big)^q 
\\
& \!\! \leqslant  \!\! &   \| \theta_{0 }- \theta_\ast \|^{2p} 
+ 
 \frac{34}{15} 
\sum_{q=0}^{p-1}
{ 2p \choose 2q} (2\gamma R)^{2p - 2q } \sum_{j=0}^q 3^j
\|\theta_0 - \theta_\ast \|^{2j } { q \choose j} \big( q \gamma^2 R^2 B \big)^{q-j}\frac{n^{q-j+1}}{q-j+1}
\\
& & \mbox{ using } \sum_{k=0}^{n-1}  k^\alpha \leqslant \frac{ n^{\alpha+1}}{\alpha + 1} \mbox{ for any } \alpha > 0 , \\
& \!\!= \!\! &   \| \theta_{0 }- \theta_\ast \|^{2p} 
+ 
 \frac{34}{15} \sum_{j=0}^{p-1}  3^j\|\theta_0 - \theta_\ast \|^{2j } (4\gamma^2 R^2 n)^{p-j}
\sum_{q=j}^{p-1}
{ 2p \choose 2q}  
 { q \choose j} \big( \frac{q   B}{4} \big)^{q-j}\frac{n^{q-p+1}}{q-j+1},
\EEAS
by changing the order of summations.
We now aim to show that it is less than
\BEAS
\Big( 3 \|\theta_0 - \theta_\ast \|^2 + k p \gamma^2 R^2 B \Big)^p
 =   3^p\| \theta_{0 }- \theta_\ast \|^{2p} 
 + \sum_{j=0}^{p-1} 3^j \|\theta_0 - \theta_\ast \|^{2j } (\gamma^2 R^2 n)^{p-j} (Bp)^{p-j} { p \choose j }.
\EEAS
By comparing all terms in $\| \theta_0 - \theta_\ast\|^{2j}$, this is true as soon as for all $ j \in \{0,\dots,p-1\}$,
$$
\frac{34}{15} \sum_{q=j}^{p-1}
{ 2p \choose 2q}  
 { q \choose j} \big( q   B /4\big)^{q-j}\frac{1}{q-j+1} \frac{1}{n^{p-q - 1}}
 \leqslant (Bp/4)^{p-j} { p \choose j }
 $$
 $$
 \Leftrightarrow 
\frac{34}{15} \sum_{k=0}^{p-1-j}
{ 2p \choose 2k+2}  
 { p\!- \! 1 \! - \! k \choose j} \big( (p-1-k )   B/4 \big)^{p-1-k -j}\frac{1}{p\!-\! k \! - \! j} \frac{1}{n^{k}}
 \leqslant (Bp/4)^{p-j} { p \choose j },
 $$
 obtained by using the change of variable $k = p - 1 - q$.
This is implied by, using $n \geqslant 4p$:
$$
\frac{136}{15}\sum_{k=0}^{p-1-j} B^{-1-k  } p^{-k-p+j}
{ 2p \choose 2k+2}  
 \frac{ { p-1-k \choose j}}{{ p \choose j }} \big(  p-1-k       \big)^{p-1-k -j}\frac{1}{p-k -j} 
  \leqslant    1.
$$
By expanding the binomial coefficients and simplifying by $p-k-j$, this is equivalent to
$$
  \frac{136}{15}\sum_{k=0}^{p-1-j}B^{-1-k  } p^{-k-p+j}
{ 2p \choose 2k+2}  
 \frac{  (p-1-k) \cdots (p-k-j+1) }{p \cdots (p-j+1)} \big( p-1-k     \big)^{p-1-k -j}     \leqslant    1  .
$$
We may now write
\BEAS
 \frac{  (p-1-k) \cdots (p-k-j+1) }{p \cdots (p-j+1)} & =  & 
 \frac{ (p-1-k)! }{ (p-k-j)!} \frac{ (p-j)!}{p!} = \frac{ (p-1-k)! }{ p! } \frac{ (p-j)!}{ (p-k-j)!} \\
 & = &  \frac{  (p-j) \cdots (p-k-j+1) }{p \cdots (p-k)},
\EEAS
so that we only need to show that
 \BEAS
\frac{136}{15}\sum_{k=0}^{p-1-j}B^{-1-k  } p^{-k-p+j}
{ 2p \choose 2k+2}  
 \frac{  (p-j) \cdots (p-k-j+1) }{p \cdots (p-k)} \big(  p-1-k      \big)^{p-1-k -j}  &  \leqslant  & 1 .
 \EEAS
 We have, by bounding all terms then than $p$ by $p$:
 \BEAS
 & & \frac{136}{15}\sum_{k=0}^{p-1-j} A^{-1-k  } p^{-k-p+j}
{ 2p \choose 2k+2}  
 \frac{  (p-j) \cdots (p-k-j+1) }{p \cdots (p-k)} \big(  p-1-k     \big)^{p-1-k -j}   \\
 & \leqslant & 
 \frac{136}{15}\sum_{k=0}^{p-1-j} A^{-1-k  } p^{-k-p+j}
{ 2p \choose 2k+2}  
 \frac{ p^k  }{p \cdots (p-k)} p^{p-1-k -j}   \\
 & = & 
 \frac{136}{15}\sum_{k=0}^{p-1-j} A^{-1-k  } p^{-k-1}
{ 2p \choose 2k+2}  
 \frac{ 1  }{p \cdots (p-k)}   \\
 & = & 
 \frac{136}{15}\sum_{k=0}^{p-1-j} A^{-1-k  } \frac{p^{-k-1}}{(2k+2)!}
 \frac{ 2p(2p-1)\cdots(2p-2k-1)  }{p \cdots (p-k)}   \\
 & = & 
 \frac{136}{15}\sum_{k=0}^{p-1-j} A^{-1-k  } \frac{p^{-2-1} 2^{2k+2}}{(2k+2)!}
 \frac{ p(p-1/2)\cdots(p-k-1/2)  }{p \cdots (p-k)}   \\
 & \leqslant & 
 \frac{136}{15}\sum_{k=0}^{p-1-j} A^{-1-k  } \frac{  2^{2k+2}}{(2k+2)!}  \\
 & & \mbox{ by associating all } 2k+2  \mbox { terms in ratios which are all less than } 1, \\
& \leqslant & 
 \frac{136}{15}\sum_{k=0}^{+\infty} \frac{  (2/\sqrt{A})^{2k+2}}{(2k+2)!}
 = \frac{136}{15} \big[\cosh  (2/\sqrt{A}) - 1 \big] <1 \mbox{ if } A \leqslant 20
.
 \EEAS
 We thus get the desired result
 $
  \E A_n^{p}
 \leqslant  \big( 3 \|\theta_0 - \theta_\ast \|^2 +  20 n p \gamma^2 R^2  \big)^{p}
 $, and the proposition is proved by induction.

 \subsection{Alternative Proof Using Burkholder-Rosenthal-Pinelis Inequality}
 \label{app:newsto}
 In this section, we present (a slightly modified version of) the proof from \citet{newsto} which is based on Burkholder-Rosenthal-Pinelis inequality \citep[Theorem 4.1]{pinelis}, which we now recall.

 \subsubsection{BRP Inequality}
 Throughout the proof, we use the notation for $X \in \H$ a random vector, and $p$ any {real} number greater than $1$, $\| X\|_p = \big( \E \| X\|^p \big)^{1/p}$. We first recall the Burkholder-Rosenthal-Pinelis (BRP) inequality \citep[Theorem 4.1]{pinelis}. Let $p \in \rb$, $p \geqslant 2$ and $(\mathcal{F}_n)_{n \geqslant 0}$ be a sequence of increasing $\sigma$-fields, and $(X_n)_{n \geqslant 1}$ an adapted sequence of elements of $\H$, such that
$\E \big[ X_n | \F_{n-1} \big] = 0$, and $\| X_n\|_p$ is finite. Then,
\BEA
\label{eq:brp}
\bigg\| \sup_{k\in\{1,\dots,n\}} \bigg\| \sum_{j=1}^k X_j  \bigg \| \bigg\|_p
& \leqslant &  \sqrt{p} \bigg\|
\sum_{k=1}^n \E \big[ \| X_k\|^2 | \F_{k-1} \big]
\bigg\|_{p/2}^{1/2}
+ p \bigg\| \sup_{k\in\{1,\dots,n\}} \| X_k \| \bigg\|_p \\
\nonumber & \leqslant &  \sqrt{p} \bigg\|
\sum_{k=1}^n \E \big[ \| X_k\|^2 | \F_{k-1} \big]
\bigg\|_{p/2}^{1/2}
+ p \bigg\| \sup_{k\in\{1,\dots,n\}} \| X_k \|^2 \bigg\|_{p/2}^{1/2}.
\EEA

\subsubsection{Proof of Proposition~\ref{prop:boundp} (With Slightly Worse Constants)}
 We use BRP's inequality in \eq{brp} to get, for $p \in [2,n/4]$:
\BEAS
\Big\| \sup_{k \in \{0,\dots, n\} } A_k \Big\|_p
& \leqslant & \| \theta_{0 }- \theta_\ast \|^2
+ n \gamma^2   R^2 
+ \sqrt{p} \ \bigg\|
16 \gamma^2 R^2 \sum_{k=1}^n \| \theta_{k-1} - \theta_\ast \|^2
\bigg\|_{p/2}^{1/2} \\
&&  \hspace*{4cm}
+ p \ \Big\| \sup_{k \in \{1,\dots, n\} } 4 \gamma R \| \theta_{k-1} - \theta_\ast\|  \Big\|_p
\\
& \leqslant & \| \theta_{0 }- \theta_\ast \|^2
+ n \gamma^2   R^2 
+ \sqrt{p} \ 4 \gamma R  \sqrt{n}  \Big\|
  \sup_{k \in \{0,\dots, n-1\} } A_k
\Big\|_{p/2}^{1/2} \\
& & \hspace*{5cm}
+ p \ 4 \gamma R \Big\| \sup_{k \in \{0,\dots, n-1\} }  A_{k}^{1/2}  \Big\|_p
\\
& \leqslant & \| \theta_{0 }- \theta_\ast \|^2
+ n \gamma^2   R^2 
+ 4 \gamma R     \Big\|
  \sup_{k \in \{0,\dots, n-1\} } A_k
\Big\|_{p/2}^{1/2} \big( \sqrt{p n } + p \big)
.\EEAS
Thus if $B= \Big\| \sup_{k \in \{0,\dots, n\} } A_k \Big\|_p$, we have (using $p \leqslant n/4$, which implies $\sqrt{p n } + p \leqslant \frac{3}{2}\sqrt{pn}$):
$$
B \leqslant  \| \theta_{0 }- \theta_\ast \|^2
+ n \gamma^2   R^2 
+   6 \gamma R   B^{1/2}  \sqrt{p n }   .$$
By solving this quadratic inequality, we get:
$$
\big( B^{1/2}  - 3 \gamma R   \sqrt{p n }   \big)^2
\leqslant  \| \theta_{0 }- \theta_\ast \|^2
+ n \gamma^2   R^2 
+   9 \gamma^2 R^2 pn ,  $$
which implies
\BEAS
 B^{1/2}  & \leqslant &  3 \gamma R  \sqrt{p n }  
+ \sqrt{  \| \theta_{0 }- \theta_\ast \|^2
+ n \gamma^2   R^2 
+  9 \gamma^2 R^2 pn } \\
B & \leqslant &  2 \times 9 \gamma^2 R^2   {p n }  
+ 2 \times \big(   \| \theta_{0 }- \theta_\ast \|^2
+ n \gamma^2   R^2 
+  9 \gamma^2 R^2 pn \big) \\
& \leqslant & 40 \gamma^2 R^2 pn
+ 2 \| \theta_{0 }- \theta_\ast \|^2.
\EEAS
The previous statement is valid for $p \geqslant 2$ and trivial for $p=1$. From~\myapp{small}, we only need to have the result for $p \leqslant \frac{n}{4}$. Thus the bound is slightly worse (but could be clearly improved with more care, for example,  by using induction on $n$).

 \subsection{Alternative Proof Using Freedman's Inequality}
 \label{app:suggested}
 
 In the previous section, we have used $p$-th order moment martingale inequalities that relate the norm of  a martingale to the norm of its predictable quadratic variation process. Similar results may be obtained for tail bounds through Freedman's inequality \citep[Theorem 1.6]{freedman}. This proof technique was suggested and outlined by Alekh Agarwal (personal communication).
 
 \subsubsection{Freedman's Inequality and Extensions}
 Let $(X_n)$ be a real-valued martingale increment adapted to the increasing sequence of $\sigma$-fields $(\F_n)$, that is, such that $\E (X_n | \mathcal{F}_n) = 0$, that is almost surely bounded, that is, $|X_n| \leqslant R$ almost surely. Let $\Sigma_n = \sum_{k=1}^n \E ( X_k^2 | \F_{k-1})$ the predictable quadratic variation process. Then for any constants $t$ and $\sigma^2$,
 $$
 \P \Big(
 \max_{k \in \{1,\dots,n\}}  \sum_{i=1}^k X_i   \geqslant t, \Sigma_n \leqslant \sigma^2
 \Big) \leqslant 2 \exp \Big( \frac{-t^2}{2( \sigma^2 + Rt / 3)} \Big).
 $$
 When $(X_n)$ are independent random variables, this recovers Bernstein's inequality. From this bound, one may derive the following bound \citep{kakade2009generalization}; with probability $1-4 (\log n) \delta$, we have:
 \BEQ
 \label{eq:freedman} \max_{k \in \{1,\dots,n\}}  \sum_{i=1}^k X_i \leqslant \max \Big\{
 2 \sqrt{ \Sigma_n}, 3R \sqrt{\log\textstyle \frac{1}{\delta}}
 \Big\} \sqrt{\log \textstyle \frac{1}{\delta}}
 \leqslant   
 2 \sqrt{ \Sigma_n} \sqrt{\log \textstyle \frac{1}{\delta}} +   3R \ {\log\textstyle \frac{1}{\delta}}.
 \EEQ
 Note that the result of \citet{kakade2009generalization} considers only $\displaystyle \sum_{i=1}^n X_i $ rather than \linebreak[4]
 $\displaystyle\max_{k \in \{1,\dots,n\}}  \sum_{i=1}^k X_i$, but that the extension of their proof is straightforward.
 
  \subsubsection{Proof of Proposition~\ref{prop:dev} (With Slightly Worse Constants and Scalings)}
We can now apply the inequality in \eq{freedman} to $(M_n)$. We have
$|M_n| \leqslant 4 \gamma R \| \theta_{n-1} - \theta_\ast\| \leqslant 4 \gamma R
 \big( \| \theta_0 - \theta_\ast\| + n \gamma R \big)
$ almost surely. Moreover, $\E (M_n^2 | \F_{n-1} ) \leqslant 16 \gamma^2 R^2 \| \theta_{n-1} - \theta_\ast\|^2 \leqslant 16 \gamma^2 R^2 A_{n-1}$.

This leads to with probability greater than $1-4 (\log n) \delta$,
\BEAS
 \max_{k \in \{1,\dots,n\}}  A_k\!\!\!
&  \leqslant   &
 \| \theta_0 - \theta_\ast\|^2 + n \gamma^2 R^2 + 
 8 \gamma R \sqrt{\sum_{k=1}^{n-1} A_k}  \sqrt{\log \textstyle \frac{1}{\delta}} +   12 \gamma R
 \big( \| \theta_0 - \theta_\ast\| + n \gamma R \big)
  \ {\log\textstyle \frac{1}{\delta}} \\
  & \leqslant &  \| \theta_0 - \theta_\ast\|^2 + n \gamma^2 R^2 + 
 8 \gamma R \sqrt{n}  \max_{k \in \{1,\dots,n\}}  \sqrt{A_k}  \sqrt{\log \textstyle \frac{1}{\delta}} \\
 & & \hspace*{7cm} +   12 \gamma R
 \big( \| \theta_0 - \theta_\ast\| + n \gamma R \big)
  \ {\log\textstyle \frac{1}{\delta}}.
 \EEAS
 We may now solve the quadratic inequality in $\max_{k \in \{1,\dots,n\}}  \sqrt{A_k} $. This leads to
 \BEAS
 & & \Big( \max_{k \in \{1,\dots,n\}}  \sqrt{A_k}
 - 4 \gamma R \sqrt{n}   \sqrt{\log \textstyle \frac{1}{\delta}}
 \Big)^2 \\
&  \leqslant   &
 \| \theta_0 - \theta_\ast\|^2 + n \gamma^2 R^2   + 12 \gamma R
 \big( \| \theta_0 - \theta_\ast\| + n \gamma R \big)
  \ {\log\textstyle \frac{1}{\delta}}
  + 16 \gamma^2 R^2 n {\log \textstyle \frac{1}{\delta}} \\
 &  =   &
 \| \theta_0 - \theta_\ast\|^2 + n \gamma^2 R^2   + 
 \big( 12 \gamma R \| \theta_0 - \theta_\ast\| + 28  n \gamma^2 R^2 \big)
  \ {\log\textstyle \frac{1}{\delta}}.
 \EEAS
 Then
 \BEAS
   & & \max_{k \in \{1,\dots,n\}}  \sqrt{A_k} \\
  & \leqslant & 4 \gamma R \sqrt{n}   \sqrt{\log   \frac{1}{\delta}}
  + 
   \| \theta_0 - \theta_\ast\|  + \sqrt{n} \gamma  R    + 
 \sqrt{  12 \gamma R \| \theta_0 - \theta_\ast\| + 28  n \gamma^2 R^2 }
  \ \sqrt{\log  \frac{1}{\delta}} 
  \EEAS
  and
  \BEAS
& &     \max_{k \in \{1,\dots,n\}}    {A_k} \\
 & \leqslant & 64 \gamma^2 R^2 n    {\log   \frac{1}{\delta}}
  + 
  4  \| \theta_0 - \theta_\ast\|^2  + 4  {n} \gamma^2  R^2    + 
 4 \big(  12 \gamma R \| \theta_0 - \theta_\ast\| + 28  n \gamma^2 R^2 \big)
  \ {\log  \frac{1}{\delta}} \\
  & \leqslant & 
   4  \| \theta_0 - \theta_\ast\|^2  + 4  {n} \gamma^2  R^2
   + \Big(
    64 \gamma^2 R^2 n  
    + 48 \gamma R \| \theta_0 - \theta_\ast\| + 112  n \gamma^2 R^2 
   \Big)  \log  \frac{1}{\delta}
\\
  & \leqslant & 
   4  \| \theta_0 - \theta_\ast\|^2  + 4  {n} \gamma^2  R^2
   + \Big(
    176 \gamma^2 R^2 n  
    + 48 \gamma R \| \theta_0 - \theta_\ast\|  
   \Big)  \log  \frac{1}{\delta}.
 \EEAS
We thus recover a tail bound which is very similar to the one obtained in Proposition~\ref{prop:dev}, with the following differences: the additional term
$48 \gamma R \| \theta_0 - \theta_\ast\|  $ is unimportant because $\gamma = O(N^{-1/2})$; however, because the extension of Freedman's inequality is satisfied with probability $1-4 (\log n) \delta$, this proof technique loses a logarithmic factor.

\section{Proof of Proposition~\ref{prop:grad}}

The proof is organized in two parts:  we first show a bound on the averaged gradient $\frac{1}{n} \sum_{k=1}^n f'(\theta_{k-1})$, then relate it to  the gradient at the averaged iterate, that is, \linebreak[4] $ f' \Big(\frac{1}{n} \sum_{k=1}^n \theta_{k-1}   \Big) $, using self-concordance.

\subsection{Bound on $\frac{1}{n} \sum_{k=1}^n f'(\theta_{k-1})$}
We have, following \citet{polyak1992acceleration} and \citet{gradsto}:
$$
f_n'(\theta_{n-1}) = \frac{1}{\gamma}( \theta_{n-1} - \theta_{n}),
$$
which implies, by summing over all integers between $1$ and $n$:
\BEAS
\frac{1}{n} \sum_{k=1}^n f'(\theta_{k-1})
& = & \frac{1}{n} \sum_{k=1}^n \big[ f'(\theta_{k-1}) - f'_k(\theta_{k-1}) \big]
+ \frac{1}{\gamma n} ( \theta_0 - \theta_\ast) 
+ \frac{1}{\gamma n} ( \theta_\ast - \theta_n) .
\EEAS

We denote  $X_k = \frac{1}{n} \big[  f'(\theta_{k-1}) - f'_k(\theta_{k-1}) \big] \in \H$. We have: $\|X_k\| \leqslant \frac{2R}{n}$ almost surely and $\E (X_k | \F_{k-1}) = 0$, with
$ \big( \sum_{k=1}^n \E (\|X_k\|^2|\F_{k-1}) \big)^{1/2} \leqslant \frac{2R}{\sqrt{n}}$.
We may thus apply the Burkholder-Rosenthal-Pinelis inequality \citep[Theorem 4.1]{pinelis}, and get:
\BEAS
\bigg[ \E \bigg\|
\frac{1}{n} \sum_{k=1}^n \big[ f'(\theta_{k-1}) - f'_k(\theta_{k-1}) \big] \bigg\|^{2p}
  \bigg]^{1/2p}
& \leqslant &   2p \frac{2R}{n} + \sqrt{2p} \frac{2R}{n^{1/2}}  .
\EEAS
This leads to, using Proposition~\ref{prop:boundp} and Minkowski's inequality:
\BEA
 \nonumber  & & \bigg[ \E  \bigg\|
\frac{1}{n} \sum_{k=1}^n f'(\theta_{k-1}) \bigg\|^{2p}    \bigg]^{1/2p} \\
\nonumber & \leqslant &  
\bigg[ \E \bigg\|
\frac{1}{n} \sum_{k=1}^n \big[ f'(\theta_{k-1}) - f'_k(\theta_{k-1}) \big] \bigg\|^{2p}
  \bigg]^{1/2p}  
   + \frac{1}{\gamma n} \| \theta_0 - \theta_\ast \|
+ \frac{1}{\gamma n} \big[ \E \| \theta_\ast - \theta_n \|^{2p} \big]^{1/2p}
\\
\nonumber  & \leqslant & 
 2p \frac{2R}{n} + \sqrt{2p} \frac{2R}{n^{1/2}} 
 + \frac{1}{\gamma n} \| \theta_0 - \theta_\ast\| +
\big[ \frac{1}{\gamma n} \sqrt{  3\|\theta_0 - \theta_\ast \|^2 +  20 n p \gamma^2 R^2}  \big]\\
\nonumber  & \leqslant & 
  2p \frac{2R}{n} + \sqrt{2p} \frac{2R}{n^{1/2}} 
 + \frac{1}{\gamma n} \| \theta_0 - \theta_\ast\| +
 \big[ \frac{\sqrt{3}}{\gamma n}  \|\theta_0 - \theta_\ast \|  +  \frac{1}{\gamma n} \sqrt{20 n p} \gamma R \big] \\
\nonumber  & \leqslant & 
   \frac{4pR}{n} + \sqrt{2p} \frac{2R}{n^{1/2}} 
 + \frac{2}{\gamma n} \| \theta_0 - \theta_\ast\|  +  \frac{1}{\gamma n} \sqrt{20 n p} \gamma R  \\
\nonumber  & \leqslant & 
 \frac{4pR}{n} +  \sqrt{p} \frac{R}{\sqrt{n}}
  \big[
   2\sqrt{2} + \sqrt{20}
  \big] 
 + \frac{1+ \sqrt{3} }{\gamma n} \| \theta_0 - \theta_\ast\|
 \\
 \label{eq:BC}
 & \leqslant & 
  \frac{4pR}{n} + 8 \sqrt{p} \frac{R}{\sqrt{n}}
 + \frac{3}{\gamma n} \| \theta_0 - \theta_\ast\|.
 \EEA
  
  \subsection{Using Self-Concordance}
Using the self-concordance property of Lemma~\ref{lemma:selfcontrol} several times, we obtain:
\BEAS
\!\!\!  &\!\!\!  \!\!\!  & \bigg\| \frac{1}{n} \sum_{k=1}^n f'(\theta_{k-1}) 
- f' \bigg(\frac{1}{n} \sum_{k=1}^n \theta_{k-1}   \bigg) \bigg\| \\
\!\!\! & \!\!\! \!\!\! \!\!\! =  \!\!\! & 
\bigg\| \frac{1}{n} \sum_{k=1}^n \big[ f'(\theta_{k-1}) - f'(\theta_\ast) 
- f''(\theta_\ast) ( \theta_{k-1} - \theta_\ast) \big] \\
& & \hspace*{5cm}
- f' \bigg(\frac{1}{n} \sum_{k=1}^n \theta_{k-1}   \bigg)   + f'(\theta_\ast) 
+ f''(\theta_\ast) \bigg(\frac{1}{n} \sum_{k=1}^n \theta_{k-1} - \theta_\ast \bigg) \bigg\| \\
\!\!\! & \!\!\! \!\!\!  \!\!\! \leqslant  \!\!\!  & \frac{R}{n} \sum_{k=1}^n \big[ f(\theta_{k-1}) - f(\theta_\ast) 
- \langle f'(\theta_\ast) , \theta_{k-1} - \theta_\ast \rangle \big]
\\
& & \hspace*{5cm}
+ R \bigg[
f \bigg(\frac{1}{n} \sum_{k=1}^n \theta_{k-1}   \bigg)   - f(\theta_\ast) 
+ \bigg\langle f'(\theta_\ast) , \frac{1}{n} \sum_{k=1}^n \theta_{k-1} - \theta_\ast \bigg\rangle 
\bigg]
\\
\!\!\! &\!\!\! \!\!\!\!\!\!    \leqslant  \!\!\! & 2R \bigg( \frac{1}{n} \sum_{k=1}^n  f(\theta_{k-1}) - f(\theta_\ast) \bigg)
\mbox{ using the convexity of $f$.}
\EEAS

This leads to, using Proposition~\ref{prop:boundp}:
\BEA
\nonumber  & &\bigg( \E \bigg\| \frac{1}{n} \sum_{k=1}^n f'(\theta_{k-1}) 
- f' \bigg(\frac{1}{n} \sum_{k=1}^n \theta_{k-1}   \bigg) \bigg\|^{2p} \bigg)^{1/2p} \\
\label{eq:BV} & \leqslant & 2R \bigg( \E \bigg[\frac{1}{n} \sum_{k=1}^n  f(\theta_{k-1}) - f(\theta_\ast) \bigg]^{2p} \bigg)^{1/2p}   \leqslant   \frac{2R}{2\gamma  n}
\bigg(
3\|\theta_0 - \theta_\ast\|^2 + 40 n  p \gamma^2 R^2
\bigg).
\EEA
Summing \eq{BC} and \eq{BV}   leads to the desired result.

\section{Results for Small $p$}
\label{app:F}
In Proposition~\ref{prop:boundp}, we may replace the bound $3\| \theta_0 - \theta_\ast\|^2 + 20 n  p\gamma^2 R^2$ with a bound with smaller constants for $p=1,2,3$ (to be used in proofs of results in \mysec{self}).
This is done using the same proof principle but finer derivations, as follows. 
We denote $\gamma^2 R^2 = b$ and $\|\theta-\theta_\ast\|^2 = a$, and consider the following inequalities which we have considered in the proof of Proposition~\ref{prop:boundp}:
\BEAS
A_n^p & \leqslant & ( A_{n-1} + b + M_{n} )^p \\
M_{n} & \leqslant & 4 b^{1/2} A_{n-1}^{1/2} \mbox{ and } \E (M_{n} | \F_{n-1}) = 0,\\
A_0 & = & a.
\EEAS
We simply take expansions of the $p$-th power above, and sum for all first integers.
We have:
\BEAS
\E A_n \!\!\! & \leqslant & \E A_{n-1} + b \leqslant a + n b, \\
\E A_n^2 \!\!\! & \leqslant & \E( A_{n-1}^2 + b^2 + 2 b A_{n-1} + M_n^2 ) \leqslant \E A_{n-1}^2 + 2 \E A_{n-1}b + b^2 + 16 b \E A_{n-1} \\
& \leqslant &  a^2 + 18b \bigg[ \sum_{k=0}^{n-1} a + k b \bigg] + b^2 n  
\leqslant  a^2 + 18b [ na + \frac{n^2}{2} b ] + b^2 n   \\
& & \mbox{ using the result about } \E A_{n-1},
\\
& = & a^2 + 18 b na +  b^2 ( n + 9 n^2) \\
& \leqslant & (a + 9nb)^2.
\EEAS
We may now pursue for the third order moments:
\BEAS
\E A_n^3 \!\!\! & \leqslant & \E ( A_{n-1} +  b )^3 + 3 \E (A_{n-1} + b)^2 M_n^2  + 3 \E (A_{n-1} + b)^3 M_n   +    \E M_{n-1}^3
\\
& \leqslant & \E ( A_{n-1} +  b )^3 + 3 \E (A_{n-1} + b)^2 16 b A_{n-1} + 0 + 64 b^{3/2} \E A_{n-1}^{3/2}
\\
& \leqslant & ( \E A_{n-1}^3 + 3\E A_{n-1}^2b+3 \E A_{n-1} b^2 + b^3 ) + 3\E ( A_{n-1} + b) 16 b A_{n-1} + 64 b^{3/2} \E A_{n-1}^{3/2}
\\
 & = & ( \E A_{n-1}^3 + 3\E A_{n-1}^2b+3 \E A_{n-1} b^2 + b^3 ) + 3\E ( A_{n-1} + b) 16 b A_{n-1}\\
 & & \hspace*{8cm} + 32 b\E A_{n-1} [ 2 b^{1/2} A_{n-1}^{1/2}].
 \EEAS
 By expanding, we get
\BEAS
\E A_n^3 \!\!\! & \leqslant & ( \E A_{n-1}^3 + 3\E A_{n-1}^2b+3 \E A_{n-1} b^2 + b^3 ) + 3 \E ( A_{n-1} + b) 16b A_{n-1} 
 \\
 & & \hspace*{8cm} + 32 \E  bA_{n-1} [ \frac{A_{n-1}}{4} + 4 b]
 \\
  & = &  E A_{n-1}^3 + \E A_{n-1}^2b [ 3 + 48 + 8] +  \E A_{n-1} b^2 [ 3+ 48 + 128] + b^3
 \\
  & = &  \E A_{n-1}^3 +  59 \E A_{n-1}^2b  +  179 \E A_{n-1} b^2   + b^3
\\
  & \leqslant &  a^3 +  59  b  \bigg[ \sum_{k=1}^{n-1} a^2 + 18 b k a +  b^2 ( k + 9 k^2) \bigg]+  179   b^2 
   \bigg[ \sum_{k=1}^{n-1} a + kb \bigg]+ n b^3\\
  & \leqslant &  a^3 +  59  b  [ na^2 + 9 bn^2 a + b^2( n^2/2 + 3 n^3) ]+  179   b^2 
  [ n a + bn^2/2  ]  + n b^3\\
   & = &  a^3 +  59 n b a^2 
   +   b^2 a [ 59 \cdot 9  n^2 + 179 n  ]
   + b^3 [ 59/2 \cdot n^2 + 3 \cdot 59 n^3 + 179/2 \cdot n^2 + n ]
\\
   & = &  a^3 +  59 n b a^2 
   +   b^2 a [ 531 n^2 + 179 n ]
   + b^3 [ 119 n^2 + 177 n^3  + n ] \\
   & \leqslant & 
  ( a + 20 nb)^3.
  \EEAS

      We then obtain:
      \BEAS
  \E \bigg[
2\gamma n  \big[  f ( \bar{\theta}_n) -   f(\theta^\ast)  \big]+\| \theta_n - \theta_\ast \|^{2} \bigg]^2
&  \leqslant  & \big( \|\theta_0 - \theta_\ast \|^2 +  9 n  \gamma^2 R^2  \big)^{2}\\
  \E \bigg[
2\gamma n  \big[  f ( \bar{\theta}_n) -   f(\theta^\ast)  \big]+\| \theta_n - \theta_\ast \|^{2} \bigg]^3
&  \leqslant  & \big( \|\theta_0 - \theta_\ast \|^2 +  20 n  \gamma^2 R^2  \big)^{3}.
\EEAS

\section{Proof of Proposition~\ref{prop:boundself}}
\label{sec:boundself}

The proof follows from applying self-concordance properties (Lemma~\ref{prop:selfc})   to $\bar{\theta}_n$. We thus need to provide a control on the probability that $\| f'(\bar{\theta}_n )\|  \geqslant \frac{3\mu}{4R}$.

\subsection{Tail Bound for $\| f'(\bar{\theta}_n )\|$}
\label{app:tail}
  We  derive a large deviation bound, as a consequence of the bound on all moments of $\| f'(\bar{\theta}_n )\|$  (Proposition~\ref{prop:grad}) and Lemma~\ref{lemma:conc2}, that allows to go from moments to tail bounds:
$$  \P \bigg( \big\| f'(\bar{\theta}_n )\| \geqslant 
 \frac{2R}{\sqrt{n}}
\bigg[
 {10}{\sqrt{t}} + 40  R^2 \gamma t \sqrt{n}
 +
\frac{3}{ \gamma  \sqrt{n}}
\|\theta_0 - \theta_\ast\|^2  + \frac{3}{\gamma R \sqrt{n}} \| \theta_0 - \theta_\ast\|
\bigg]  \bigg) \leqslant  4 \exp( -  t ).$$
In order to derive the bound above, we need to assume that $p \leqslant n/4$ (so that $4p/n \leqslant 2 \sqrt{p} / \sqrt{n})$, and thus, when applying Lemma~\ref{lemma:conc2}, the bound above is valid as long as $t \leqslant n/4$. It is however valid for all $t$, because the gradients are bounded by $R$, and for $t > n$, we have $\frac{2R}{\sqrt{n}}
 {10}{\sqrt{t}}  \geqslant R$, and the inequality is satisfied with zero probability.

\subsection{Bounding the Function Values}
\label{app:boundF}
From  Lemma~\ref{prop:selfc}, if
$\| f'(\bar{\theta}_n )\|  \geqslant \frac{3\mu}{4R}$, then $f(\bar{\theta}_n) - f(\theta_\ast)\leqslant 2 \frac{\| f'(\bar{\theta}_n )\| ^2}{\mu}$. This will allow us to derive a tail bound for $f(\bar{\theta}_n) - f(\theta_\ast)$, for sufficiently small deviations. For larger deviations, we will use the tail bound which does not use strong convexity (Proposition~\ref{prop:dev}).

 We consider  the event
 \BEAS A_t &  = &  \bigg\{\big\| f'(\bar{\theta}_n) \| \leqslant 
 \frac{2R}{\sqrt{n}}
\bigg[
 {10}{\sqrt{t}} + 40  R^2 \gamma t \sqrt{n}
 +
\frac{3}{ \gamma  \sqrt{n}}
\|\theta_0 - \theta_\ast\|^2  + \frac{3}{\gamma R \sqrt{n}} \| \theta_0 - \theta_\ast\|
\bigg]  \bigg\}.
   \EEAS
  We make the following two assumptions regarding $\gamma$ and $t$:
  \BEA
  \label{eq:AAA}
    {10}{\sqrt{t}} + 40   R^2 \gamma t \sqrt{n}  & \leqslant & \frac{2}{3}  \frac{3\mu}{4R }   \frac{\sqrt{n}}{2R}  =     \frac{\mu \sqrt{n}}{4R^2} \\
 \nonumber \mbox{ and } \frac{3}{ \gamma  \sqrt{n}}
\|\theta_0 - \theta_\ast\|^2   +
   \frac{3}{\gamma R \sqrt{n}} \| \theta_0 - \theta_\ast\|
  & \leqslant & \frac{1}{3} \frac{3\mu}{4R }   \frac{\sqrt{n}}{2R} =       \frac{\mu \sqrt{n}}{8R^2},
   \EEA
   so that the upper-bound on $\| f'(\bar{\theta}_n) \|$ in the definition of $A_t$ is less than $  \frac{3\mu}{4R }$ (so that we can apply  Lemma~\ref{prop:selfc}). We thus have: 
   \BEAS A_t & \subset & \bigg\{
   f(\bar{\theta}_n) - f(\theta_\ast)
    \leqslant  
 \frac{8R^2}{\mu n}
\bigg[
 {10}{\sqrt{t}} + 40  R^2 \gamma t \sqrt{n}
 +
\frac{3}{ \gamma  \sqrt{n}}
\|\theta_0 - \theta_\ast\|^2  + \frac{2}{\gamma R \sqrt{n}} \| \theta_0 - \theta_\ast\|
\bigg]^2 \bigg\} 
   \\
   & \subset & \bigg\{
   f(\bar{\theta}_n) - f(\theta_\ast)
    \leqslant 
 \frac{8R^2}{\mu n}
\bigg[
 {10}{\sqrt{t}} + 20 \square t
 +  \triangle \bigg]^2 \bigg\} 
   ,
   \EEAS
   with $\displaystyle \square = 2 \gamma R^2 \sqrt{n}$ and $\displaystyle \triangle = \frac{3}{ \gamma  \sqrt{n}}
\|\theta_0 - \theta_\ast\|^2  + \frac{3}{\gamma R \sqrt{n}} \| \theta_0 - \theta_\ast\|$.

   This implies that  for all $t \geqslant 0$,
such that   ${10}{\sqrt{t}} + 20 \square t    \leqslant         \frac{\mu \sqrt{n}}{4R^2}  $, that is, our assumption in \eq{AAA}, we may apply the tail bound from Appendix~\ref{app:tail} to get:
   \BEQ
   \label{eq:tailbound1}
   \P \bigg(
     f(\bar{\theta}_n) - f(\theta_\ast) \geqslant \frac{8R^2}{\mu n}
\bigg[
 {10}{\sqrt{t}} + 20 \square t
 +  \triangle \bigg]^2  
 \bigg) \leqslant 4 e^{-t}.
 \EEQ
 Moreover, we have for all $v \geqslant 0$ (from Proposition~\ref{prop:dev}):
   \BEQ
   \label{eq:tailbound2}
\P\bigg(
  f ( \bar{\theta}_n) -   f(\theta_\ast)  \geqslant  30 \gamma R^2 v + \frac{ 3 \| \theta_0 - \theta_\ast\|^2}{\gamma n} \bigg)
  \leqslant 2 \exp (-v) .
  \EEQ
  We may now use the last two inequalities to bound the expectation   $\E [ f ( \bar{\theta}_n) -   f(\theta_\ast) ]$.
  
  We   first express the expectation as an integral of the tail bound and split it into three parts:
\BEA
\nonumber \E \big[ f(\bar{\theta}_n) - f(\theta_\ast) \big]
& = & \int_{0}^{+\infty}
\P \big[ 
f(\bar{\theta}_n) - f(\theta_\ast) \geqslant u
\big] du\\
\label{eq:proofAA} & = & \int_{0}^{\triangle^2 \frac{8R^2}{ {\mu n}} }
\P \big[ 
f(\bar{\theta}_n) - f(\theta_\ast) \geqslant u
\big] du \\
\nonumber & & 
+
\int_{\triangle^2 \frac{8R^2}{ {\mu n}} }^{\frac{8R^2}{ {\mu n}}\big( {
 \frac{\mu \sqrt{n}}{4R^2} +  \triangle } \big)^2}
\P \big[ 
f(\bar{\theta}_n) - f(\theta_\ast) \geqslant u
\big] du\\
\nonumber &&  +
\int^{+\infty}_{\frac{8R^2}{ {\mu n}}\big( {
 \frac{\mu \sqrt{n}}{4R^2} +  \triangle } \big)^2}
\P \big[ 
f(\bar{\theta}_n) - f(\theta_\ast) \geqslant u
\big] du.
\EEA
We may now bound the three terms separately. For the first integral, we bound the probability by one to get
$\displaystyle \int_{0}^{\triangle^2 \frac{8R^2}{ {\mu n}} }
\P \big[ 
f(\bar{\theta}_n) - f(\theta_\ast) \geqslant u
\big] du \leqslant  \triangle^2 \frac{8 R^2}{n\mu}$.

For the third term in \eq{proofAA}, we use the tail bound in \eq{tailbound2} to get
\BEAS
& & \int^{+\infty}_{\frac{8R^2}{ {\mu n}}\big( {
 \frac{\mu \sqrt{n}}{4R^2} +  \triangle } \big)^2}
\P \big[ 
f(\bar{\theta}_n) - f(\theta_\ast) \geqslant u
\big] du \\
& = & 
\int^{+\infty}_{\frac{8R^2}{ {\mu n}}\big( {
 \frac{\mu \sqrt{n}}{4R^2} +  \triangle } \big)^2 - \frac{3}{\gamma n} \| \theta_0 - \theta_\ast\|^2}
\P \bigg[ 
f(\bar{\theta}_n) - f(\theta_\ast) \geqslant u   + \frac{3}{\gamma n} \| \theta_0 - \theta_\ast\|^2
\bigg] du
\\
& \leqslant & 2
\int^{+\infty}_{\frac{8R^2}{ {\mu n}}\big( {
 \frac{\mu \sqrt{n}}{4R^2} +  \triangle } \big)^2 - \frac{3}{\gamma n} \| \theta_0 - \theta_\ast\|^2}
 \exp\big(
 -\frac{ u}{30 \gamma R^2 }
 \big)du.
\EEAS
We may apply \eq{tailbound2} because 
$$
\frac{8R^2}{ {\mu n}}\big( {
 \frac{\mu \sqrt{n}}{4R^2} +  \triangle } \big)^2 - \frac{3}{\gamma n} \| \theta_0 - \theta_\ast\|^2
 \geqslant \frac{8R^2}{ {\mu n}}\big( {
 \frac{\mu \sqrt{n}}{4R^2}\!  +  \!\triangle } \big)^2  - \frac{\mu}{8R^2} 
 \geqslant  \frac{8R^2}{ {\mu n}}\big( {
 \frac{\mu \sqrt{n}}{4R^2}  } \big)^2  - \frac{\mu}{8R^2}  = \frac{3 \mu}{8R^2}  \geqslant 0. $$
We can now compute the bound explicitly to get
\BEAS
& & \int^{+\infty}_{\frac{8R^2}{ {\mu n}}\big( {
 \frac{\mu \sqrt{n}}{4R^2} +  \triangle } \big)^2}
\P \big[ 
f(\bar{\theta}_n) - f(\theta_\ast) \geqslant u
\big] du \\
& \!\!\!\!\!\! \leqslant \!\!\!\!\!\!  & 
60 \gamma R^2  
 \exp\bigg( 
  \frac{ -1}{30 \gamma R^2 }
 \bigg[
 \frac{8R^2}{ {\mu n}}\big( {
 \frac{\mu \sqrt{n}}{4R^2} +  \triangle } \big)^2 \!-\! \frac{3}{\gamma n} \| \theta_0 - \theta_\ast\|^2
 \bigg]
 \bigg)  \leqslant 
 60 \gamma R^2  
 \exp\bigg( 
 \frac{- 1}{30 \gamma R^2 }
 \frac{3 \mu}{8R^2} 
 \bigg) 
 \\
 & \!\!\!\!\!\!  \leqslant \!\!\!\!\!\!  &  60 \gamma R^2  
 \exp\bigg(
 -\frac{ \mu}{80 \gamma R^4  }
 \bigg) 
 \leqslant  60 \gamma R^2  
 \frac{80 \gamma R^4  }{2 \mu}  \mbox{ using } e^{-\alpha} \leqslant \frac{1}{2\alpha} \mbox{ for all } \alpha > 0
\\
& \!\!\!\!\!\! = \!\!\!\!\!\!  &  \frac{2400 \gamma^2 R^6}{ \mu}.
\EEAS

We now consider the second term in \eq{proofAA} for which we will use \eq{tailbound1}. We consider the change of variable
$u=\frac{8R^2}{\mu n}
\Big[
 {10}{\sqrt{t}} + 20 \square t
 +  \triangle \Big]^2$, for which $u \in$ \linebreak[4] $\Big[
 \triangle^2 \frac{8R^2}{\mu n },  \frac{8R^2}{ {\mu n}}\big( {
 \frac{\mu \sqrt{n}}{4R^2} +  \triangle } \big)^2
 \Big]$ implies $t \in [ 0 , +\infty)$. This implies that
\BEAS
  & & \int_{\triangle^2 \frac{8R^2}{ {\mu n}} }^{\frac{8R^2}{ {\mu n}}\big( {
 \frac{\mu \sqrt{n}}{4R^2} +  \triangle } \big)^2}
\P \big[ 
f(\bar{\theta}_n) - f(\theta_\ast) \geqslant u
\big] du   \\
& \leqslant & \int_{0}^\infty 4e^{-t} d \bigg(
 \frac{8R^2}{\mu n}
\bigg[
 {10}{\sqrt{t}} + 20 \square t
 +  \triangle \bigg]^2 
\bigg)  \\
& = &  \frac{32 R^2}{\mu n}  \int_{0}^\infty e^{-t}   \bigg(
 100 + 400 \square^2 2t   + 400  \square \frac{3}{2} t^{1/2} + 20 \triangle \frac{1}{2} t^{-1/2} + 40   \triangle \square \bigg) dt
 \\
 & = &   \frac{32 R^2}{\mu n}   \bigg(
 100 \Gamma(1) + 400 \square^2 2  \Gamma(2)    + 400  \square \frac{3}{2}  \Gamma(3/2)  + 20 \triangle \frac{1}{2}\Gamma(1/2) + 40   \triangle \square \Gamma(1) \bigg)  \\
 & & \mbox{ with }  \Gamma \mbox{ denoting the Gamma function,} \\
 & = & \frac{32 R^2}{\mu n}   \bigg(
 100   + 400 \square^2 2       + 400  \square \frac{3}{2}  \frac{1}{2} \sqrt{\pi} + 20 \triangle \frac{1}{2}\sqrt{\pi} + 40   \triangle \square \bigg) .
\EEAS

We may now combine the three bounds to get, from \eq{proofAA},
\BEAS
\E \big[ f(\bar{\theta}_n) \!-\! f(\theta_\ast) \big]
& \!\!\!\! \!\!\!\!\leqslant \!\!\!\!\!\! \!\!&
 \triangle^2 \frac{8 R^2}{n\mu}  +  \frac{2400 \gamma^2 R^6 }{\mu }\\
 & & + 
\frac{32 R^2}{\mu n}   \bigg(
 100   + 400 \square^2 2       + 400  \square \frac{3}{2}  \frac{1}{2} \sqrt{\pi} + 20 \triangle \frac{1}{2}\sqrt{\pi} + 40   \triangle \square \bigg) 
 \\
 & \!\!\!\! \!\!\!\!\leqslant \!\!\!\!\!\!\!\! & \frac{32 R^2}{n \mu} \bigg[ \frac{\triangle^2}{4} \! +\! 
  {75 \gamma^2 R^4 n  } \!+\!
  100   \!+ \!800 \square^2       \!  +\! 300  \square  \sqrt{\pi} \!+\! 10 \triangle  \sqrt{\pi} \!+\! 40   \triangle \square
 \bigg].
\EEAS

For $\gamma = \frac{1}{2 R^2 \sqrt{N}}$,
 with $\alpha = R \| \theta_0 - \theta_\ast\|$, $\square = 1$ and  $\triangle = 6 \alpha^2 + 6 \alpha$, we obtain 
\BEAS
\E \big[ f(\bar{\theta}_N) \!-\! f(\theta_\ast) \big]
& \!\!\!\!\leqslant \!\!\!\! &    \frac{32 R^2}{N\mu}
\bigg[
\frac{1}{4}\triangle^2 + 1451 + 58 \Delta \bigg] \\
& \!\!\!\!\leqslant \!\!\!\!&    \frac{32 R^2}{N\mu}
\bigg[
9 \alpha^4 + 18 \alpha^3 + 9 \alpha^2 + 1451 + 348 \alpha^2 + 348 \alpha \bigg] \\
& \!\!\!\!\leqslant \!\!\!\!& 
\frac{  R^2}{N\mu} \big(
625 \alpha^4 + 7500 \alpha^3 + 33750 \alpha^2 + 67500 \alpha + 50625
\big)
\!=\! \frac{  R^2}{N\mu}
\big(5 \alpha + 15 \big)^4
.\EEAS

Note that the previous bound is only valid if   $\frac{3}{ \gamma  \sqrt{n}}
\|\theta_0 - \theta_\ast\|^2   +
   \frac{3}{\gamma R \sqrt{n}} \| \theta_0 - \theta_\ast\|
 \leqslant      \frac{\mu \sqrt{n}}{8R^2}$, that is, under the condition $ 6 R^2 \| \theta_0 - \theta_\ast\|^2 +6 R \| \theta_0 - \theta_\ast\| \leqslant  \frac{\mu \sqrt{N}}{8R^2}$. If the condition  is not satisfied, then the bound is still valid because of Lemma~\ref{prop:old}. We thus obtain 
 the desired result.

\subsection{Bound on Iterates}
Following the same principle as for function values in Appendix~\ref{app:boundF}, we consider the same event~$A_t$. With the same condition on $\gamma$ and $t$, we have:
 $$
  A_t \subset   \bigg\{
   \| \bar{\theta}_n - \theta_\ast \|^2
    \leqslant 
 \frac{16 R^2}{\mu^2 n}
\bigg[
 {10}{\sqrt{t}} + 20 \square t
 +  \triangle \bigg]^2 \bigg\} ,
$$
which leads to the tail bound:
$$
   \P \bigg(
        \| \bar{\theta}_n - \theta_\ast \|^2
  \geqslant \frac{16 R^2}{\mu^2 n}
\bigg[
 {10}{\sqrt{t}} + 20 \square t
 +  \triangle \bigg]^2  
 \bigg) \leqslant 4 e^{-t}.
 $$
 We may now split the expectation in three integrals:
 \BEA
  \E \| \bar{\theta}_n - \theta_\ast \|^2 
\label{eq:proofBB}
& = & \int_{0}^{\frac{16R^2}{ {\mu^2 n}} \triangle  ^2}
\P \big[ 
\| \bar{\theta}_n - \theta_\ast \|^2  \geqslant u
\big] du \\
\nonumber&&  +  
 \int_{\frac{16R^2}{ {\mu^2 n}} \triangle  ^2}^{\frac{16R^2}{ {\mu^2 n}}\big( {
 \frac{\mu \sqrt{n}}{4R^2} +  \triangle } \big)^2} 
\P \big[ 
\| \bar{\theta}_n - \theta_\ast \|^2  \geqslant u
\big] du \\
\nonumber&& + \int_{\frac{16R^2}{ {\mu^2 n}}\big( {
 \frac{\mu \sqrt{n}}{4R^2} +  \triangle } \big)^2}^\infty 
\P \big[ 
\| \bar{\theta}_n - \theta_\ast \|^2  \geqslant u
\big] du.
\EEA
The first term in \eq{proofBB} is simply bounded by bounding the tail bound by one (like in the previous section):
$\displaystyle 
 \int_{0}^{\frac{16R^2}{ {\mu^2 n}} \triangle  ^2}
\P \big[ 
\| \bar{\theta}_n - \theta_\ast \|^2  \geqslant u
\big] du
\leqslant \frac{16R^2}{ {\mu^2 n}} \triangle  ^2 $. The last integral in \eq{proofBB} may be bounded as follows:
\BEAS
& & \int_{\frac{16R^2}{ {\mu^2 n}}\big( {
 \frac{\mu \sqrt{n}}{4R^2} +  \triangle } \big)^2}^\infty 
\P \big[ 
\| \bar{\theta}_n - \theta_\ast \|^2  \geqslant u
\big] du
\\
& = & \E \bigg[
1_{\| \bar{\theta}_n - \theta_\ast \|^2  \geqslant \frac{16R^2} {\mu^2 n}\big( {
 \frac{\mu \sqrt{n}}{4R^2} +  \triangle } \big)^2} \| \bar{\theta}_n - \theta_\ast\|^2
\bigg] \\
& \leqslant &  \P \bigg[ \| \bar{\theta}_n - \theta_\ast \|^2  \geqslant \frac{16R^2} {\mu^2 n}\big( {
 \frac{\mu \sqrt{n}}{4R^2} +  \triangle } \big)^2\bigg]^{1/2} \bigg[\E \big( \| \bar{\theta}_n - \theta_\ast\|^4 \big)
\bigg]^{1/2}
\\
& & \mbox{ using Cauchy-Schwarz inequality, } \\
& \leqslant & \P \bigg[ 
\| \bar{\theta}_n - \theta_\ast \|^2  \geqslant \frac{16R^2} {\mu^2 n}\big( {
 \frac{\mu \sqrt{n}}{4R^2} +  \triangle } \big)^2
\bigg]^{1/2}   \bigg(  \|\theta_0 - \theta_\ast\|^2 +  9 \gamma^2 n R^2\bigg)
\mbox{ using Proposition~\ref{prop:boundp}.}
\EEAS
Moreover, if we denote by $t_0$ the largest solution of ${10}{\sqrt{t_0}} + 20 \square t_0    =         \frac{\mu \sqrt{n}}{4R^2}  $, we have:
\BEAS
\sqrt{t_0} & = & \frac{- 10 + \sqrt{ 100 + 20  \square  \frac{\mu \sqrt{n}}{ R}}}{40 \square}
 =  \frac{- 10 + 1 0 \sqrt{ 1 + 20  \square \frac{\mu \sqrt{n}}{100 R}}}{40 \square} \\
 & \geqslant & \frac{9}{40 \square}  \sqrt{ 20  \square \frac{\mu \sqrt{n}}{100 R}},
 \EEAS
as soon as $20  \square \frac{\mu \sqrt{n}}{100 R} \geqslant 100$, 
since if $q\geqslant 100$, $-1 + \sqrt{1+q} \leqslant \frac{9}{10} \sqrt{q}$. This implies that
\BEAS
& & \int_{\frac{16R^2}{ {\mu^2 n}}\big( {
 \frac{\mu \sqrt{n}}{4R^2} +  \triangle } \big)^2}^\infty 
\P \big[ 
\| \bar{\theta}_n - \theta_\ast \|^2  \geqslant u
\big] du
\\
& \leqslant & 
  \bigg[ 4 \exp(-t_0)
\bigg]^{1/2}   \bigg(  \|\theta_0 - \theta_\ast\|^2 +  9 \gamma^2 n R^2\bigg)
\\
& \leqslant & \frac{9}{2 t_0^2}  \bigg(  \|\theta_0 - \theta_\ast\|^2 +  9 \gamma^2 n R^2\bigg)
\mbox{ using } \exp(-\alpha) \leqslant \frac{9}{ 16 \alpha^2} \mbox{ for all } \alpha>0,\\
& \leqslant & \frac{9}{2} \frac{ 40^4 \square^4 100^2 R^4}{9^4  20^2 \square^2 \mu^2 n }
  \bigg[ \frac{9}{4} \square^2 / R^2 + \frac{\gamma \sqrt{n} }{3} \triangle   \bigg]\\
  & \leqslant & 
   686 \times 64 \frac{ \square^2R^2}{ \mu^2 n }
  \bigg[ \frac{9}{4} \square^2 + \frac{1}{6} \square \triangle   \bigg].
\EEAS
The second term in \eq{proofBB} is bounded exactly like in Appendix~\ref{app:boundF}, leading to:
 \BEAS
& & 
\int_{\triangle^2 \frac{16R^2}{ {\mu^2 n}} }^{\frac{16R^2}{ {\mu^2 n}}\big( {
 \frac{\mu \sqrt{n}}{4R^2} +  \triangle } \big)^2}
\P \big[ 
\| \bar{\theta}_n - \theta_\ast \|^2  \geqslant u
\big] du\\
& \leqslant &
 \int_{0}^\infty 4e^{-t} d \bigg(
 \frac{16R^2}{\mu^2 n}
\bigg[
 {10}{\sqrt{t}} + 20 \square t
 +  \triangle \bigg]^2 
\bigg)  \\
 & \leqslant &   
  \frac{64 R^2}{\mu^2 n}  \int_{0}^\infty e^{-t}   \bigg(
 100 + 400 \square^2 2t   + 400  \square \frac{3}{2} t^{1/2} + 20 \triangle \frac{1}{2} t^{-1/2} + 40   \triangle \square \bigg) dt
 \\
   & \leqslant &  
  \frac{64 R^2}{\mu^2 n}    \bigg(
 100 \Gamma(1) + 400 \square^2 2  \Gamma(2)    + 400  \square \frac{3}{2}  \Gamma(3/2)  + 20 \triangle \frac{1}{2}\Gamma(1/2) + 40   \triangle \square \Gamma(1) \bigg) dt
  \\
& \leqslant &   
  \frac{64 R^2}{\mu^2 n}  \bigg(
 100   + 400 \square^2 2       + 400  \square \frac{3}{2}  \frac{1}{2} \sqrt{\pi} + 20 \triangle \frac{1}{2}\sqrt{\pi} + 40   \triangle \square \bigg)  .
  \EEAS
  We can now put all elements together to obtain, from \eq{proofBB}:
   \BEAS
& & \E \| \bar{\theta}_n - \theta_\ast \|^2 \\
& \leqslant &  \frac{64 R^2}{\mu^2 n}  \bigg(
 100   + 400 \square^2 2       + 400  \square \frac{3}{2}  \frac{1}{2} \sqrt{\pi} + 20 \triangle \frac{1}{2}\sqrt{\pi} + 40   \triangle \square \bigg)  \\
 & & \hspace*{6cm} 
 + \frac{16R^2}{ {\mu^2 n}} \triangle  ^2 +  686 \times 64 \frac{ \square^2R^2}{ \mu^2 n }
  \bigg[ \frac{9}{4} \square^2 + \frac{1}{6} \square \triangle   \bigg] \\
  & \leqslant & 
    \frac{64 R^2}{n\mu^2}
\bigg[
\frac{1}{4}\triangle^2 + 100 + 800 \square^2 + 532 \square + 32  {\triangle}  + 40  {\triangle} \square + 686 \frac{9}{4} \square^4 + 686 \frac{\triangle \square^3}{6}
\bigg].
\EEAS
For $\gamma = \frac{1}{2R^2 \sqrt{N}}$, with $\alpha = R \| \theta_0 - \theta_\ast\|$, $\square = 1$ and  $\triangle = 6 \alpha^2 + 6 \alpha$, we get
\BEAS
\E \|\bar{\theta}_N - \theta_\ast\|^2 
&\!\!\!\! \leqslant \!\!\!\!& 
 \frac{  8R^2}{N\mu^2}
 \bigg[
 2\triangle^2 + 8 \triangle ( 32+ 40 +115) + 8 ( 100+800+532+1544)
 \bigg]
\\
& \!\!\!\!\leqslant \!\!\!\!& 
 \frac{  8R^2}{N\mu^2}
 \bigg[
 2\triangle^2 + 1496 \triangle + 23808
 \bigg] 
\\
& \!\!\!\!\leqslant \!\!\!\!& 
 \frac{  8R^2}{N\mu^2}
 \bigg[
72 \alpha^4 + 144 \alpha^3 + 72 \alpha^2 + 1496 \times 6 \alpha^2 + 1496 \times 6 \alpha   + 23808
 \bigg] \\
  & \!\!\!\!\leqslant \!\!\!\!& 
  \frac{  R^2}{N\mu^2}
 \bigg[
1296  \alpha^4 + 18144 \alpha^3 + 95256 \alpha^2 + 222264 \alpha   + 194481
 \bigg] 
\! = \!
 \frac{  R^2}{N\mu^2}
 \big( 6 \alpha + 21 \big)^4.
\EEAS

The previous bound is valid as long as $ \frac{\mu \sqrt{N}}{R} \geqslant \frac{10000}{20}=500$. If it is not satisfied, then Lemma~\ref{prop:old} shows that it is still valid.

\vskip 0.2in
\bibliography{bach14a}

\def\cprime{$'$}
\begin{thebibliography}{38}
\providecommand{\natexlab}[1]{#1}
\providecommand{\url}[1]{\texttt{#1}}
\expandafter\ifx\csname urlstyle\endcsname\relax
  \providecommand{\doi}[1]{doi: #1}\else
  \providecommand{\doi}{doi: \begingroup \urlstyle{rm}\Url}\fi

\bibitem[Agarwal et~al.(2012)Agarwal, Bartlett, Ravikumar, and
  Wainwright]{agarwal2010information}
A.~Agarwal, P.~L. Bartlett, P.~Ravikumar, and M.~J. Wainwright.
\newblock Information-theoretic lower bounds on the oracle complexity of
  stochastic convex optimization.
\newblock \emph{IEEE Transactions on Information Theory}, 58\penalty0
  (5):\penalty0 3235--3249, 2012.

\bibitem[Bach(2010)]{bach2010self}
F.~Bach.
\newblock {Self-concordant analysis for logistic regression}.
\newblock \emph{Electronic Journal of Statistics}, 4:\penalty0 384--414, 2010.

\bibitem[Bach and Moulines(2011)]{gradsto}
F.~Bach and E.~Moulines.
\newblock Non-asymptotic analysis of stochastic approximation algorithms for
  machine learning.
\newblock In \emph{Advances in Neural Information Processing Systems (NIPS)},
  2011.

\bibitem[Bach and Moulines(2013)]{newsto}
F.~Bach and E.~Moulines.
\newblock Non-strongly-convex smooth stochastic approximation with convergence
  rate ${O}(1/n)$.
\newblock In \emph{Advances in Neural Information Processing Systems (NIPS)},
  2013.

\bibitem[Bordes et~al.(2005)Bordes, Ertekin, Weston, and
  Bottou]{bordes2005fast}
A.~Bordes, S.~Ertekin, J.~Weston, and L.~Bottou.
\newblock Fast kernel classifiers with online and active learning.
\newblock \emph{Journal of Machine Learning Research}, 6:\penalty0 1579--1619,
  2005.

\bibitem[Bottou and Bousquet(2008)]{bottou-bousquet-2008b}
L.~Bottou and O.~Bousquet.
\newblock The tradeoffs of large scale learning.
\newblock In \emph{Advances in Neural Information Processing Systems (NIPS)},
  2008.

\bibitem[Bottou and Le~Cun(2005)]{bottou2005line}
L.~Bottou and Y.~Le~Cun.
\newblock On-line learning for very large data sets.
\newblock \emph{Applied Stochastic Models in Business and Industry},
  21\penalty0 (2):\penalty0 137--151, 2005.

\bibitem[Boucheron et~al.(2013)Boucheron, Lugosi, and
  Massart]{boucheron2013concentration}
S.~Boucheron, G.~Lugosi, and P.~Massart.
\newblock \emph{Concentration Inequalities: A Nonasymptotic Theory of
  Independence}.
\newblock Oxford University Press, 2013.

\bibitem[Broadie et~al.(2009)Broadie, Cicek, and Zeevi]{broadie2009general}
M.~N. Broadie, D.~M. Cicek, and A.~Zeevi.
\newblock General bounds and finite-time improvement for stochastic
  approximation algorithms.
\newblock Technical report, Columbia University, 2009.

\bibitem[Duchi and Singer(2009)]{duchi}
J.~Duchi and Y.~Singer.
\newblock Efficient online and batch learning using forward backward splitting.
\newblock \emph{Journal of Machine Learning Research}, 10:\penalty0 2899--2934,
  2009.

\bibitem[Fabian(1968)]{fabian1968asymptotic}
V.~Fabian.
\newblock On asymptotic normality in stochastic approximation.
\newblock \emph{The Annals of Mathematical Statistics}, 39\penalty0
  (4):\penalty0 1327--1332, 1968.

\bibitem[Freedman(1975)]{freedman}
D.~A. Freedman.
\newblock On tail probabilities for martingales.
\newblock \emph{Annals of Probability}, 3\penalty0 (1):\penalty0 100--118,
  1975.

\bibitem[Hazan and Kale(2001)]{hazanbeyond}
E.~Hazan and S.~Kale.
\newblock Beyond the regret minimization barrier: an optimal algorithm for
  stochastic strongly-convex optimization.
\newblock In \emph{Proceedings of the Conference on Learning Theory (COLT)},
  2001.

\bibitem[Juditsky and Nesterov(2010)]{juditsky2010primal}
A.~Juditsky and Y.~Nesterov.
\newblock Primal-dual subgradient methods for minimizing uniformly convex
  functions.
\newblock Technical Report 00508933, HAL, 2010.

\bibitem[Kakade and Tewari(2009)]{kakade2009generalization}
S.~M. Kakade and A.~Tewari.
\newblock On the generalization ability of online strongly convex programming
  algorithms.
\newblock In \emph{Advances in Neural Information Processing Systems (NIPS)},
  2009.

\bibitem[Kushner and Yin(2003)]{kushner:yin:2003}
H.~J. Kushner and G.~G. Yin.
\newblock \emph{Stochastic approximation and recursive algorithms and
  applications}.
\newblock Springer-Verlag, second edition, 2003.

\bibitem[Lacoste-Julien et~al.(2012)Lacoste-Julien, Schmidt, and
  Bach]{lacoste2012simpler}
S.~Lacoste-Julien, M.~Schmidt, and F.~Bach.
\newblock A simpler approach to obtaining an ${O}(1/t)$ convergence rate for
  projected stochastic subgradient descent.
\newblock Technical Report 1212.2002, ArXiv, 2012.

\bibitem[Lafferty et~al.(2001)Lafferty, McCallum, and
  Pereira]{lafferty2001conditional}
J.~Lafferty, A.~McCallum, and F.~Pereira.
\newblock Conditional random fields: Probabilistic models for segmenting and
  labeling sequence data.
\newblock In \emph{Proceedings of the International Conference on Machine
  Learning (ICML)}, 2001.

\bibitem[Lan(2012)]{lan2010optimal}
G.~Lan.
\newblock An optimal method for stochastic composite optimization.
\newblock \emph{Mathematical Programming}, 133\penalty0 (1-2):\penalty0
  365--397, 2012.

\bibitem[{Le Roux} et~al.(2012){Le Roux}, Schmidt, and Bach]{sag}
N.~{Le Roux}, M.~Schmidt, and F.~Bach.
\newblock A stochastic gradient method with an exponential convergence rate for
  strongly-convex optimization with finite training sets.
\newblock In \emph{Advances in Neural Information Processing Systems (NIPS)},
  2012.

\bibitem[McMahan and Streeter(2012)]{open}
H.~B. McMahan and M.~Streeter.
\newblock Open problem: Better bounds for online logistic regression.
\newblock In \emph{COLT/ICML Joint Open Problem Session}, 2012.

\bibitem[Nemirovski et~al.(2009)Nemirovski, Juditsky, Lan, and
  Shapiro]{nemirovski2009robust}
A.~Nemirovski, A.~Juditsky, G.~Lan, and A.~Shapiro.
\newblock Robust stochastic approximation approach to stochastic programming.
\newblock \emph{SIAM Journal on Optimization}, 19\penalty0 (4):\penalty0
  1574--1609, 2009.

\bibitem[Nemirovski and Yudin(1983)]{nemirovsky1983problem}
A.~S. Nemirovski and D.~B. Yudin.
\newblock \emph{Problem complexity and method efficiency in optimization.}
\newblock Wiley \& Sons, 1983.

\bibitem[Nesterov(2004)]{nesterov2004introductory}
Y.~Nesterov.
\newblock \emph{Introductory lectures on convex optimization: a basic course}.
\newblock Kluwer Academic Publishers, 2004.

\bibitem[Nesterov(2009)]{nesterov2009primal}
Y.~Nesterov.
\newblock Primal-dual subgradient methods for convex problems.
\newblock \emph{Mathematical Programming}, 120\penalty0 (1):\penalty0 221--259,
  2009.

\bibitem[Nesterov and Nemirovskii(1994)]{self}
Y.~Nesterov and A.~Nemirovskii.
\newblock \emph{Interior-point polynomial algorithms in convex programming}.
\newblock SIAM studies in Applied Mathematics, 1994.

\bibitem[Nesterov and Vial(2008)]{nesterov2008confidence}
Y.~Nesterov and J.~P. Vial.
\newblock Confidence level solutions for stochastic programming.
\newblock \emph{Automatica}, 44\penalty0 (6):\penalty0 1559--1568, 2008.

\bibitem[Pinelis(1994)]{pinelis}
I.~Pinelis.
\newblock Optimum bounds for the distributions of martingales in banach spaces.
\newblock \emph{The Annals of Probability}, 22\penalty0 (4):\penalty0
  1679--1706, 1994.

\bibitem[Polyak and Juditsky(1992)]{polyak1992acceleration}
B.~T. Polyak and A.~B. Juditsky.
\newblock Acceleration of stochastic approximation by averaging.
\newblock \emph{SIAM Journal on Control and Optimization}, 30\penalty0
  (4):\penalty0 838--855, 1992.

\bibitem[Ruppert(1988)]{ruppert}
D.~Ruppert.
\newblock Efficient estimations from a slowly convergent {R}obbins-{M}onro
  process.
\newblock Technical Report 781, Cornell University Operations Research and
  Industrial Engineering, 1988.

\bibitem[Sch{\"o}lkopf and Smola(2001)]{scholkopf-smola-book}
B.~Sch{\"o}lkopf and A.~J. Smola.
\newblock \emph{Learning with Kernels}.
\newblock MIT Press, 2001.

\bibitem[Shalev-Shwartz and Srebro(2008)]{shalev2008svm}
S.~Shalev-Shwartz and N.~Srebro.
\newblock {SVM} optimization: inverse dependence on training set size.
\newblock In \emph{Proceedings of the International Conference on Machine
  Learning (ICML)}, 2008.

\bibitem[Shalev-Shwartz et~al.(2007)Shalev-Shwartz, Singer, and
  Srebro]{shalev2007pegasos}
S.~Shalev-Shwartz, Y.~Singer, and N.~Srebro.
\newblock Pegasos: Primal estimated sub-gradient solver for svm.
\newblock In \emph{Proceedings of the International Conference on Machine
  Learning (ICML)}, 2007.

\bibitem[Shalev-Shwartz et~al.(2009)Shalev-Shwartz, Shamir, Srebro, and
  Sridharan]{shalev2009stochastic}
S.~Shalev-Shwartz, O.~Shamir, N.~Srebro, and K.~Sridharan.
\newblock Stochastic convex optimization.
\newblock In \emph{Proceedings of the Conference on Learning Theory (COLT)},
  2009.

\bibitem[Shawe-Taylor and Cristianini(2004)]{Cristianini2004}
J.~Shawe-Taylor and N.~Cristianini.
\newblock \emph{Kernel Methods for Pattern Analysis}.
\newblock Cambridge University Press, 2004.

\bibitem[{Van der Vaart}(1998)]{VanDerVaart}
A.~W. {Van der Vaart}.
\newblock \emph{Asymptotic Statistics}.
\newblock Cambridge Univ. Press, 1998.

\bibitem[Wang et~al.(2012)Wang, Crammer, and Vucetic]{wang2012breaking}
Z.~Wang, K.~Crammer, and S.~Vucetic.
\newblock Breaking the curse of kernelization: Budgeted stochastic gradient
  descent for large-scale {SVM} training.
\newblock \emph{Journal of Machine Learning Research}, 13:\penalty0 3103--3131,
  2012.

\bibitem[Xiao(2010)]{xiao2010dual}
L.~Xiao.
\newblock Dual averaging methods for regularized stochastic learning and online
  optimization.
\newblock \emph{Journal of Machine Learning Research}, 9:\penalty0 2543--2596,
  2010.

\end{thebibliography}

       \end{document}